% !TeX spellcheck = en_US
\documentclass{amsart}

\usepackage{amssymb}
\usepackage{amsbsy}
\usepackage{amscd}
\usepackage{amsmath}
\usepackage{amsthm}
\usepackage{amsxtra}
\usepackage{mathtools}
\usepackage[new]{old-arrows} 

\usepackage{tikz}
\usetikzlibrary{cd}

\usepackage{pict2e}

\usepackage[mathscr]{eucal}
\usepackage{mathrsfs}
\usepackage{upgreek}
\usepackage{wasysym}
\usepackage[all,cmtip,dvipdfmx]{xy}
\usepackage{xcolor}
\definecolor{rouge}{rgb}{0.85,0.1,.4}
\definecolor{bleu}{rgb}{0.1,0.2,0.9}
\definecolor{violet}{rgb}{0.7,0,0.8}

\usepackage[colorlinks=true,linkcolor=bleu,urlcolor=violet,citecolor=rouge]{hyperref}

\usepackage{lmodern}

\usepackage{tdsfrmath}		
\usepackage{systeme}		
\usepackage{siunitx}	
\usepackage[shortlabels]{enumitem} 
\usepackage{xspace}			
\usepackage{xifthen} 	

%font packages
\usepackage{dsfont}
\usepackage{stmaryrd}

\usepackage{graphicx}		
\usepackage{csquotes}		
\usepackage{varioref}	
\usepackage{caption}
\usepackage{subcaption}		

\allowdisplaybreaks[1] 		

\newcommand{\mc}{\mathcal}
\newcommand{\mf}{\mathfrak}

\renewcommand{\leq}{\leqslant}
\renewcommand{\geq}{\geqslant}

\newcommand{\bra}{{\langle}}
\newcommand{\ket}{{\rangle}}
\newcommand{\lam}{\lambda}
\newcommand{\Lam}{\Lambda}
\renewcommand{\Re}{{\rm Re}}

% ensembles

\newcommand{\Vir}{{\rm Vir}}
\newcommand{\W}{\mc{W}}

\newcommand{\g}{\mf{g}}
\newcommand{\h}{\mf{h}}
\newcommand{\n}{\mf{n}}
\newcommand{\affg}{\widehat{\mf{g}}}
\newcommand{\affh}{\widehat{\mf{h}}}
\newcommand{\extg}{\widetilde{\mf{g}}}
\newcommand{\exth}{\widetilde{\mf{h}}}
\renewcommand{\sl}[1]{\mf{sl}_{#1}}
\newcommand{\so}[1]{\mf{so}_{#1}}
\renewcommand{\sp}[1]{\mf{sp}_{#1}}
\newcommand{\VA}{\ensuremath{V^{k}(\g)}}
\newcommand{\SVA}{\ensuremath{V_{k}(\g)}}

\newcommand{\WA}{\ensuremath{\W^k(\g,f)}}
\newcommand{\SWA}{\ensuremath{\W_k(\g,f)}}

\renewcommand{\O}{\mathbb{O}}
\newcommand{\Clifford}{\ensuremath{\widehat{Cl}}}

\newcommand{\Fock}{\mc{F}}
\newcommand{\ens}[2]{\ifthenelse{\equal{#2}{pp}}{#1_{>0}}{\ifthenelse{\equal{#2}{p}}{#1_{\geq0}}{\ifthenelse{\equal{#2}{nn}}{#1_{<0}}{\ifthenelse{\equal{#2}{n}}{#1_{\leq0}}{#1_{#2}}}}}}

\newcommand{\vac}{{|0\rangle}}

\newcommand{\indic}[1]{\delta_{#1}}
\newcommand{\norm}[1]{\lvert#1\rvert}
\newcommand{\IP}[2]{(#1|#2)}

\newcommand{\NO}[1]{\ensuremath{:#1:}}
\newcommand{\OPE}[3]{\ensuremath{\frac{#2}{#3(z-w)^{#1}}}}
\renewcommand{\gcd}[2]{\ensuremath{(#1,#2)}}
\newcommand{\+}{\mathop{\oplus}}
\renewcommand{\*}{\otimes}
\newcommand{\Exterior}{\mathchoice{{\textstyle\bigwedge}}%
	{{\bigwedge}}%
	{{\textstyle\wedge}}%
	{{\scriptstyle\wedge}}}

%math operators

\DeclareMathOperator{\Ima}{im}

\DeclareMathOperator{\id}{Id}

\DeclareMathOperator{\tr}{tr}

\DeclareMathOperator{\ch}{ch}
\DeclareMathOperator{\Zhu}{{\rm Zhu}}

\DeclareMathOperator{\Ext}{Ext}
\DeclareMathOperator{\Mod}{Mod}
\DeclareMathOperator{\killing}{\kappa_{\g}}

\DeclareMathOperator{\e}{e}

\newcommand{\cprime}{$'$}

%commentaires

\theoremstyle{theorem}
\newtheorem{Theorem}{Theorem}[section]

\theoremstyle{nonumberplain}
\newtheorem{MainTheoremNB}{Main~Theorem}

\theoremstyle{plain}
\newtheorem{Proposition}[Theorem]{Proposition}

\newtheorem{Lemma}[Theorem]{Lemma}

\newtheorem{Corollary}[Theorem]{Corollary}

\newtheorem{Conjecture}[Theorem]{Conjecture}

\theoremstyle{remark}

\newtheorem{Example}[Theorem]{Example}

\newtheorem{Remark}[Theorem]{Remark}

\usepackage{hyperref}

\newcommand{\charge}{c_k}
\newcommand{\vectop}{| \xi,\chi \rangle}
\newcommand{\vacij}[2]{|\xi^{(#2)}_{#1},\chi^{(#2)}_{#1}\rangle}
\renewcommand{\top}[1]{#1_{\text{top}}}
\newcommand{\Lxichi}{L(\xi,\chi)}
\newcommand{\Lxichiprime}{L(\xi',\chi')}
\newcommand{\Lxichiij}[2]{L(\xi^{(#2)}_{#1},\chi^{(#2)}_{#1})}

\title[Rationality of the exceptional $\W$-algebras $\W_k(\sp{4},f_{subreg})$]{\small    
Rationality of the exceptional $\W$-algebras $\W_k(\sp{4},f_{subreg})$ 
associated with subregular nilpotent elements of $\sp{4}$}
\author{Justine Fasquel}
\address{Univ. Lille, CNRS, UMR 8524 - Laboratoire Paul Painleve, F-59000 Lille, France}
\email{justine.fasquel@univ-lille.fr}

\begin{document}
\maketitle

\begin{abstract}
We prove the rationality of the exceptional $\W$-algebras $\SWA$ 
associated with the simple Lie algebra $\g=\sp{4}$ and a subregular nilpotent element $f=f_{subreg}$ 
of $\sp{4}$, proving a new particular case of a conjecture of Kac-Wakimoto. Moreover, we describe the simple $\SWA$-modules and compute their characters. We also explicit the nontrivial action of the component group on the set of simple $\SWA$-modules.
\end{abstract}

\section{Introduction}
Let $\g$ be a finite dimensional simple Lie algebra, $f$ a nilpotent element of $\g$ and $k\in\C$ a complex number. 
The universal affine $\W$-algebra $\WA$ associated with $(\g,f)$ is a certain vertex algebra obtained from the quantized Drinfeld-Sokolov reduction of the universal affine vertex algebra $\VA$. 
The $\W$-algebras can be regarded as affinizations of {\em finite $\W$-algebras} (introduced by Premet \cite{Pre02}), and can also be considered as generalizations of affine vertex algebras and Virasoro vertex algebras. 
The construction of $\W$-algebras was firstly introduced by Feigin and Frenkel \cite{FF90} for $f$ a principal nilpotent element, and was extended for general nilpotent elements by Kac, Roan and Wakimoto \cite{KRW03}.
The theory of $\W$-algebras is related with integrable systems \cite{Kac17}, the two-dimensional conformal field theory, the geometric Langlands program \cite{Gai16,Fre07,AF19}, and the 4d/2d duality \cite{Ara18,BLL15,BPR15,SXY17} in physics. 
 
The nicest (conformal) vertex algebras are those which are both rational and lisse. The rationality means the completely reducibility of $\Z_{\geq 0}$-graded modules. The lisse condition is equivalent to the fact that the {\em associated variety} has dimension~0.  
If a vertex algebra $V$ is rational and lisse, then it gives rise to a rational conformal field theory. 
The rationality condition implies that $V$ has finitely many simple $\Z_{\geq 0}$-graded modules and that the graded components of each of these $\Z_{\geq 0}$-graded modules are finite dimensional \cite{DLM98}.
In fact lisse vertex algebras also verify this property \cite{ABD04,MP95,Zhu96}.
It is actually conjectured by Zhu \cite{Zhu96} that rational vertex algebras must be lisse (this conjecture is still open). 

It is known \cite{FZ92} that the simple quotient $L_k(\g)$ of $\VA$ is rational (and lisse) if and only if it is integrable as a representation of $\affg$, that is, $k \in \Z_{\geq 0}$, where $\affg$ is the affine Kac-Moody algebra associated with $\g$. 
It is also known \cite{Wan93} that the simple quotient $\Vir_c$ of the Virasoro vertex algebra $\Vir^c$ is rational (and lisse) if and only if $c=c(p,q)$, where \[c(p,q)= 1 - \frac{6(p -q)^2}{pq}, \qquad p, q > 1, \; \gcd{p}{q}= 1.\]
More difficult is the study of the rationality of the simple quotient $\SWA$ of $\WA$. 

It was conjectured by Frenkel, Kac and Wakimoto \cite{FKW92}, and proved by Arakawa \cite{Ara15b}, that if $k=-h^\vee+p/q \in \mathbb{Q}$ 
is {\em non-degenerate admissible}\footnote{The level $k$ is {\em admissible} if $L_k(\g)$ is {\em admissible} as a representation of $\widehat{\g}$,  cf.~Sect.~\ref{sec:affine_vertex_algebras} for a precise definition. 
It is {\em non-degenerate} if the associated variety of 
$L_k(\g)$ is the whole nilpotent cone of $\g$.} for $\g$ and if $f=f_{princ}$ is a principal nilpotent element, then $\W_k(\g,f_{princ})$ is rational.  
Here, $h^\vee$ is the dual Coxeter number of $\g$.
More generally, Kac and Wakimoto \cite{KW08} conjectured that $\SWA$ is rational whenever $k=-h^\vee+p/q$ is admissible and $(f,q)$ is an \emph{exceptional pair} \cite{KW08}. 

It was shown by Arakawa \cite{Ara15a} that, for $k=-h^\vee+p/q$ an admissible level, the associated variety of the simple affine vertex algebra $L_{k}(\g)$ is precisely the closure of some nilpotent orbit $\O_q$ which only depends on the denominator $q$.
It is also proved in \cite{Ara15a} that if $f \in \O_q$ then $\SWA$ is lisse. 
Following \cite{AvE19}, we extend the notion of exceptional pair and say that the pair $(f,q)$ is {\em exceptional}\footnote{The notion of exceptional pair coincides with  \cite{EKV09} if $f$ is of {\em Levi type}, and with \cite{KW08}  if moreover $(q,r^\vee)=1$, where $r^\vee$ is the lacity of $\g$.} if $f \in \O_q$, with $q \in\Z$, $q>1$. 
Thus, for $k=-h^\vee+p/q$ an admissible level, the pair $(f,q)$ is exceptional only if $\SWA$ is lisse. 
It was conjectured in \cite{Ara15a} that $\SWA$ is rational whenever $k$ is admissible and $(f,q)$ is an exceptional pair, now in the broader sense. 

Arakawa and van Ekeren recently gave strong evidences of this conjecture by showing that the exceptional $\W$-algebra $\SWA$ is rational for a large class of exceptional pairs $(f,q)$. 
In particular, they proved the conjecture for exceptional $\W$-algebras in type $A$ (where all notions of exceptional pairs coincide) and for exceptional $\W$-algebras associated with a subregular nilpotent element $f$ when $\g$ is simply-laced. 
A particular case of this result was previously established by Arakawa  \cite{Ara13} using different methods for the Bershadsky-Polyakov vertex algebra, that is, the simple $\W$-algebra associated with $\sl{3}$ (type $A_2$) and $f$ subregular (in this case, $f$ is also a minimal nilpotent element of $\sl{3}$).

In this article, we prove the rationality of the exceptional $\W$-algebra $\W_k(\g,f)$ associated with $\g=\sp{4} \cong\so{5}$ (type $C_2=B_2$) and $f=f_{subreg}$ a subregular nilpotent element of $\g$. 
Note that the subregular nilpotent orbit $\O_{subreg}$ of $\sp{4}$ is associated with the partition $(2^2)$ of $4$ and that $f_{subreg}$ is of {\em Levi type} which means that $f$ is a principal nilpotent element in a Levi subalgebra of $\sp{4}$ containing it 
(see Remark \ref{Rem:Levi_type}). 
It follows from \cite{Ara15a} that $\O_q=\O_{subreg}$ if and only if $q=3$ or $q=4$.  
The level $k$ is admissible for $\sp{4}$ with denominator $q=3$ or $q=4$ if either $k= -3+p/3$ with $\gcd{p}{3}=1$ and $p\geq 3$, or $k= -3+p/4$ with $\gcd{p}{2}=1$ and $p\geq 4$. 

Specifically, our main result is the following, proving a conjecture of Kac-Wakimoto and Arakawa for $\g=\sp{4}$ and $f=f_{subreg}$.  

\begin{MainTheoremNB}\label{Th:main}
Let $f=f_{subreg}$ be a subregular nilpotent element of $\g=\sp{4}$. 
Then the exceptional $\W$-algebra $\W_{-3+p/3}(\g,f)$, with $\gcd{p}{3}=1$, $p\geq 3$, 
and the exceptional  $\W$-algebra $\W_{-3+p/4}(\g,f)$, with $\gcd{p}{2}=1$, $p\geq 4$, 
are rational (and lisse).
Moreover, we have a complete classification of their simple modules. 
\end{MainTheoremNB}

The $\W$-algebras of the main theorem can be viewed as the ``easiest" exceptional $\W$-algebras which are not covered by Arakawa and van Ekeren works. 
It is also a natural analog to the Bershadsky-Polyakov vertex algebra for the type $C_2$. 
In fact, the $\W$-algebra $\W_{-3+p/2}(\sp{4},f_{min})$, with $\gcd{p}{2}=1$ and $p\geq 4$, associated with $\g=\sp{4}$ and $f=f_{min}$ a minimal nilpotent element of $\sp{4}$ is also a natural analog.  
The latter is a bit more difficult to study than the one of our main theorem because of the number of its generators, but we also plan to study its rationality.   
More generally, we plan to study the rationality of the exceptional minimal $\W$-algebra $\W_{-h^\vee+p/2}(\sp{n},f_{min})$, $n \geq 4$, with $h^\vee=n/2+1$.  
Note that the closure of the minimal nilpotent orbit of $\sp{n}$, for any $n\geq 4$, appears as associated variety of simple admissible affine vertex algebras $L_k(\sp{n})$, while it does not for $\so{n}$ for $n\geq 7$.

We now say a few words about the proof. 
Contrary to the minimal $\W$-algebras, operator product expansions (OPE) are not known for an arbitrary $\W$-algebra.
Our first step is to compute explicit generators of $\W^{k}(\sp{4},f_{subreg})$, $k \in \C$, and OPEs between them. 
We refer to Sect.~\ref{sec:BRST} for general facts about $\WA$, for $f$ {\em even}\footnote{It means that the Dynkin grading associated with $f$ is even.}, and to Sect.~\ref{sec:OPE} for the computation of OPEs in the case where $\g=\sp{4}$ and $f=f_{subreg}$. Notice that the explicit OPEs are not only useful for the rationality property of $\W_{k}(\sp{4},f_{subreg})$ and they can be used to obtain remarkable isomorphisms of vertex algebra\footnote{For instance we will show in a future work that $\W_{-1}(\sp{4},f_{subreg})\simeq M(1)$ and $\W_{-2}(\sp{4},f_{subreg})\simeq\Vir^{-2}$. We thanks Dražen Adamović for pointing out these results to us.}.
Next, following ideas of \cite{Ara13}, we use the twist action introduced in \cite{Li97} to obtain a finite set of possible simple $\W_{k}(\sp{4},f_{subreg})$-modules, with $k$ as in the theorem (see Sect.~\ref{sec:twist}). 
Unfortunately, contrary to the case where $f=f_{min}$ is minimal, the functor $H_{f}^{0}(?)$ so that $\WA=H_f^{0}(\VA)$ is not exact for an arbitrary nilpotent element $f$. 
So we cannot use directly the methods of \cite{Ara13} to conclude that our set is exactly the set of simple $\W_{k}(\sp{4},f_{subreg})$-modules, 
with $k$ as in the theorem. 
To encounter the difficulty, we exploit technics of \cite{AvE19} to show that all elements of our set are simple $\W_{k}(\sp{4},f_{subreg})$-modules, 
and that there is no nontrivial extensions between them (see Sect.~\ref{sec:proof}). 

Some standard facts on vertex algebras are summarized in Sect.~\ref{sec:vertex_algebras}, 
and the main notation on affine Kac-Moody algebras and corresponding affine vertex 
algebras can be found in Sect.~\ref{sec:affine_vertex_algebras}. 

As a by-product of our proof, we obtain an explicit description of the simple $\W_{k}(\sp{4},f_{subreg})$-modules.
In Sect.~\ref{sec:componentgroup} we show that the component group of the nilpotent orbit $\O_{subreg}$ acts non-trivially on the finite set of the simple $\W_{k}(\sp{4},f_{subreg})$-modules.
Moreover, one uses this description to compute the characters of simple modules (see Sect.~\ref{sec:characters}) and formulate a conjecture about the link between simple $\W_{k}(\sp{4},f_{subreg})$-modules and certain highest weight representations of $\widehat{\sp{4}}$ (see Conjecture~\ref{conjecture}). It is then possible to deduce the fusion rules corresponding to the simple $\W$-algebra. We hope to go back to this topic in a future work. 

\subsection*{Acknowledgments} 
The author is very grateful to her thesis advisors Anne Moreau and Tomoyuki Arakawa for suggesting the problem and for useful conversations and comments. She also thanks the members of the $\W$-algebra working group of the University of Lille.

The author acknowledges support from the Labex CEMPI (ANR-11-LABX-0007-01).

\section{Vertex algebras}
\label{sec:vertex_algebras}
A \emph{vertex algebra} is a complex vector space $V$ equipped with a distinguished vector $\vac \in V$ and a linear map 
\[V \to ({\rm End}\,V)[[z,z^{-1}]], \quad a \mapsto a(z)=\sum_{n\in \Z} a_{(n)} z^{-n-1}\]
satisfying the following axioms: 
\begin{itemize}
\item $a(z)b \in V((z))$ for all $a,b \in V$, 
\item (vacuum axiom) $\vac (z) = \id_V$ and $a(z) \vac = a+ zV[[z]]$ for all $a \in V$, 
\item (locality axiom) $(z-w)^N [a(z),b(w)]=0$ for a sufficiently large $N$ for all $a,b \in V$. 
\end{itemize}
The {\em normally ordered product} of two fields $a(z)=\sum_{n\in\Z}a_{(n)}z^{-n-1}$ and $b(z)=\sum_{n\in\Z}b_{(n)}z^{-n-1}$ of a vertex algebra $V$ is defined by
\begin{equation*}
\NO{a(z)b(z)}=a(z)_+b(z)+b(z)a(z)_-,
\end{equation*}
where $a(z)_+:=\sum_{n<0}a_{(n)}z^{-n-1}$ and $a(z)_-:=\sum_{n\geq0}a_{(n)}z^{-n-1}$. 
The vertex algebra $V$ is said to be {\em strongly generated} \cite{KRW03} by a family of fields $\{a^i(z)\}_{i \in I}$ if the space of fields of $V$ is spanned by normally ordered products of the fields $\{a^i(z)\}_{i \in I}$ and their derivatives.
This means that, as a vector space, $V$ is spanned by 
\begin{align}\label{eq:PBW}
a_{(-n_1)}^{i_1} \ldots a_{(-n_s)}^{i_s} \vac
\end{align}
with $s \geq 0$, $n_r \geq 1$ and $i_r \in I$. 
The structure of $V$ is completely determined by the OPEs among the $a^i (z)$'s, $i\in I$, or, equivalently, the Lie brackets in End$(V)$ among the $a_{(n)}^{i_r}$'s. 
If $V$ is strongly generated by the fields $\{a^i(z)\}_{i \in I}$, we call the set of monomials \eqref{eq:PBW}, where the sequence of pairs $(i_1,n_1),\ldots,(i_r,n_r)$ is decreasing in the lexicographical order, a {\em PBW basis} of $V$. 

A vertex algebra $V$ is called {\em conformal} if there exists a vector $\omega$ called the {\em conformal vector} such that
 $L(z)=\sum_{n\in \Z}L_nz^{-n-2}:=Y(\omega,z)$ satisfies that 
\begin{itemize}
\item[(a)] $[L_m,L_n]=(m-n)L_{m+n}+\frac{m^3-m}{12}\delta_{m+n,0}c_V$,
 where $c_V$ is some constant called the {\em central charge} of $V$,
\item[(b)] $L_0$ acts semisimply on $V$,
\item[(c)] $L_{-1}=T$ on $V$, where $T\colon V \to V$, $a \mapsto a_{(-2)}\vac$ is the translation operator. 
\end{itemize}
Here, $\indic{i,j}$ stands for the Kronecker symbol. 
For a conformal vertex algebra $V$ and a $V$-module $M$, we set $M_d=\{m\in M\colon L_0 m=dm\}$.
The $L_0$-eivenvalue of a nonzero $L_0$-eigenvector $m$ is called the {\em conformal weight}. 

If $V$ is conformal and $a \in V$ is homogeneous of conformal weight $\Delta_a$, we set $a_n=a_{(n+\Delta_a-1)}$ so that 
\begin{align}\label{eq:conformal_fileds}
a(z) = \sum_{n\in\Z} a_n z^{-n-\Delta_a},
\end{align}
which is more standard notation in physics.
 
Let $M$ be a module over a conformal vertex algebra $V$ of central charge $c$.
The module $M$ is called a {\em positive energy representation} if $L_0$ acts semisimply with spectrum bounded below, that is,
\[M=\bigoplus\limits_{d \in \chi + \Z_{\geq 0}}M_{d},\]
with $M_{\chi} \not= 0$. 
Let $\top{M}$ be the {\em top degree component} $M_{\chi}$ of $M$.

By Zhu's theorem \cite{Zhu96}, the correspondence $M \mapsto \top{M}$ gives a bijection between the set of isomorphism classes of irreducible positive energy representations of $V$ and that of simple $\Zhu(V)$-modules, where $\Zhu(V)$ is the {\em Zhu algebra} of $V$ (see, for example, \cite{AM20} 
for more detail). 

With every vertex algebra $V$ one associates a Poisson algebra $R_V$, called the {\em Zhu $C_2$-algebra}, as follows (\cite{Zhu96}). 
Let $C_2(V)$ be the subspace of $V$ spanned by the elements $a_{(-2)}b$, where $a,b \in V$, and set $R_V =V/C_2(V)$. 
Then $R_V$ is naturally a Poisson algebra by
\[1= \overline{\vac}, \qquad \bar{a} .\bar{b} = \overline{a_{(-1)}b} \quad 
\text{ and }\quad \{\bar{a},\bar{b}\} =\overline{a_{(0)}b},\]
where $\bar{a}$ denotes the image of $a\in V$ in the quotient $R_V$. 

The {\em associated variety} \cite{Ara12} $X_V$ of a vertex algebra $V$ is the affine Poisson variety defined by 
\[X_V = {\rm Specm}\, R_V.\]
A vertex algebra $V$ called {\em lisse} if $\dim X_V=0$.
 
\section{Affine vertex algebras}\label{sec:affine_vertex_algebras}
Let $\g$ be a complex simple Lie algebra as in introduction.
Let $\g=\n_- \+ \h\+ \n_+$ be a triangular decomposition with a Cartan subalgebra $\h$, $\Delta$ the root system of $(\g,\h)$, $\Delta_{+}$ a set of positive roots for $\Delta$ and $\Pi=\{\alpha_1,\dots,\alpha_{\ell}\}$ the corresponding set of simple roots.
Let $\theta$ be the highest positive root. 
We also have $\Delta^\vee$ the set of coroots.
Let $P$ be the weight lattice, $Q$ the root lattice and $Q^\vee$ the coroot lattice. 

Let $\extg = \g[t,t^{-1}] \oplus \C K \+ \C D$  be the extended affine Kac-Moody algebra, with the commutation relations:  
\[[x t^m,y t^n] = [x,y] t^{m+n} + m \indic{m+n,0} (x|y) K, \quad [D,xt^n]=-nxt^n,\quad [K,\affg]=0,\]
for all $x,y \in\g$ and all $m,n\in\Z$, where $(~|~)=\displaystyle{\frac{1}{2 h^\vee}}\times\killing$ is the normalized invariant inner product of $\g$, $\killing$ is the Killing form of $\g$ and $x t^n$ stands for $x \otimes t^n$, for $x \in \g$, $n \in \Z$. 
Let $\extg=\widehat{\n}_-\+ \exth\+\widehat{\n}_+$ be the standard triangular decomposition, that is, $\exth = \h \oplus \C K\+ \C D $ is a Cartan subalgebra of $\extg$, $\widehat{\n}_+=\n_++t\g[t]$ and $\widehat{\n}_-=\n_-+t^{-1}\g[t^{-1}]$.

Let $\affg =[\extg,\extg]= \g[t,t^{-1}] \oplus \C K$, and let $\affh=\h\+ \C K\subset \affg$, so that $\affg=\widehat{\n}_-\+ \affh\+ \widehat{\n}_+$.
The Cartan subalgebra $\exth$ is equipped with a bilinear form extending that on $\h$ by
\begin{align*}
(K | D) = 1, \quad (\h | \C K \oplus \C D) = (K | K) = (D | D) = 0.
\end{align*}
We write $\Lambda_0$ and $\delta$ for the elements of $\exth^*$ orthogonal to $\h^*$ and dual to $K$ and $D$, respectively. 
We have the (real) root systems
\begin{align*}
 \widehat{\Delta}^{re} &= \{\alpha+n\delta  \mid n \in \Z, \alpha \in \Delta\}= \widehat{\Delta}^{re}_+\sqcup  (-\widehat{\Delta}^{re}_+),\\
 &\widehat{\Delta}^{re}_+=\{\alpha+n\delta\mid \alpha\in \Delta_+, \ n\geq 0\}\sqcup \{-\alpha+n\delta\mid \alpha\in \Delta_+, \ n> 0\},
\end{align*}
and the affine Weyl group $\widehat{W}$, generated by reflections $r_\alpha$ with $\alpha\in  \widehat{\Delta}^{re}$. 
For $\alpha \in \h^*$ the translation $t_\alpha : \exth^* \rightarrow \exth^*$ is defined by
\begin{align*}
t_\alpha(\lam) = \lam + \lam(K)\alpha - \left[(\alpha | \lam) + \frac{|\alpha|^2}{2} \lam(K) \right] \delta.
\end{align*}
For $\alpha \in Q^\vee$ we have $t_\alpha \in \widehat{W}$ and in fact $\widehat{W} \cong W \ltimes t_{Q^\vee}$, where $t_{Q^\vee}:=\{t_\alpha \colon \alpha \in Q^\vee\}$ and $W$ is the Weyl group of $\g$. 
The extended affine Weyl group, which is the group of isometries of $\widehat{\Delta}$, is $\widetilde{W} = W \ltimes t_{P}$, 
where $t_{P}:=\{t_\alpha \colon \alpha \in P\}$.

Let $\mc{O}_k$ be the category $\mc{O}$ of $\extg$ at level $k$ (\cite{Kac90}). 
The simple objects of $\mc{O}_k$ are the irreducible highest weight representations $L(\lam)$ for $\lam\in \exth^*$ with $\lam(K)=k$.
For a weight $\lam \in \affh^*$ the corresponding {\em integral root system} is
\begin{align*}
\widehat{\Delta}(\lam) = \{\alpha\in \widehat{\Delta}^{re} \mid \left<\lam, \alpha^\vee\right> \in \Z\},
\end{align*}
where $\alpha^{\vee}=2\alpha/(\alpha|\alpha)$ as usual and let $\widehat{W}(\lambda)$ the corresponding {\em integral Weyl group} generated by the reflections $r_\alpha$, $\alpha\in\widehat{\Delta}(\lambda)$.

A weight $\lam \in \affh^*$ is said to be \emph{admissible} (equivalently, we said that $L(\lam)$ is {\em admissible}) if 
\begin{itemize}
\item $\lam$ is regular dominant, that is, $\bra \lam+\hat{\rho},\alpha^{\vee}\ket>0$ for all $\alpha\in\widehat{\Delta}_+(\lam)=\widehat{\Delta}(\lam)\cap 
\widehat{\Delta}^{re}_+$,
\item $\mathbb{Q}\widehat{\Delta}^{re} =\mathbb{Q}\widehat{\Delta}(\lam)$.
\end{itemize}
Here $\hat{\rho}=\rho+h^{\vee}\Lam_0$, with $\rho=\sum_{\alpha\in \Delta_+}\alpha/2$. 
A complex number $k$ is said to be {\em admissible} if $\lam =k \Lam_0$ is an admissible weight. 

\begin{Proposition}[{\cite{KW89}}]
A complex number $k$ is admissible if and only if it is of the form
	\[k=-h^\vee+\frac{p}{q},\]
with $p,q\in \Z_{> 0}$, $\gcd{p}{q}=1$ and either $p\geq h^\vee$ if $\gcd{r^\vee}{q}=1$, or $p\geq h$ if $\gcd{r^\vee}{q} =r^\vee$. 
Here $r^\vee$ is the lacity of $\g$, that is,
$r^\vee=1$ if $\g$ has type $A_\ell,D_\ell,E_6,E_7,E_8$, 
$r^\vee=2$ if $\g$ has type $B_\ell,C_\ell,F_4$ and 
$r^\vee=3$ if $\g$ has type $G_2$.
\end{Proposition}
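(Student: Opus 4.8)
The plan is to unwind the definition of admissibility for the weight $\lambda=k\Lambda_0$. Write $\hat\rho=\rho+h^\vee\Lambda_0$, so that $\lambda+\hat\rho=(k+h^\vee)\Lambda_0+\rho$, and recall that, via the identification $\exth\cong\exth^*$ induced by $(~|~)$, a real root $\alpha=\beta+n\delta$ with $\beta\in\Delta$ has coroot $\alpha^\vee=\beta^\vee+\frac{2n}{(\beta|\beta)}K$, where $\frac{2}{(\beta|\beta)}$ equals $1$ if $\beta$ is long and $r^\vee$ if $\beta$ is short. Since $\langle\Lambda_0,\beta^\vee\rangle=\langle\rho,K\rangle=0$ and $\langle\Lambda_0,K\rangle=1$, this gives
\[
\langle\lambda,\alpha^\vee\rangle=\frac{2n}{(\beta|\beta)}\,k,\qquad
\langle\lambda+\hat\rho,\alpha^\vee\rangle=\frac{2n}{(\beta|\beta)}\,(k+h^\vee)+\langle\rho,\beta^\vee\rangle .
\]
In particular $\Delta\subseteq\widehat\Delta(\lambda)$, so the condition $\mathbb{Q}\widehat\Delta^{re}=\mathbb{Q}\widehat\Delta(\lambda)$ holds exactly when $\widehat\Delta(\lambda)$ contains some root $\beta+n\delta$ with $n\neq0$, i.e.\ exactly when $k\in\mathbb{Q}$. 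Assuming this, write $k+h^\vee=p/q$ in lowest terms with $q\in\Z_{>0}$; the first formula above then shows that, for $\beta\neq0$, one has $\beta+n\delta\in\widehat\Delta(\lambda)$ iff $q\mid n$ when $\beta$ is long and iff $\tfrac{q}{\gcd{q}{r^\vee}}\mid n$ when $\beta$ is short.

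Next I would determine the base of $\widehat\Delta(\lambda)$, which is an affine root system of rank $\ell+1$. Since $r^\vee\in\{1,2,3\}$ we have $\gcd{q}{r^\vee}\in\{1,r^\vee\}$, and two cases arise. In both of them the finite simple roots $\alpha_1,\dots,\alpha_\ell$ remain simple in $\widehat\Delta(\lambda)$ (comparison of $\delta$-degrees), and the remaining affine node is the indecomposable positive root of least positive $\delta$-degree: it is $q\delta-\theta$ when $\gcd{q}{r^\vee}=1$ (so that $\widehat\Delta(\lambda)$ is the affine root system of type $\g^{(1)}$, rescaled along $\delta$), and $(q/r^\vee)\delta-\theta_s$ when $r^\vee\mid q$ (so that $\widehat\Delta(\lambda)$ is the Langlands-dual twisted affine root system), where $\theta_s$ is the highest short root; indecomposability uses that $\theta-\beta$, resp.\ $\theta_s-\beta$, lies in the nonnegative span of $\Pi$ for every $\beta\in\Delta_+$, resp.\ every short $\beta\in\Delta_+$. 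Since every positive real coroot is a nonnegative integer combination of the simple coroots, $\lambda+\hat\rho$ is regular dominant iff $\langle\lambda+\hat\rho,\gamma^\vee\rangle>0$ for each simple root $\gamma$ of $\widehat\Delta(\lambda)$; and for the finite ones $\langle\lambda+\hat\rho,\alpha_i^\vee\rangle=\langle\rho,\alpha_i^\vee\rangle=1$ automatically. Thus admissibility of $\lambda$ reduces to a single inequality, on the affine simple root.

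Finally I would evaluate that inequality. When $\gcd{q}{r^\vee}=1$, the affine simple root is $q\delta-\theta$ with coroot $qK-\theta^\vee$, so that $\langle\lambda+\hat\rho,qK-\theta^\vee\rangle=(k+h^\vee)q-\langle\rho,\theta^\vee\rangle=p-(h^\vee-1)$, a positive integer iff $p\geq h^\vee$. When $r^\vee\mid q$, the affine simple root is $(q/r^\vee)\delta-\theta_s$ with coroot $qK-\theta_s^\vee$, so that $\langle\lambda+\hat\rho,qK-\theta_s^\vee\rangle=p-\langle\rho,\theta_s^\vee\rangle=p-(h-1)$, positive iff $p\geq h$; here the input is the classical identity $\langle\rho,\theta_s^\vee\rangle=h-1$ (the height of the highest root of $\Delta^\vee$, the Coxeter number being invariant under Langlands duality), parallel to $\langle\rho,\theta^\vee\rangle=h^\vee-1$. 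Reading these equivalences in both directions yields exactly the stated list, the positivity of $p$ being automatic from $p\geq h^\vee\geq1$, resp.\ $p\geq h\geq1$. The step needing the most care is the middle one — identifying $\widehat\Delta(\lambda)$ and its affine simple root in the ramified case $r^\vee\mid q$ — together with the two height identities $\langle\rho,\theta^\vee\rangle=h^\vee-1$ and $\langle\rho,\theta_s^\vee\rangle=h-1$, which are exactly where the dual Coxeter number and the Coxeter number enter the respective bounds.
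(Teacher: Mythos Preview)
The paper does not supply its own proof of this proposition: it is quoted from \cite{KW89} and stated without argument. So there is no ``paper's proof'' to compare against; your proposal stands on its own.

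Your argument is the standard one and is correct. You correctly compute $\langle k\Lambda_0,\alpha^\vee\rangle$ for a real root $\alpha=\beta+n\delta$, observe that the $\mathbb{Q}$-span condition forces $k\in\mathbb{Q}$, and then identify $\widehat\Delta(k\Lambda_0)$ explicitly according to whether $\gcd(q,r^\vee)=1$ or $r^\vee\mid q$. The reduction of regular dominance to the single affine simple coroot is legitimate because positive coroots of $\widehat\Delta(\lambda)$ are nonnegative integer combinations of the simple ones, and the two height identities $\langle\rho,\theta^\vee\rangle=h^\vee-1$ and $\langle\rho,\theta_s^\vee\rangle=h-1$ are exactly the classical inputs needed. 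The one place where you are a bit brisk is in asserting that $\{\alpha_1,\dots,\alpha_\ell,\,q\delta-\theta\}$ (respectively $\{\alpha_1,\dots,\alpha_\ell,\,(q/r^\vee)\delta-\theta_s\}$) is genuinely a base of $\widehat\Delta(\lambda)$; you gesture at indecomposability, but a reader might appreciate a line noting that $\widehat\Delta(\lambda)$ is isomorphic as a root system to the untwisted (resp.\ twisted) affine system via $\delta\mapsto q\delta$ (resp.\ the appropriate rescaling), from which the base is read off. This is a matter of exposition, not a gap.
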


For $k$ an admissible number, let $Pr^k$ be the set of weights $\lambda\in\h^*$ such that $\hat{\lam}=\lam+k\Lambda_0$ is admissible and there exists $y\in\widetilde{W}$ such that $\widehat{\Delta}(\hat{\lambda})=y(\widehat{\Delta}(k\Lambda_0))$.
Notice that for $\lam\in Pr^k$, if $y\in\widetilde{W}$ satisfies $y(\widehat{\Delta}(k\Lambda_0)_+)\subset \widehat{\Delta}^{re}_+$ and $\widehat{\Delta}(\lambda)=y(\widehat{\Delta}(k\Lambda_0))$, then $\widehat{W}(\lambda)=y\widehat{W}(k\Lambda_0)y^{-1}$.
The weights of $Pr^k$ are said \emph{principal admissible} if $\gcd{r^\vee}{q}=1$ and \emph{coprincipal admissible} if $\gcd{r^\vee}{q}=r^\vee$.

\subsection{Affine vertex algebras}
Given any $k\in\C$, let
\begin{align*} 
\VA = U(\affg)\*_{U(\mf{g}[t]\+ \C K)}\C_k, 
\end{align*}
where $\C_k$ is  the one-dimensional representation of $\g[t] \+ \C K$ on which $\g[t]$ acts by 0 and $K$ acts as a multiplication by the scalar $k$.
There is a unique vertex algebra structure on $\VA$ such that $\vac$ is the image of $1\* 1$ in $\VA$ and 
\[x(z) :=(x_{(-1)}\vac)(z)= \sum_{n\in\Z} (xt^n) z^{-n-1}\]
for all $x\in \g$, where we regard $\g$ as a subspace of $V$ through the embedding $x \in \g \hookrightarrow x_{(-1)}\vac \in \VA$. 
The vertex algebra $\VA$ is called the {\em universal affine vertex algebra} associated with $\g$ at level $k$.

The vertex algebra $\VA$ is conformal by Sugawara construction provided that $k$ is non-critical, that is, $k\ne -h^{\vee}$,
with central charge
\[c_{\VA}=  \dfrac{k  \dim \g }{k + h^\vee}.\]
We write \[L^\g(z)=\sum_{n\in \Z}L_nz^{-n-2}=Y(\omega_{\g},z),\]
where $\omega_\g$ is the Sugawara conformal vector.

A $\VA$-module is the same as a smooth $\affg$-module of level $k$.
For a non-critical level $k$, we consider a $\VA$-module $M$ as a $\extg$-module by letting $D$ act as the semisimplification of $-L_0$.
Let $L_k(\g)$ be the unique simple graded quotient of $\VA$. 
Then $L_k(\g) \cong L(k \Lambda_0)$ as a representation of $\affg$. 
 
For any graded quotient $V$ of $\VA$, we have $R_V=V/t^{-2}\g[t^{-1}] V$. 
In particular, $R_{\VA} \cong \mathbb{C}[ \g^* ]$ and, hence, $X_{\VA}=\g^*$. 
Furthermore, $X_{L_k(\g)}$ is a subvariety of $\g^* \cong \g$, 
which is $G$-invariant and conic with $G$ the adjoint group of $\g$. 
The associated variety $X_{L_k(\g)}$ is difficult to compute in general. 
However, it is known that $X_{L_k(\g)}=\{0\}$ 
if and only if $L_k(\g)\cong L(k \Lambda_0)$ is an integral representation of $\affg$, that is, $k \in \Z_{\geq 0}$. 
Furthermore, we have the following result: 

\begin{Proposition}[{\cite{Ara15a}}]
\label{Pro:associated_variety_admissible}
If $k=-h^\vee+p/q$ is an admissible level for $\g$, then $X_{L_k(\g)}$ 
is the closure of some nilpotent orbit $\O_q$ which only depends on $q$. 
\end{Proposition}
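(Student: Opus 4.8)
The plan is to realize $X:=X_{L_k(\g)}$ as the zero locus of an explicit $G$-stable ideal and then to pin it down by a two-sided estimate. Let $N_k\subset\VA$ be the maximal proper submodule, so $L_k(\g)=\VA/N_k$. Since for any graded quotient $V$ of $\VA$ one has $R_V=V/t^{-2}\g[t^{-1}]V$ and $R_{\VA}\cong\C[\g^*]$, we get $R_{L_k(\g)}\cong\C[\g^*]/\ov{N_k}$, where $\ov{N_k}$ is the image of $N_k$. As $N_k$ is an $\affg$-submodule, $\ov{N_k}$ is stable under the adjoint $\g$-action, so $X$ is a closed, conic, $G$-invariant, Poisson subvariety of $\g^*\cong\g$. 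It remains to show that $X$ coincides with the closure of a single nilpotent orbit $\O_q$, and that $\O_q$ depends on $\lambda=k\Lam_0$ only through the integral root system $\widehat{\Delta}(\lambda)$ --- which, for $k=-h^\vee+p/q$ admissible, depends only on $q$, giving the last assertion of the proposition.

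For the upper bound $X\subseteq\ov{\O_q}$ I would use that, for an admissible weight $\lambda$, the module $N_k$ is generated as an $\affg$-module by the singular vectors attached to the simple reflections of the integral Weyl group $\widehat{W}(\lambda)$ --- here one uses $\mathbb{Q}\widehat{\Delta}^{re}=\mathbb{Q}\widehat{\Delta}(\lambda)$ together with the Kac--Kazhdan description of linkage in $\mc{O}_k$ --- and that these singular vectors are given explicitly by the Malikov--Feigin--Fuks formula. The singular vector coming from the affine simple root involves the most negative power of $t$; its image $\ov{v_q}\in\C[\g^*]$ is an explicit weight vector, and I would compute directly that the ideal generated by the $\g$-translates of $\ov{v_q}$ has zero locus $\ov{\O_q}$ for an explicit nilpotent orbit $\O_q$. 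In type $A_{n-1}$ this produces the $q\times q$ rank conditions cutting out the orbit of Jordan type $(q^{\lfloor n/q\rfloor},\,n\bmod q)$; outside type $A$ one must also keep track of the lacity $r^\vee$, since the relevant subsystem of $\widehat{\Delta}(\lambda)$ is of type $\g$ or of type $\g^\vee$ according to whether $\gcd{r^\vee}{q}=1$ or $r^\vee$, which is the distinction between principal and coprincipal admissible levels. Because only $\widehat{\Delta}(\lambda)$ enters, $\O_q$ depends only on $q$.

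For the lower bound $\ov{\O_q}\subseteq X$ I would pass to the Zhu algebra. Write $\Zhu(L_k(\g))=U(\g)/I_k$; then $\operatorname{gr}\Zhu(L_k(\g))=\C[\g^*]/\operatorname{gr}I_k$ is a Poisson quotient of $R_{L_k(\g)}$, so $X\supseteq V(\operatorname{gr}I_k)$. By the structure theory of admissible representations, the simple highest weight $\g$-module $L(\mu)$ of highest weight $\mu\in Pr^k$ is a $\Zhu(L_k(\g))$-module, whence $I_k\subseteq\Ann_{U(\g)}L(\mu)$ and $V(\operatorname{gr}I_k)\supseteq V(\operatorname{gr}\Ann_{U(\g)}L(\mu))$. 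By Joseph's theorem the associated variety of the primitive ideal $\Ann_{U(\g)}L(\mu)$ is the closure of a single nilpotent orbit, and since $\Ann_{U(\g)}L(\mu)$ depends only on $\widehat{\Delta}(\mu)$, i.e. only on $q$, this orbit must be $\O_q$. Hence $X\supseteq\ov{\O_q}$, which together with the upper bound gives $X=\ov{\O_q}$. Alternatively, the lower bound can be read off from the asymptotic growth of the Kac--Wakimoto character of $L_k(\g)$, giving $\dim X=\dim\O_q$ and hence equality since $\ov{\O_q}$ is irreducible; in the non-degenerate case one can instead invoke Arakawa's formula $X_{\SWA}=X\cap\mc{S}_f$ (with $\mc{S}_f$ the Slodowy slice) and the lisseness of the principal $\W$-algebra, which forces a principal nilpotent into $X$.

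The main obstacle is the upper bound: extracting the precise orbit from the Malikov--Feigin--Fuks singular vector is a genuine computation, and one must show that the ideal generated by the $\g$-translates of $\ov{v_q}$ is radical with zero locus exactly $\ov{\O_q}$, rather than a strictly larger subvariety or a non-reduced scheme. This is straightforward in type $A$ but delicate in general, because both the power of $t$ occurring in the singular vector and the shape of $\ov{v_q}$ depend on $\gcd{r^\vee}{q}$, and one must make the answer uniform in $q$ and independent of the numerator $p$. The lower bound, by contrast, is essentially formal once one has the classification of admissible $\Zhu(L_k(\g))$-modules and Joseph's irreducibility theorem at hand.
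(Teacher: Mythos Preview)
The paper does not prove this proposition at all: it is quoted as a result of Arakawa \cite{Ara15a}, with no argument given beyond a pointer to the explicit tables in that reference. So there is no ``paper's own proof'' to compare against; your proposal is really a sketch of (part of) \cite{Ara15a} itself.

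That said, your outline is broadly in the spirit of Arakawa's actual argument --- a two-sided estimate, with the lower bound coming from primitive ideals of admissible highest weight modules via Joseph's irreducibility theorem, and the upper bound coming from the image in $R_{L_k(\g)}$ of an explicit singular vector. A few points to sharpen. First, your lower bound as written is slightly circular: you conclude ``this orbit must be $\O_q$'' because it depends only on $q$, but that does not yet identify it with the \emph{same} orbit produced by the upper bound; in \cite{Ara15a} this matching is done by choosing a specific $\mu\in Pr^k$ (essentially the one with the largest Gelfand--Kirillov dimension) and computing its associated variety directly. Second, the upper bound is the genuinely hard part, as you correctly flag, and Arakawa does not proceed by showing that the $G$-saturation of $\ov{v_q}$ is a radical ideal; rather, he bounds the associated variety set-theoretically, case by case in types, and the lacity dichotomy $\gcd{r^\vee}{q}\in\{1,r^\vee\}$ enters through the shape of $\widehat{\Delta}(k\Lambda_0)$ exactly as you say. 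Finally, your alternative lower bound via lisseness of the principal $\W$-algebra is not independent: that lisseness is itself a consequence of the present proposition (indeed, this is how Proposition~\ref{Pro:associated_variety_W-algebra} is deduced in the paper), so invoking it here would be circular.
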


The nilpotent orbit $\O_q$ is explicitly described in \cite[Tables 2--10]{Ara15a}. 

\section{The BRST reduction and $\W$-algebras}\label{sec:BRST}
Let $f$ be a nilpotent element of $\g$ that we embed into an $\sl{2}$-triple $(e,h,f)$ through the Jacobson-Morosov theorem:
\begin{equation*}
[h,e]=2e,\quad [h,f]=-2f,\quad [e,f]=h.
\end{equation*}
The semisimple element $x_0:=h/2$ induces an $\frac{1}{2}\Z$-gradation on $\g$,
\[\g = \bigoplus_{j \in \frac{1}{2}\Z} \g_j,\]
where $\g_j=\{y \in\g \colon [x_0,y ] = j y \}$. 
Set $\g_{\geq 0}:=\bigoplus_{j\geq0}\g_j$ and $\g_{>0}:=\bigoplus_{j>0}\g_j$. 
We define similarly $\g_{\leq 0}$ and $\g_{< 0}$.
In this note, we assume that 
\[\g_j= 0 \quad \text{ for } \quad j\not\in \Z,\]
which is sufficient for our purpose.  
In other words, we assume that $f$ is an {\em even} nilpotent element (which is the case of subregular nilpotent elements of $\mf{sp}_4$). 
Choose a basis $\{e_\alpha\}_{\alpha\in S_j}$ of each $\g_j$. 
One can assume that each $e_\alpha$ is a root vector provided that $j \not= 0$, and that for $j=0$ either $e_\alpha$ is a root vector or $e_\alpha$ belongs to the Cartan subalgebra $\h$. 
Set $S = \sqcup_{j}S_j$, $S_+ = \sqcup_{j >0}S_j$ and let $\{e^\alpha\}_{\alpha \in S_+}$ be the dual basis in $\g_{>0}^*$ to $\{e_\alpha\}_{\alpha\in S_+}$. 

The {\em Clifford affinization} $\Clifford({\g}_{>0})$ of $\g_{>0}$ is the Clifford algebra associated with $\g_{>0}[t,t^{-1}] \oplus \g^*_{>0}[t,t^{-1}]$ 
and the symmetric bilinear form defined by 
\[\langle x t^m , \psi t^n \rangle = \delta_{m+n,0} \psi(x), \quad 
\langle x t^m , y t^n \rangle =  \langle \psi t^m , \phi t^n \rangle =0,\]
for $x,y \in \g_{>0}$, $\psi, \phi \in \g_{>0}^*$. 
We write $\varphi_{\alpha,m}$ for $e_\alpha t^m \in \Clifford({\g}_{>0})$ and $\varphi^{\alpha}_{m}$ for $e^\alpha t^m \in \Clifford({\g}_{>0})$, so that $\Clifford({\g}_{>0})$ is the associative superalgebra with 
\begin{itemize}
\item odd generators: $\varphi_{\alpha,m}$, $\varphi^{\alpha}_{n}$, $m,n\in \Z$, $\alpha \in S_+$, 
\item relations: $[\varphi_{\alpha,m}, \varphi_{\beta,n}]= [\varphi^{\alpha}_{m}, \varphi^{\beta}_{n}]=0$, 
$[\varphi_{\alpha,m}, \varphi^{\beta}_{n}]=\delta_{\alpha, \beta}\delta_{m+n, 0}$,
\end{itemize}
where the parity of $\varphi_{\alpha,m}$ and $\varphi^\alpha_n$ is reverse to $e_\alpha$. 
Since elements of $\g$ are purely even, this means that $\varphi_{\alpha,m}$ and $\varphi^{\alpha}_n$ are odd. 

Define the {\em charged fermion Fock space} associated with $\g_{>0}$ as
\begin{equation*}
\Fock(\g_{>0}):= \dfrac{\Clifford({\g}_{>0})}{\sum\limits_{m\geq 0\atop \alpha \in S_+}\Clifford({\g}_{>0}) \varphi_{\alpha,m}+\sum\limits_{k\geq 1\atop \alpha \in S_+} \Clifford({\g}_{>0}) \varphi^\alpha_k}
\cong \Exterior \big(\varphi_{\alpha,n}\big)_{\substack{n<0 \\ \alpha \in S_+}} 
\* \Exterior \big(\varphi^{\alpha}_m\big)_{\substack{m \leq 0\\ \alpha \in S_+}},
\end{equation*}
where $\Exterior (a_i)_{i\in I}$ denotes the exterior algebra with generators $a_i$, $i \in I$. 
It is an irreducible $\Clifford(\g_{>0})$-module, and as $\C$-vector spaces we have
\begin{equation*}
\Fock(\g_{>0}) \cong  \Exterior (\g_{>0}^\ast[t^{-1}])\* \Exterior(\g_{>0}[t^{-1}]t^{-1}).
\end{equation*}
There is a unique vertex (super)algebra structure on $\Fock(\g_{>0})$ such that the image of $1$ is the vacuum $\vac$  and 
\begin{align*}
& Y(\varphi_{\alpha,-1}|0\ket,z)=\varphi_\alpha(z):=\sum_{n\in \Z}\varphi_{\alpha,n} z^{-n-1}, 
\quad \alpha \in S_+,\\
&Y(\varphi^\alpha_{0}|0\ket,z)=\varphi^\alpha(z):=\sum_{n\in \Z}\varphi^\alpha_{n} z^{-n}, 
\quad  \alpha \in S_+.
\end{align*}
We call this vertex algebra the {\em free superfermion vertex algebra} associated with $\Clifford({\g}_{>0})$. 

The following construction of the $\W$-algebra associated with $\g$ and $f$ is due to Kac, Roan and Wakimoto \cite{KRW03,KW04,Ara05} (see also \cite{AM20} for a survey).
Let $k \in \C$ and set 
\[\mathscr{C}(\g,f,k)=\VA \otimes \Fock(\g_{>0}).\]
We now define a differential $d_{(0)}$ on $\mathscr{C}(\g,f,k)$. 
The vertex algebra $\Fock(\g_{>0})$ has the charge decomposition: 
\[\Fock(\g_{>0})= \bigoplus_m \Fock_m(\g_{>0}),\]
where ${\rm charge}\, \varphi_\alpha(z) = - {\rm charge}\, \varphi^\alpha(z) =1$ for $\alpha \in S_+$. 
Letting ${\rm charge}\,\VA=0$, this induces the charge decomposition: 
\[\mathscr{C}(\g,f,k)=\bigoplus_m \mathscr{C}_m, \qquad \mathscr{C}_m:=\mathscr{C}_m(\g,f,k).\]

Following \cite{KRW03}, we set 
\begin{align*}
	d(z) = & \sum_{\alpha\in S_+} 
	\NO{e_\alpha(z) \varphi^\alpha(z)} 
	- \frac{1}{2}\sum_{\alpha,\beta,\gamma\in S_+} 
	c_{\alpha,\beta}^\gamma 
	{  \varphi^\alpha(z)\varphi^\beta(z) \varphi_\gamma(z)} 
	& +\sum_{\alpha\in S_+} (f | e_\alpha) \varphi^\alpha(z),
\end{align*}
where $[e_\alpha,e_\beta]=\sum_{\gamma}c_{\alpha,\beta}^\gamma e_\gamma$. 
The field $d(z)$ does not depend on the choice of the basis. 
One has $[d(z),d(w)]=0$ and therefore $d_{(0)}^2=0$ since $d(z)$ is odd. 
Moreover, $[d_{(0)},\mathscr{C}_m]\subset\mathscr{C}_{m-1}$. 
Thus $(\mathscr{C}(\g,f,k),d_{(0)})$ is a $\Z$-graded homology complex. 
The zero-th homology of this complex is a vertex algebra, denoted by $\WA$: 
\[\WA := H^0(\mathscr{C}(\g,f,k),d_{(0)}),\]
that we briefly write 
$$\WA=H_f^0(\VA).$$
This construction is a particular case of BRST reduction which is usually referred to as the Drinfeld-Sokolov reduction of $\VA$. 
The vertex algebra $\W^k(\g,f)$ is called the {\em (universal) $\W$-algebra associated with $\g$ and $f$} at the level $k$.
Its simple graded quotient will be denoted by $\SWA$. 

Provided that $k\not= -h^\vee$, define the conformal field by:
\[L(z)=L^\g(z) +\frac{d}{dz} x(z) + L^{ch}(z),\]
where 
\[L^{ch}(z) = - \sum_{\alpha\in S_+} m_\alpha \NO{\varphi^\alpha \partial \varphi_\alpha} 
+  \sum_{\alpha\in S_+} (1-m_\alpha) \NO{\partial  \varphi^\alpha \varphi_\alpha}.\]
Here $m_\alpha$ is defined by letting $m_\alpha=j$ is $e_\alpha \in \g_j$. 
The central charge of $\WA$ is: 
\[c_{\WA}=\dim \g_0 -\dfrac{12}{k+h^\vee} | \rho - (k+h^\vee) x |^2.\]

It is known  \cite{DK06} that the associated variety of $\WA$ is the {\em Slodowy slice} $\mathscr{S}_f:=f +\g^{e}$, where $\g^{e}$ is the centralizer of $e$ in $\g$. 
Moreover by \cite{Ara15a}, the associated variety of $H_{f}^0(L_k(\g))$ is $X_{L_k(\g)} \cap \mathscr{S}_f$. 
Since $\W_k(\g,f)$ is a quotient of $H_{f}^0(L_k(\g))$, provided that $H_{f}^0(L_k(\g)) \not=0$, we deduce that $\W_k(\g,f)$ is lisse whenever $X_{L_k(\g)} =\overline{\mathbb{O}}$ and $f \in\mathbb{O}$, where $\mathbb{O}$ is some nilpotent orbit of $\g$. 
Indeed, we have $\overline{\mathbb{O}} \cap  \mathscr{S}_f=\{f\}$ if $f \in  \mathbb{O}$.
In particular, we get the following result,  useful  for our purpose.

\begin{Proposition}[{\cite{Ara15a}}]\label{Pro:associated_variety_W-algebra}
Assume that $k=-h^\vee+p/q$ is admissible for $\g$ and pick $ f\in \mathbb{O}_q$, with $\overline{\mathbb{O}_q}$ the associated variety of $L_k(\g)$ (cf.~Proposition \ref{Pro:associated_variety_admissible}). 
Then $\W_k(\g,f)$ is lisse. 
\end{Proposition}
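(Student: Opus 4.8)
The plan is to assemble Proposition~\ref{Pro:associated_variety_W-algebra} from the three ingredients already laid out in this section, treating it essentially as a corollary of Arakawa's results rather than as an independent theorem. First I would recall, from Proposition~\ref{Pro:associated_variety_admissible}, that for $k=-h^\vee+p/q$ admissible the associated variety $X_{L_k(\g)}$ equals $\overline{\mathbb{O}_q}$ for the nilpotent orbit $\mathbb{O}_q$ depending only on $q$; by hypothesis we have chosen $f\in\mathbb{O}_q$, so $X_{L_k(\g)}=\overline{\mathbb{O}}$ with $f\in\mathbb{O}$ in the notation of the paragraph preceding the statement. Next I would invoke the result of \cite{DK06,Ara15a} quoted just above: the associated variety of $H_f^0(L_k(\g))$ is $X_{L_k(\g)}\cap\mathscr{S}_f$, where $\mathscr{S}_f=f+\g^e$ is the Slodowy slice.

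The geometric heart of the argument is the transversality fact that $\overline{\mathbb{O}}\cap\mathscr{S}_f=\{f\}$ whenever $f\in\mathbb{O}$. I would justify this by the standard transversality of the Slodowy slice to adjoint orbits: $\mathscr{S}_f$ meets the orbit $\mathbb{O}$ of $f$ transversally, and by conicity/dimension count the intersection of $\mathscr{S}_f$ with $\overline{\mathbb{O}}$ — which contains only $\mathbb{O}$ and strictly smaller orbits — is the single point $f$, since lower-dimensional orbits in $\overline{\mathbb{O}}$ cannot meet the slice through a point of the top-dimensional stratum. Combining this with the previous step gives $X_{H_f^0(L_k(\g))}=\overline{\mathbb{O}}\cap\mathscr{S}_f=\{f\}$, a single reduced point, so $H_f^0(L_k(\g))$ is lisse; in particular it is nonzero, which is needed for the next step.

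Finally, since $\SWA=\W_k(\g,f)$ is by definition a graded quotient of $H_f^0(L_k(\g))$ (this is the simple graded quotient of $\WA$, and $\WA=H_f^0(\VA)$ surjects onto $H_f^0(L_k(\g))$ because $L_k(\g)$ is a quotient of $\VA$ and $H_f^0$ is right-exact in the relevant sense), the surjection $R_{H_f^0(L_k(\g))}\twoheadrightarrow R_{\SWA}$ of Zhu $C_2$-algebras induces a closed embedding $X_{\SWA}\hookrightarrow X_{H_f^0(L_k(\g))}=\{f\}$. Hence $\dim X_{\SWA}=0$, i.e. $\SWA$ is lisse, which is the assertion. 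I expect the only genuinely delicate point to be the clean statement that $\W_k(\g,f)$ really is a quotient of $H_f^0(L_k(\g))$ and that the latter is nonzero under the admissibility hypothesis — both are taken from \cite{Ara15a} in the discussion preceding the proposition, so in the write-up they can simply be cited; the rest is the elementary transversality observation $\overline{\mathbb{O}}\cap\mathscr{S}_f=\{f\}$ together with functoriality of the associated variety.
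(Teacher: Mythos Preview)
Your proposal is correct and follows essentially the same route as the paper: the argument is exactly the one sketched in the paragraph immediately preceding the proposition, namely combining $X_{H_f^0(L_k(\g))}=X_{L_k(\g)}\cap\mathscr{S}_f$ from \cite{Ara15a}, the transversality fact $\overline{\mathbb{O}}\cap\mathscr{S}_f=\{f\}$ for $f\in\mathbb{O}$, and the observation that $\SWA$ is a quotient of $H_f^0(L_k(\g))$. You correctly flag the nonvanishing of $H_f^0(L_k(\g))$ and the quotient statement as the points requiring citation rather than proof; the paper handles these in the same way, deferring to \cite{Ara15a}.
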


\subsection{Generating fields of $\W^k(\g,f)$}\label{sec:fields}
Given $a \in \g_j$, 
introduce the following fields of $\mathscr{C}(\g,f,k)$ of conformal weight $1-j$:
\begin{equation*}
J^a(z)=a(z) + 
\sum_{\alpha,\beta\in S_+}c^\alpha_\beta(a)\NO{\varphi_\alpha(z)\varphi^*_\beta(z)},
\end{equation*}
where $c^\alpha_\beta(a)$ is defined by $[a,e_\beta]=\sum_{\alpha\in S}c_{\alpha}^\beta(a) e_\alpha$. 
Those fields play an important role in the structure of the $\W$-algebra $\WA$.

Let $\g^f$ be the centralizer of $f$ in $\g$, and set for $j \in \Z$, $\g^f_j := \g^f \cap \g_j$. By the theory of $\sl{2}$, we have 
\begin{align}\label{gradgf} 
\g^f = \bigoplus_{j \leq 0}\g^f_j.
\end{align}

\begin{Theorem}[{\cite[Theorem 4.1]{KW04}}]
\label{Wstructure}
For each $a\in\g^f_j$, $j\leq 0$, there exists a field $J^{\{a\}}(z)$ of $\WA$ of conformal weight $1- j$ 
with respect to $L$
such that $J^{\{a\}}(z)-J^{a}(z)$ is a linear combination of normally order products of the fields 
$J^b(z)$, where $b \in \g_{-s}$, $0 \leq s <j$ and their derivatives. 
	
Let $\{a_i\}_{i\in I}$ be a basis of $\g^f$ compatible with the graduation \eqref{gradgf}. 
Then $\WA$ is strongly generated by the fields $\{J^{\{a_i\}}\}_{i\in I}$.
\end{Theorem}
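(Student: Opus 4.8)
The plan is to prove this along the lines of Feigin--Frenkel \cite{FF90} (for $f=f_{princ}$) and Kac--Roan--Wakimoto \cite{KRW03}, in three steps: a vanishing theorem for the BRST cohomology, the character identity it yields, and an inductive construction of the cocycles $J^{\{a\}}$.

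\emph{Step 1 (vanishing and character).} I would endow $\mathscr{C}(\g,f,k)=\VA\otimes\Fock(\g_{>0})$ with Li's canonical decreasing filtration; then $\mathrm{gr}\,\mathscr{C}(\g,f,k)$ is the commutative vertex superalgebra of functions on the jet scheme $J_\infty\bigl(\g^{*}\times(\g_{>0}^{*}\oplus\g_{>0})\bigr)$, the last two summands odd, and $\mathrm{gr}\,d_{(0)}$ is the induced Koszul-type differential. Forgetting jets, it is the $\chi$-twisted Chevalley--Eilenberg complex $\mathcal{O}(\g^{*})\otimes{\textstyle\bigwedge}^{\bullet}\g_{>0}^{*}$, $\chi=(f|\cdot)|_{\g_{>0}}$, computing the Hamiltonian reduction $\mu^{-1}(\chi)/N_{>0}$ with $\mu\colon\g^{*}\to\g_{>0}^{*}$ the restriction and $N_{>0}=\exp(\g_{>0})$; since $N_{>0}$ acts freely on $\mu^{-1}(\chi)$ with quotient the Slodowy slice $\mathscr{S}_f\cong\C[\g^f]$, an affine space, this complex resolves $\mathcal{O}(\mathscr{S}_f)$ and is acyclic in positive cohomological degree. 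Passing to jet schemes preserves this, so $H^{\neq 0}(\mathrm{gr}\,\mathscr{C}(\g,f,k),\mathrm{gr}\,d_{(0)})=0$ and $H^{0}$ is the differential polynomial algebra on $\dim\g^f_j$ generators of conformal weight $1-j$, $j\leq 0$. The filtration being separated, exhaustive and bounded on each (finite-dimensional) conformal-weight space, the associated spectral sequence converges: $H^{\neq 0}(\mathscr{C}(\g,f,k),d_{(0)})=0$ and $\ch\WA\leq\prod_{j\leq 0}\prod_{m\geq 1-j}(1-q^{m})^{-\dim\g^f_j}$. This bound is attained, for the alternating sum over $m$ of the characters of $\mathscr{C}_{m}(\g,f,k)$ is computable term by term and telescopes --- using $\dim\g_j=\dim\g_{-j}$ --- to exactly that product, while it equals $\ch\WA$ by the vanishing just shown.

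\emph{Step 2 (construction of the generators).} The field $J^{a}(z)$, $a\in\g^f_j$, is a cochain of charge $0$ and conformal weight $1-j$. I would build $J^{\{a\}}$ by induction on the conformal weight $1-j$, i.e.\ descending in $j\leq 0$, maintaining the hypothesis that in each strictly smaller conformal weight $\WA$ is spanned by normally ordered monomials in the generators already constructed and their derivatives. For $j=0$ one checks $d_{(0)}J^{a}=0$ directly and sets $J^{\{a\}}=J^{a}$. For $j<0$, $d_{(0)}J^{a}$ is a cocycle of charge $1$ and conformal weight $1-j$, hence a coboundary $d_{(0)}c$ by Step 1, with $c$ of charge $0$ and conformal weight $1-j$; comparing Li-leading symbols --- that of $J^{a}$ being a free generator of $\mathrm{gr}\,\WA=\mathcal{O}(J_\infty\mathscr{S}_f)$ by Step 1 --- one may take $c$ inside the span of normally ordered monomials in the $J^{\{b\}}$ with $b\in\g^f_{j'}$, $j'>j$, and their derivatives. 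Then $J^{\{a\}}:=J^{a}-c$ is a $d_{(0)}$-cocycle of conformal weight $1-j$ with $J^{\{a\}}-J^{a}$ of the asserted form.

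\emph{Step 3 (strong generation; the main obstacle).} With $\{a_i\}_{i\in I}$ a basis of $\g^f$ adapted to the grading, the normally ordered monomials \eqref{eq:PBW} in the $J^{\{a_i\}}$ and their derivatives span a subspace $\mathcal{W}'\subseteq\WA$; taking Li-leading symbols identifies these monomials with the corresponding monomials in the jet algebra of Step 1, which are linearly independent, so $\ch\mathcal{W}'=\prod_{j\leq 0}\prod_{m\geq 1-j}(1-q^{m})^{-\dim\g^f_j}$. By Step 1 this equals $\ch\WA$, whence $\mathcal{W}'=\WA$: the fields $\{J^{\{a_i\}}\}_{i\in I}$ strongly generate $\WA$ and \eqref{eq:PBW} is a PBW basis. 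I expect the crux to be Step 1: the classical acyclicity --- resting on the freeness of the $N_{>0}$-action on $\mu^{-1}(\chi)$, a Kostant-type transversality, and on the jet scheme of the complete intersection $\mu^{-1}(\chi)$ being well behaved --- together with the convergence of the spectral sequence relating $\mathrm{gr}$-cohomology to genuine cohomology. Steps 2 and 3, while requiring care, are then bookkeeping.
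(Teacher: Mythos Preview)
The paper does not prove this theorem: it is quoted as \cite[Theorem~4.1]{KW04} and used as a black box (the sentence following it only explains how one applies it in practice). There is therefore no argument in the paper to compare your proposal against.

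For context, your sketch is a valid outline of one proof, but it is not the route taken in the cited source. Kac--Wakimoto do not pass to Li's filtration or jet schemes; they split the complex as a tensor product of a contractible piece (generated by the $e_\alpha$ and $\varphi_\alpha$, $\alpha\in S_+$) and a subcomplex generated by the $J^a$ and $\varphi^\alpha$, and then run a spectral sequence on a bigrading of the latter coming from the $x_0$-grading. Their construction of $J^{\{a\}}$ is correspondingly more explicit than your Step~2: the formula for $d_{(0)}J^a$, computed directly in \cite{KRW03,KW04}, exhibits the obstruction as a sum of terms of the shape $\NO{\varphi^\alpha P_\alpha}$ with each $P_\alpha$ already a differential polynomial in the $J^b$, $b\in\g_{-s}$, $0\leq s<-j$, and one removes these by hand. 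Your Step~2 instead produces the correction $c$ abstractly from the vanishing theorem and then argues about its shape via leading symbols; this works, but yields the precise form of $J^{\{a\}}-J^a$ asserted in the statement less transparently. In spirit your argument is closer to \cite{DK06} and to Arakawa's later treatments than to \cite{KW04} itself.
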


In practice, to construct $J^{\{a\}}(z)$ from $J^{a}(z)$, with $a \in \g^{f}$, 
we write a linear combination as in the theorem and try to find the coefficients so that the field $J^{\{a\}}(z)$ is $d_{(0)}$-closed. 

\section{Generators of $\W^k(\sp{4},f_{subreg})$ and OPEs}\label{sec:OPE}
From now on, $\g$ refers to the simple Lie algebra $\sp{4}$ that we may realize as the set of $4$-size square matrices $x$ such that $x^T J_4 + J_4 x =0$, where $J_4$ is the anti-diagonal matrix given by 
\[J_4 = \begin{pmatrix} 0&U_2\\-U_2&0\end{pmatrix},\qquad\text{where } U_2=\begin{pmatrix} 0&1\\1&0\end{pmatrix}.\]
We make the standard choice that $\h$ is the set of diagonal matrices of $\g$. 
Nilpotent orbits of $\g=\sp{4}$ are parametrized by the partitions of $4$ such that the number of parts of each odd  number is even (see, for instance, \cite[Theorem~5.1.3]{CM93}). 
Thus there are four nilpotent orbits in $\g=\sp{4}$ corresponding to the following partitions: $(4)$, $(2^2)$, $(2,1^2)$, $(1^4)$. 
They correspond, respectively, to the principal (or regular), the subregular, the minimal and the zero nilpotent orbits of $\g$, with respective dimensions $8$, $6$, $4$, $0$. 

Write $\Pi=\{\alpha_1,\alpha_2\}$ a set of simple roots for the root system $\Delta$ of $(\g,\h)$ such that $\alpha_1$ is a long root and $\alpha_2$ is short. 
Then $\Delta_+=\{\alpha_1,\alpha_2,\eta,\theta\}$, with $\eta:=\alpha_1+\alpha_2$ and $\theta:=\alpha_1+2\alpha_2$ the highest positive root. 
The centralizer of $e_{-\eta}$ is four-dimensional generated by $e_{-\eta},e_{-\alpha_1},e_{-\theta},h_2$, where $h_i := \alpha_i^\vee \in (\h^*)^* \cong \h$, for $i=1,2$. 
Hence  \[f:=e_{-\eta}=f_{subreg}\] belongs to the subregular nilpotent orbit of $\g$. 
Setting $e:=e_{\eta}$ and $h:=[e,f]=2h_1+h_2$ we get the $\sl{2}$-triple $(e,h,f)$ of $\g$.  
The nilpotent element $f$ is even and we have:
\[\g=\g_{-1}\oplus\g_0\oplus\g_1.\]
Moreover, $\g^f=\g_{-1}^f\oplus\g^f_0$, with $\g_{-1}^{f}=\C f \oplus \C e_{-\alpha_1}\oplus \C e_\theta$ and $\g^f_0= \C h_2.$ 
\begin{Remark}
\label{Rem:Levi_type}
The smallest Levi subalgebra of $\sp{4}$ containing $f=e_{-\eta}$ 
has semisimple type $A_1$ 
with basis $h_1,h_2, 
e_{\pm \eta}$ (it is the centralizer in $\sp{4}$ of $h_2$), 
and, hence, $f$ has Levi type since it is principal in it.
\end{Remark}

Using Theorem \ref{Wstructure} and the computer program {Mathematica} (there is a Mathematica package \cite{Thi91} which provides a computer program for the OPE calculations), we obtain that the vertex algebra $\WA$ is strongly generated by the fields $J(z)$, $G^\pm(z)$ and $L(z)$, provided that $k \neq -3$, where: 
\begin{align*}
J(z)&= J^{\{h_2\}}(z)= \frac{1}{2}J^{h_2}(z),\\
G^+(z)&= J^{\{e_{-\alpha_1}\}}(z)= J^{e_{-\alpha_1}}(z)+\frac{1}{2}(\NO{J^{h_1}(z)J^{e_{\alpha_2}}}+(k+2)\partial J^{e_{\alpha_2}}(z)),\\
G^-(z)&= J^{\{e_{-\theta}\}}(z)= J^{e_{-\theta}}(z)+\frac{1}{2}(\NO{J^{h_1}(z)J^{e_{-\alpha_2}}(z)}+\NO{J^{h_2}(z)J^{e_{-\alpha_2}}(z)}+(k+2)\partial J^{e_{-\alpha_2}}(z)),\\
L(z)&=J^{\{f\}}(z)=\frac{1}{(3+k)}(-J^{f}(z)+\frac{1}{2}(\NO{J^{e_{\alpha_2}}(z)J^{e_{-\alpha_2}}(z)}+\NO{J^{h_1}(z)^2}+\NO{J^{h_1}(z)J^{h_2}(z)}\\&\qquad+(5+2k)\partial J^{h_1}(z))+\NO{J(z)^2}+(1+k)\partial J(z)).
\end{align*}
These fields satisfy the following OPE's :
\begin{align*}
J(z)J(w)\sim&\OPE{2}{(2+k)}{},\\
J(z)G^\pm(w)\sim&\pm\OPE{}{1}{}G^\pm(w),\\
L(z)L(w)\sim&\OPE{4}{\charge}{2}+\OPE{2}{2}{}L(w)+\OPE{}{1}{}\partial L(w),\\
L(z)G^\pm(w)\sim&\OPE{2}{2}{}G^\pm(w)+\OPE{}{1}{}\partial G^\pm(w),\\
L(z)J(w)\sim&\OPE{2}{1}{}J(w)+\OPE{}{1}{}\partial J(w),\\
G^\pm(z)G^\pm(w)\sim&0,\\
G^+(z)G^-(w)\sim&-\OPE{4}{3(1+k)(2+k)^2}{}-\OPE{3}{3(1+k)(2+k)}{}J(w)\\&+\OPE{2}{1}{}\left((2+k)(3+k)L(w)-(3+2k)\NO{J(w)^2}-\frac{3(1+k)(2+k)}{2}\partial J(w)\right)\\&+\OPE{}{1}{}\left((3+k)\NO{L(w)J(w)}+\frac{(3+k)(2+k)}{2}\partial L(w)-\NO{J(w)^3}\right.\\&\left.-(3+2k)\NO{J(w)\partial J(w)}-\frac{(5+4k+k^2)}{2}\partial^2 J(w)\right),
\end{align*}
where 
\[\charge :=-\frac{2(9+16k+6k^2)}{3+k}.\]

The field $L(z)=\sum_{n\in\Z}L_nz^{-n-2}$ is a conformal vector of $\WA$ with central charge $\charge$. 
It gives $J(z)$, $G^+(z)$ and $G^-(z)$ the conformal weights $1$, $2$ and $2$, respectively. 
Following \eqref{eq:conformal_fileds} we write 
\begin{align*}
J(z)&=\sum_{n\in\Z}J_nz^{-n-1},& G^\pm(z)&=\sum_{n\in\Z}G^\pm_nz^{-n-2},& L(z)&=\sum_{n\in\Z}L_nz^{-n-2}.
\end{align*}
The monomials 
\begin{equation}\label{PBW}
	J_{n_1}\ldots J_{n_j}L_{m_1}\ldots L_{m_t}G^-_{p_1}\ldots G^-_{p_g}G^+_{q_1}\ldots G^+_{q_h}\vac,
\end{equation}
with $n_1\leq\ldots \leq n_j\leq-1$, $m_1\leq\ldots m_t\leq -2$, $p_1\leq\ldots \leq p_g\leq-2$ and $q_1\leq\ldots \leq q_h\leq-2$, 
form a PBW basis of the vertex algebra $\WA$. 

From the OPEs, we deduce the commutation relations for all $m,n\in\Z$:
\begin{align*}
[J_m,J_n]=&(2+k)m\indic{m+n,0},\\
[J_m,G^\pm_n]=&\pm G^\pm_{m+n},\\
[L_m,L_n]=&\frac{\charge}{12} (m^3-m)\indic{n+m,0}+(m-n)L_{m+n},\\
[L_m,G^\pm_n]=&(m-n)G^\pm_{m+n},\\
[L_m,J_n]=&-nJ_{m+n},\\
[G^+_m,G^-_n]=&-\frac{(1+k)(2+k)^2}{2}(m^3-m)\indic{m+n,0}+\frac{(2+k)(3+k)}{2}(m-n)L_{m+n}\\
&\,+\left(\frac{3(1+k)(2+k)}{2}(m+1)(n+1)-\frac{(5+4k+k^2)}{2}(m+n+1)(m+n+2)\right)J_{m+n}\\
&\,-(3+2k)(m+1)(J^2)_{m+n}+(3+k)(LJ)_{m+n}-(J^3)_{m+n}-(3+2k)(J\partial J)_{m+n},
\end{align*}
where \begin{align*}\sum_{n\in\Z}(J^2)_nz^{-n-2}&\overset{\text{def}}{=}\NO{J(z)^2}\,, &\sum_{n\in\Z}(LJ)_nz^{-n-3}&\overset{\text{def}}{=}\NO{L(z)J(z)}\,,& \sum_{n\in\Z}(J\partial J)_nz^{-n-3}&\overset{\text{def}}{=}\NO{J(z)\partial J(z)}.\end{align*}

\section{The twist $\psi$-action over simple $\W_k(\sp{4},f_{subreg})$-modules}\label{sec:twist}
We continue to assume that $\g=\sp{4}$ and $f=f_{subreg}=e_{-\eta}$ and we keep the notation of the previous section. 
We assume for the moment that $k$ is any non-critical level for $\g$, that is, $k \not=-3$. 

Let $M$ be an irreducible positive energy representation of $\W^k(\g,f)$, with $M_{top}=M_\chi$, $\chi \in \C$ (see Sect.~\ref{sec:vertex_algebras}). 
Using the commutation relations of Sect.~\ref{sec:OPE}, we notice that the submodule $N$ of $M$ 
generated by all $J_0$-eigenvectors of $M$ is stable by $L_n, G_n^{\pm}, J_n$, $n \in \Z$. 
Hence, $N=M$ because $M$ is simple. 
\begin{Lemma}\label{Lem:existence} 
Let $M$ be an irreducible positive energy representation of $\WA$, with $\top{M}=M_\chi$, $\chi\in\C$. 
Suppose that $\top{M}$ is finite dimensional.
Then, there is a vector $v \in M$ such that $L_0 v = \chi v$, $J_0 v= \xi v$ for some $\xi \in \C$, and such that the below relations hold:
\begin{align*}
J_n v=0 \quad \text{ for } n>0,\\
L_n v=0\quad\text{ for } n>0,\\
G^+_n v=0\quad\text{ for } n >0,\\
G^-_n v=0\quad\text{ for } n\geq 0.
\end{align*}
Moreover, 
$M=\bigoplus\limits_{a\in \xi+\Z\atop d \in \chi+\Z_{\geq 0}} M_{a,d}$, 
where $M_{a,d}=\{m \in M\colon J_0 m = a m, \, L_0 m = d m\}$, $\dim M_{\xi,\chi}=1$ and  $M_{top} = M_\chi$ is spanned by the vectors $(G^+_0)^i v $ for $i\geq0$. 
\end{Lemma}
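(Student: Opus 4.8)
The plan is to produce the distinguished vector $v$ by a two-step extremal-weight argument inside $\top{M}$, and then to propagate the annihilation relations and the weight decomposition to all of $M$. First I would observe that, since $L(z)$ gives $J$, $G^\pm$ conformal weights $1,2,2$, the operators $L_n, J_n$ for $n>0$ and $G^\pm_n$ for $n>0$ lower the $L_0$-weight by $n$; hence they annihilate $\top{M}=M_\chi$ automatically. Also $G^-_0$ preserves $L_0$-weight (since $G^-$ has weight $2$ and $G^-_0 = G^-_{(1)}$), so $G^-_0$ acts on the finite-dimensional space $\top{M}$; similarly $J_0$ acts on $\top{M}$ and, from $[J_0,G^-_0]=-G^-_0$, the pair $(J_0,G^-_0)$ makes $\top{M}$ into a module over the two-dimensional solvable Lie algebra they span. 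Since $\top M$ is finite-dimensional, $J_0$ has an eigenvector there; restricting to a single $J_0$-eigenspace $W\subset \top M$ (which is $G^-_0$-stable up to the weight shift, i.e. $G^-_0\colon W_\xi\to W_{\xi-1}$), and iterating $G^-_0$, which must eventually vanish because the $J_0$-spectrum on the finite-dimensional $\top M$ is finite, I get a nonzero $v\in\top M$ with $J_0 v=\xi v$, $L_0 v=\chi v$, $G^-_0 v=0$, and automatically $J_n v = L_n v = G^+_n v = 0$ for $n>0$ and $G^-_n v = 0$ for $n\geq 1$. Together with $G^-_0v=0$ this gives all the stated relations.

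Next I would prove the structural claims about $M$. Because $M$ is an irreducible positive energy representation, it is generated by $\top M$; and by the remark preceding the lemma (the $J_0$-eigenvectors generate a submodule, so by simplicity all of $M$ is spanned by them), $J_0$ acts semisimply on $M$. Using the PBW basis \eqref{PBW} of $\WA$ and the fact that $M$ is a quotient of a Verma-type module generated by $v$, every element of $M$ is obtained by applying monomials in the negative modes $J_{n}$ ($n\leq -1$), $L_{m}$ ($m\leq -2$), $G^\pm_{p}$ ($p\leq -2$), together with $G^-_0$, to $v$; but the annihilation relations for $v$ (in particular $G^-_0 v = 0$) and the commutation relations of Sect.~\ref{sec:OPE} let me push all $G^-_0$'s to the right, so in fact $M = U(\W^{\leq})\,v$ where $U(\W^{\leq})$ is spanned by monomials in $J_{n<0}, L_{m<-1}, G^\pm_{p<-1}$. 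Each such monomial is a simultaneous $J_0$- and $L_0$-eigenvector: $J_n$ shifts $J_0$-weight by $n\cdot 0=0$ except $G^\pm_p$ which shift it by $\pm 1$, and all negative modes strictly raise $L_0$-weight (by $-n\geq 1$), using $[L_0,X_n]=-nX_n$. This immediately yields the decomposition $M=\bigoplus_{a\in\xi+\Z,\ d\in\chi+\Z_{\geq0}} M_{a,d}$ with $M_{a,d}$ the joint eigenspace, and shows $\top M = M_\chi$.

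For the remaining two assertions I would argue as follows. The weight-$(\xi,\chi)$ space $M_{\xi,\chi}$: any vector of $L_0$-weight $\chi$ is in $\top M$, hence killed by all positive modes, and of $J_0$-weight $\xi$; since $M$ is a highest-weight module generated by $v$, the weight space $M_{\xi,\chi}$ is spanned by the single monomial (the empty one) applied to $v$, so $\dim M_{\xi,\chi}=1$. For $\top M = M_\chi$ being spanned by $(G^+_0)^i v$, $i\geq 0$: among the negative-mode generators, the only ones that do not strictly raise $L_0$-weight are absent ($G^+_p$ with $p\leq -2$ raises by $-p\geq 2$), but $G^+_0$ preserves $L_0$-weight and raises $J_0$-weight by $1$; hence the only way to stay in $M_\chi$ is to apply $G^+_0$'s to $v$ and nothing else. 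More precisely, using the commutation relations one checks that any weight-$\chi$ vector reduces, modulo terms that vanish by the annihilation relations, to a linear combination of $(G^+_0)^i v$. The main obstacle I anticipate is precisely this last reduction: one must carefully control, via the $[G^+_m,G^-_n]$, $[L_m,G^\pm_n]$, $[J_m,G^\pm_n]$ relations, that commuting a positive-mode annihilation operator or $G^-_0$ past a string of $G^+_0$'s produces only terms that are again annihilated by $v$'s relations (equivalently, that the ``extremal'' subspace $\mathrm{span}\{(G^+_0)^iv\}$ is closed and exhausts $M_\chi$); since $[G^+_0,G^-_0]$ is a nontrivial combination of $L_0$, $J_0$, $(J^2)_0$, $J$-cubics, this bookkeeping is the only genuinely computational part, and it is where the explicit OPEs of Sect.~\ref{sec:OPE} are indispensable.
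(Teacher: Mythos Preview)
Your overall strategy matches the paper's, but there is a concrete slip in your second paragraph that creates a phantom obstacle at the end. You import the mode bounds from the PBW basis \eqref{PBW} of the \emph{vacuum} module and assert that $M$ is spanned by monomials in $J_{n\leq -1}$, $L_{m\leq -2}$, $G^\pm_{p\leq -2}$ (together with $G^-_0$) applied to $v$. Those bounds hold only because $\vac$ satisfies extra annihilation relations ($L_{-1}\vac=G^\pm_{-1}\vac=G^\pm_0\vac=0$) that your $v$ does not; the correct creation operators for the Verma-type module on $v$ are $J_{n\leq -1}$, $L_{n\leq -1}$, $G^+_{n\leq 0}$, $G^-_{n\leq -1}$. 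In particular $G^+_0$ must be among them, or else your displayed identity $M=U(\W^{\leq})v$ forces $\top M=\C v$ and contradicts your own final paragraph.

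Once the bounds are fixed, the ``main obstacle'' you anticipate evaporates: the only ordered monomials in these creation modes with total $L_0$-shift $0$ are the powers $(G^+_0)^i$, so $\top M=\operatorname{span}\{(G^+_0)^i v\}$ and $\dim M_{\xi,\chi}=1$ drop out immediately---there is no need to push $G^-_0$'s through or track $[G^+_0,G^-_0]$ via the OPEs. The paper's own argument is more bare-handed still: it selects $v$ at the outset as a $J_0$-eigenvector of \emph{minimal} eigenvalue in the finite-dimensional $\top M$ (so $G^-_0v=0$ follows because $\xi-1$ is simply not in the $J_0$-spectrum of $\top M$), and then reads off the bigrading and the shape of $\top M$ directly from the relations $J_0 G^\pm_n v=(\xi\pm1)G^\pm_n v$, $L_0 G^\pm_n v=(\chi-n)G^\pm_n v$, never invoking PBW or Verma modules.
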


\begin{proof}
Since  $J_0$ and $L_0$ commute, the action of $J_0$ preserve each $M_{\chi+n}$, $n\in\Z_{\geq0}$. 
Moreover $J_0$ is semisimple over $M$ and each $M_n$, then $M$ can be written 
as 
$${M=\bigoplus\limits_{(a,d) \in \C^2 \atop d \in \chi+\Z_{\geq 0}} M_{a,d}}.$$
As $\top{M}$ is finite dimensional, there is an vector $v\in\top{M}$ such that $J_0v=\xi v$, $\xi\in\C$ and $\xi-n$ is not an eigenvalue of $J_0$ in $\top{M}$ for all $n\in\Z_{> 0}$.
The relations of the lemma result from the following equations. Let $n\in\Z$,
\begin{align*}
	&J_0J_nv=\xi J_nv,&&L_0J_nv=(\chi-n)J_nv,\\
	&J_0L_nv=\xi L_nv,&&L_0L_nv=(\chi-n)L_nv,\\
	&J_0G^\pm_nv=(\xi\pm1)G^\pm_nv,&&L_0G^\pm_nv=(\chi-n)G^\pm_nv.
\end{align*}
It ensues from those relations that all the eigenvalues of $J_0$ are in $\xi+\Z$. Hence,
$$M=\bigoplus\limits_{a\in \xi+\Z\atop d \in \chi+\Z_{\geq 0}} M_{a,d}.$$
We explain the relation $G^-_n v=0$, $n\geq 0$, the others are obtained similarly.
Since $M$ is a positive energy representation of $\WA$, for $n>0$, $\chi-n$ is not an eigenvalue of $L_0$ and $G^-_nv=0$. 
Besides, $G^-_0v\in\top{M}$ and the choice of $v$ implies that $\xi-1$ is not a eigenvalue of $J_0$ in $\top{M}$. 
Hence $G^-_0v=0$.
The vectors $(G^+_0)^i v $, $i\geq0$, are the only ones attached 
to the eigenvalue $\chi$ for $L_0$ and $J_0(G^+_0)^iv=(\xi+i)G^+_0v$ for $i\geq0$. As a consequence they span $\top{M}$ and $M_{\xi,\chi}=\C v$.
\end{proof}

For $(\xi,\chi) \in \C^2$, let $L(\xi,\chi)$ be the irreducible representation of $\W^k(\g,f)$ generated by a vector $v=|\xi,\chi\rangle$ 
satisfying the relations of Lemma \ref{Lem:existence}. 
According to Lemma \ref{Lem:existence}, $|\xi,\chi\rangle$ is uniquely defined up to nonzero scalar, 
so the notation is legitimate. 
Zhu's correspondence ensures that such $L(\xi,\chi)$ does exist and is unique up to isomorphism of $\W^k(\g,f)$-modules 
(see, for example, \cite{AM20}). 

Since $G^-_0G^+_0\vectop $ is in $\Lxichi_{\chi,\xi}$, it is proportional to $\vectop $, and we easily obtain that 
\begin{equation*}
	G^-_0G^+_0\vectop =g(\xi,\chi)\vectop ,
\end{equation*}
where 
\[g(\xi,\chi)=-\frac{1}{2}\xi\left(2+3k+k^2-2\xi^2+6\chi+2k\chi\right).\]
Hence for $i\geq1$,
\begin{equation}\label{G-G+i}
	G^-_0(G^+_0)^i\vectop =i\,h_i(\xi,\chi)(G^+_0)^{i-1}\vectop ,
\end{equation}
where 
\begin{align*} 
h_i(\xi,\chi) & =\frac{1}{i}\sum_{m=0}^{i-1}g(\xi+m,\chi) & \\
& = \frac{(2\xi+i-1)}{4}(-2-i+i^2-3k-k^2-2\xi+2i\xi+2\xi^2-6\chi-2k\chi). &
\end{align*}

\begin{Proposition}
Suppose that $\top{\Lxichi}$ is $n$-dimensional. Then $h_n(\xi,\chi)=0$.
\end{Proposition}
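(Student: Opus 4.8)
The plan is to read off the conclusion from the explicit structure of $\top{\Lxichi}$ provided by Lemma~\ref{Lem:existence} together with the relation~\eqref{G-G+i}.

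First I would recall that, by Lemma~\ref{Lem:existence}, the top component $\top{\Lxichi}$ is spanned by the vectors $(G^+_0)^i\vectop$, $i\geq 0$, and that each $(G^+_0)^i\vectop$ lies in the $J_0$-eigenspace for the eigenvalue $\xi+i$ (this follows from $[J_0,G^+_0]=G^+_0$ and $J_0\vectop=\xi\vectop$). In particular the nonzero vectors among the $(G^+_0)^i\vectop$ are automatically linearly independent, being eigenvectors for pairwise distinct eigenvalues of $J_0$.

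Next, because $G^+_0$ raises the $J_0$-weight by $1$, as soon as $(G^+_0)^m\vectop=0$ for some $m$ one also has $(G^+_0)^{m'}\vectop=0$ for all $m'\geq m$. Hence the family $\{(G^+_0)^i\vectop\}_{i\geq 0}$ is an initial segment $\vectop,\,G^+_0\vectop,\dots,(G^+_0)^{m-1}\vectop$ of linearly independent nonzero vectors followed only by zeros, so $\dim\top{\Lxichi}=m$. Since $\top{\Lxichi}\neq 0$ we have $n\geq 1$, and the hypothesis $\dim\top{\Lxichi}=n$ forces $(G^+_0)^{n-1}\vectop\neq 0$ while $(G^+_0)^n\vectop=0$.

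Finally I would substitute $i=n$ into~\eqref{G-G+i}:
\[
G^-_0(G^+_0)^n\vectop = n\,h_n(\xi,\chi)\,(G^+_0)^{n-1}\vectop .
\]
The left-hand side equals $G^-_0\cdot 0 = 0$, while on the right-hand side $n\neq 0$ and $(G^+_0)^{n-1}\vectop\neq 0$; therefore $h_n(\xi,\chi)=0$. There is no real obstacle here: the proof is a direct substitution into the already-established identity~\eqref{G-G+i}, and the only point needing a word of care is the claim that the nonzero powers $(G^+_0)^i\vectop$ are precisely those with $i<n$, which is immediate from the $J_0$-grading once one observes that a single vanishing power forces all later powers to vanish.
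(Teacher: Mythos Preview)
Your argument is correct and follows exactly the paper's approach: from $\dim\top{\Lxichi}=n$ deduce $(G^+_0)^{n-1}\vectop\neq 0$ and $(G^+_0)^n\vectop=0$, then plug into~\eqref{G-G+i}. You simply spell out in more detail why the top component has precisely the first $n$ powers nonzero, which the paper leaves implicit.
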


\begin{proof}
If $\dim\top{\Lxichi}=n$ then $(G^+_0)^n\vectop =0$ and $(G^+_0)^{n-1}\vectop \neq0$. 
It results from \eqref{G-G+i} that $h_n(\xi,\chi)=0$.
\end{proof}

Following the ideas of \cite{Ara13}, we introduce the twist-action $\psi$ (\cite{Li97}).
Let define
\begin{equation*}
	\Delta(-J,z) :=z^{-J_0}\exp\left(\sum_{m=1}^{\infty}(-1)^{m+1}\frac{-J_m}{mz^m}\right).
\end{equation*}
For $a\in \WA$,
\begin{equation*}
	\Delta(-J,z)a=z^{-J_0}\left(\sum_{n=0}^{\infty}\frac{X^n}{n!}a\right),
\end{equation*}
where $X=\sum\limits_{m=1}^{\infty}(-1)^{m+1}\frac{-J_m}{kz^m}$ and $z^{-J_0}$ is defined by $z^{-J_0}a=z^{-c}a$ if $J_0a=ca$.
Set
\begin{equation*}
\sum_{n\in\Z}\psi(a_{(n)})z^{-n-1}:=
Y(\Delta(-J,z)a,z)=\sum_{n=0}^{\infty}Y(z^{-J_0}\frac{X^n}{n!}a,z).
\end{equation*}
For any $\WA$-module $M$, the space $\psi(M)$ denotes the $\WA$-module obtained by twisting the action of $\WA$ as $a_{(n)}\mapsto\psi(a_{(n)})$. 
The following relations are obtained by applying the $\psi$-action to the generators of $\WA$:
\begin{align*}
\psi(J_n)&=J_n-(2+k)\indic{n,0},\\
\psi(L_n)&=L_n-J_n+\frac{(2+k)}{2}\indic{n,0},\\
\psi(G^+_n)&=G^+_{n-1},\\
\psi(G^-_n)&=G^-_{n+1}.
\end{align*}

\begin{Proposition}
Assume that $\dim\top{\Lxichi}=i$ and $\dim\top{\psi(\Lxichi)}=j$. Then
\begin{equation*}
\psi^2(\Lxichi)\simeq L(\xi+i+j-6-2k\,,\,\chi-2\xi-2i-j+7+2k).
\end{equation*}
\end{Proposition}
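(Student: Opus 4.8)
The plan is to reduce the statement to a single application of $\psi$ and then iterate. The one-step formula I would establish first is: \emph{if $\dim\top{\Lxichi}=i$, then $\psi(\Lxichi)\simeq L\!\left(\xi+i-3-k,\ \chi-\xi-i+2+\tfrac{k}{2}\right)$.} Granting this, set $\xi_1:=\xi+i-3-k$ and $\chi_1:=\chi-\xi-i+2+\tfrac{k}{2}$, so that $\psi(\Lxichi)\simeq L(\xi_1,\chi_1)$. By hypothesis $\dim\top{\psi(\Lxichi)}=j$, that is $\dim\top{L(\xi_1,\chi_1)}=j$, so the one-step formula applied to $L(\xi_1,\chi_1)$ (with $j$ in place of $i$) gives $\psi^2(\Lxichi)=\psi(L(\xi_1,\chi_1))\simeq L(\xi_2,\chi_2)$ with $\xi_2=\xi_1+j-3-k$ and $\chi_2=\chi_1-\xi_1-j+2+\tfrac{k}{2}$; substituting the values of $\xi_1$ and $\chi_1$ and simplifying yields $\xi_2=\xi+i+j-6-2k$ and $\chi_2=\chi-2\xi-2i-j+7+2k$, which is precisely the asserted isomorphism.

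For the one-step formula, let $v=\vectop$ be the cyclic vector of $M:=\Lxichi$. By Lemma~\ref{Lem:existence}, $\dim\top{M}=i$ means $\top{M}=\bigoplus_{\ell=0}^{i-1}\C\,(G^+_0)^{\ell}v$, so $w:=(G^+_0)^{i-1}v\neq 0$ while $(G^+_0)^{i}v=0$. I would then read off the action on $w$ of the modes of $\psi(M)$ from the $\psi$-action formulas displayed before the proposition. In $M$ the vector $(G^+_0)^{i-1}v$ has $L_0$-eigenvalue $\chi$ and $J_0$-eigenvalue $\xi+i-1$, so $\psi(J_0)=J_0-(2+k)$ and $\psi(L_0)=L_0-J_0+\tfrac{2+k}{2}$ show $w$ is a joint $J_0,L_0$-eigenvector in $\psi(M)$ with eigenvalues $\xi_1$ and $\chi_1$. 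Next, $\psi(J_n)=J_n$ and $\psi(L_n)=L_n-J_n$ kill $w$ for $n>0$, $\psi(G^+_n)=G^+_{n-1}$ kills $w$ for $n>0$, and $\psi(G^-_n)=G^-_{n+1}$ kills $w$ for $n\geq 0$: indeed every operator $J_m,L_m,G^\pm_m$ occurring here with $m\geq 1$ strictly lowers the original conformal weight and hence annihilates $w\in\top{M}$ by positive energy of $M$, and the one remaining case is $\psi(G^+_1)\,w=G^+_0\,w=(G^+_0)^{i}v$, which vanishes precisely because $\dim\top{M}=i$. Thus $w$ satisfies the relations of Lemma~\ref{Lem:existence} with parameters $(\xi_1,\chi_1)$.

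It remains to identify $\psi(M)$ with $L(\xi_1,\chi_1)$, and the one point that requires a little care is that $\psi(M)$ is irreducible. Since $J(z),L(z),G^{\pm}(z)$ strongly generate $\WA$, a subspace of a $\WA$-module is a submodule exactly when it is stable under all the modes of these three generators; and the explicit $\psi$-action formulas exhibit the operators $\psi(J_n),\psi(L_n),\psi(G^\pm_n)$ ($n\in\Z$) and the operators $J_n,L_n,G^\pm_n$ ($n\in\Z$) as spanning, once the identity is adjoined, the same subspace of $\End(M)$ (they differ only by a reindexing and by adding scalar operators). Hence $M$ and $\psi(M)$ have the same submodules, so $\psi(M)$ is irreducible because $M$ is. As $w\neq 0$, it generates $\psi(M)$, which is therefore an irreducible $\WA$-module generated by a vector satisfying the relations of Lemma~\ref{Lem:existence} with parameters $(\xi_1,\chi_1)$; by the uniqueness part of Zhu's correspondence --- i.e. by the very definition of $L(\xi_1,\chi_1)$ --- we conclude $\psi(M)\simeq L(\xi_1,\chi_1)$. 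This proves the one-step formula, and the iteration in the first paragraph then proves the proposition. The computations involved (the modes on $w$, the arithmetic of the iteration) are routine; the substantive point is the irreducibility/identification step, where the triangular shape of the $\psi$-action and Zhu's theorem are exactly what make it work.
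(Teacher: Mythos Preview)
Your proof is correct and follows essentially the same approach as the paper: identify $w=(G^+_0)^{i-1}\vectop$ as the new highest-weight vector for $\psi(\Lxichi)$, read off its $(\psi(J_0),\psi(L_0))$-eigenvalues, and iterate. The paper's proof is terser and tacitly uses that Li's twist preserves irreducibility, whereas you supply this explicitly (via the observation that the twisted and untwisted generator modes span, with the identity, the same subspace of $\End(M)$) and verify the annihilation relations of Lemma~\ref{Lem:existence} in full; this is a welcome elaboration but not a different method.
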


\begin{proof}
For all $m\geq0$ we have
\begin{align*}
\psi(J_0)(G^+_0)^m\vectop &=(\xi+m-(2+k))(G^+_0)^m\vectop ,\\
\psi(L_0)(G^+_0)^m\vectop &=(\chi-(\xi+m)+\frac{(2+k)}{2})(G^+_0)^m\vectop .
\end{align*}
Since the smallest eigenvalue associated with the $\psi(L_0)$-action is attached to the vector $(G^+_0)^{i-1}\vectop $, we get 
\begin{equation*}
\psi(\Lxichi)\simeq L(\xi+(i-1)-(2+k)\,,\,\chi-\xi-(i-1)+\frac{(2+k)}{2})
\end{equation*}
and
\begin{align*}
\psi^2(\Lxichi)&\simeq\psi(L(\xi+(i-1)-(2+k)\,,\,\chi-\xi-(i-1)+\frac{(2+k)}{2}))\\
&\simeq L(\xi+(i-1)+(j-1)-2(2+k)\,,\,\chi-2\xi-2(i-1)-(j-1)+2(2+k)).
\end{align*}
\end{proof}

\begin{Remark}
For all $m,n\in\Z_{\geq 0}$,
\begin{align*}
\psi^2(J_0)(G^+_{-1})^m(G^+_0)^n\vectop &=(\xi+n+m-2(2+k))(G^+_{-1})^m(G^+_0)^n\vectop ,\\
\psi^2(L_0)(G^+_{-1})^m(G^+_0)^n\vectop &=(\chi-2\xi-2n-m+2(2+k))(G^+_{-1})^m(G^+_0)^n\vectop .
\end{align*}
\end{Remark}

\begin{Proposition}\label{2.4}
	Suppose that $\dim\top{\Lxichi}=i$, $\dim\top{\psi(\Lxichi)}=j$ and ${\dim\top{\psi^2(\Lxichi)}=l}$.
		\begin{enumerate}[font=\upshape,label=(\alph*)]
		\item \label{2.4P} 
		If $k=-3+p/3$ with $\gcd{p}{3}=1$, $p\geq3$ 
		then $(\xi,\chi,l)=(\xi^{(s)}_{i,j},\chi^{(s)}_{i,j},l^{(s)}_{i,j})$ 
		with $s \in \{1,2,3\}$, where
		\end{enumerate}
		\begin{gather*}
			\xi^{(1)}_{i,j}=\frac{1-i}{2},\quad \chi^{(1)}_{i,j}=\frac{13-6i+i^2-12j+2ij+2j^2+6k-2ik-4jk}{4(3+k)},\\ 
			l^{(1)}_{i,j}=9-i-j+3k,\\
			\xi^{(2)}_{i,j}=\frac{7-2i-j+2k}{2},\quad \chi^{(2)}_{i,j}=\frac{31-12i+2i^2-12j+2ij+j^2+18k -4ik-4jk+2k^2}{4(3+k)},\\ 
			l^{(2)}_{i,j}=i,\\
			\xi^{(3)}_{i,j}=\frac{4-i-j+k}{2},\quad \chi^{(3)}_{i,j}=\frac{4+i^2-6j+j^2-2jk-k^2}{4(3+k)},\\ 
			l^{(3)}_{i,j}=9-i-j+3k.
		\end{gather*}
	 	\begin{enumerate}[font=\upshape,label=(\alph*),resume]
		\item \label{2.4CP} If $k=-3+p/4$ 
		with $\gcd{p}{2}=1$, $p\geq4$ then $(\xi,\chi,l)=(\xi^{(s')}_{i,j},\chi^{(s')}_{i,j},l^{(s')}_{i,j})$ 
		with $s \in \{1,2\}$, where
		\end{enumerate}
		\begin{gather*}
			\xi^{(1')}_{i,j}=\frac{1-i}{2},\quad 
			\chi^{(1')}_{i,j}=\frac{13-6i+i^2-12j+2ij+2j^2+6k-2ik-4jk}{4(3+k)},\\ 
			l^{(1')}_{i,j}=12-i-2j+4k,\\
			\xi^{(2')}_{i,j}=\frac{7-2i-j+2k}{2},\quad 
			\chi^{(2')}_{i,j}=\frac{31-12i+2i^2-12j+2ij+j^2+18k-4ik-4jk+2k^2}{4(3+k)},\\ 
			l^{(2')}_{i,j}=i.
		\end{gather*}
\end{Proposition}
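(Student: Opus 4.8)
The plan is to feed the two preceding Propositions into each other so as to turn the three finiteness hypotheses into explicit polynomial equations in $(\xi,\chi)$ and in $l$, and then to solve; the one delicate point will be a minimality/integrality analysis that pins down $l$ and discards spurious solutions, and this is where the coprimality hypothesis (with $q\in\{3,4\}$) is used. To set up the equations I write $h_n(\xi,\chi)=\tfrac14(2\xi+n-1)\,Q_n(\xi,\chi)$ with $Q_n(\xi,\chi)=n^2+(2\xi-1)n+2\xi^2-2\xi-2-3k-k^2-2(3+k)\chi$, and let $\phi_n(\xi)$ be the unique solution of $Q_n(\xi,\phi_n(\xi))=0$. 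By the Proposition immediately above, $\dim\top{\Lxichi}=i$ forces $h_i(\xi,\chi)=0$, i.e.\ either $(\mathrm A)$ $\xi=\tfrac{1-i}{2}$ or $(\mathrm B)$ $\chi=\phi_i(\xi)$. The proof of the Proposition computing $\psi^2(\Lxichi)$ shows $\psi(\Lxichi)\simeq L(\tilde\xi,\tilde\chi)$ with $\tilde\xi=\xi+(i-1)-(2+k)$ and $\tilde\chi=\chi-\xi-(i-1)+\tfrac{2+k}{2}$; applying the same reasoning to $\psi(\Lxichi)$, whose top is $j$-dimensional, gives $h_j(\tilde\xi,\tilde\chi)=0$, i.e.\ either $(\mathrm A')$ $\tilde\xi=\tfrac{1-j}{2}$, equivalently $\xi=\tfrac{7-2i-j+2k}{2}$, or $(\mathrm B')$ $\tilde\chi=\phi_j(\tilde\xi)$.

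Next I would run through the four combinations. $(\mathrm A,\mathrm A')$ forces $i+j=6+2k=\tfrac{2p}{q}$, which is impossible since the right-hand side is not an integer. $(\mathrm A,\mathrm B')$ gives $\xi=\tfrac{1-i}{2}$, and substituting into $\tilde\chi=\phi_j(\tilde\xi)$ yields $\chi=\chi^{(1)}_{i,j}$. $(\mathrm B,\mathrm A')$ gives $\xi=\tfrac{7-2i-j+2k}{2}$ and $\chi=\phi_i(\xi)=\chi^{(2)}_{i,j}$. For $(\mathrm B,\mathrm B')$ I eliminate $\chi$ between $\chi=\phi_i(\xi)$ and $\tilde\chi=\phi_j(\tilde\xi)$: since $\phi_i$ and $\phi_j$ have the same leading coefficient $\tfrac1{3+k}$ in their argument and $\tilde\xi$ is affine in $\xi$, the quadratic terms cancel, leaving a linear equation with leading coefficient $2(3+k-i-j)\ne0$ (because $3+k=\tfrac pq\notin\Z$), whence $\xi=\tfrac{4-i-j+k}{2}$ and $\chi=\phi_i(\xi)=\chi^{(3)}_{i,j}$. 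So $(\xi,\chi)$ must be one of the three pairs $(\xi^{(s)}_{i,j},\chi^{(s)}_{i,j})$.

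It then remains to determine $l$ and to see why $(\mathrm B,\mathrm B')$ is excluded when $q=4$. For each surviving pair the Proposition on $\psi^2$ gives $\psi^2(\Lxichi)\simeq L(\xi'',\chi'')$ with $\xi''=\xi+i+j-6-2k$, $\chi''=\chi-2\xi-2i-j+7+2k$. By Lemma~\ref{Lem:existence} and irreducibility, $\dim\top{L(a,b)}=\min\{n\in\Z_{>0}\colon h_n(a,b)=0\}$ (with $\min\emptyset=\infty$), so the hypothesis $\dim\top{\psi^2(\Lxichi)}=l$ means exactly that this minimum equals $l$. The roots in $n$ of $h_n(\xi'',\chi'')=0$ are $n=1-2\xi''$ together with the two roots of $Q_n(\xi'',\chi'')=0$; after substituting the explicit pair the discriminant of $Q_n(\xi'',\chi'')$ is a perfect square (e.g.\ $(i-j-k-3)^2$ in case $(\mathrm B,\mathrm B')$), so these three roots are linear in $i,j,k$, and I evaluate them at $k=-h^\vee+p/q$:
\begin{itemize}
\item $(\mathrm B,\mathrm A')$: the smallest positive integral root is $i$ for both $q$, so $l^{(2)}_{i,j}=l^{(2')}_{i,j}=i$.
\item $(\mathrm A,\mathrm B')$: the root $n=1-2\xi''=12-i-2j+4k$ is a positive integer only for $q=4$ (where it equals $p-i-2j$), so $l^{(1')}_{i,j}=12-i-2j+4k$; for $q=3$ this root is non-integral and the smallest integral root comes from $Q_n(\xi'',\chi'')$, namely $l^{(1)}_{i,j}=9-i-j+3k$.
\item $(\mathrm B,\mathrm B')$: for $q=3$ the three roots are $p-i-j,\ \tfrac p3-j,\ \tfrac{2p}3-i$, only the first being an integer, so $l^{(3)}_{i,j}=p-i-j=9-i-j+3k$; for $q=4$ the three roots $\tfrac{3p}4-i-j,\ \tfrac p4-j,\ \tfrac p2-i$ are \emph{all} non-integral because $\gcd{p}{2}=1$, hence $\dim\top{\psi^2(\Lxichi)}=\infty$, contradicting the hypothesis, so $(\mathrm B,\mathrm B')$ cannot occur when $q=4$.
\end{itemize}
This produces exactly the three triples of part (a) and the two of part (b).

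The computation of $\phi_n$, the three eliminations, and the roots of $Q_n(\xi'',\chi'')$ are long but mechanical, and can be carried out in Mathematica alongside the OPE computations of Section~\ref{sec:OPE}; the real content, and the step I expect to be the main obstacle, is the last one: once $(\xi,\chi)$ is fixed the equation $h_n(\xi'',\chi'')=0$ has several candidate roots in $n$, and correctly deciding which one is the actual top dimension — and, for $q=4$, that in case $(\mathrm B,\mathrm B')$ none of them is a positive integer — is precisely where the arithmetic of the admissible level ($\gcd{p}{3}=1$, resp.\ $\gcd{p}{2}=1$) is essential.
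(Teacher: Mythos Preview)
Your proof is correct and follows the paper's approach exactly: impose $h_i(\xi,\chi)=h_j(\tilde\xi,\tilde\chi)=h_l(\xi'',\chi'')=0$, solve (the paper simply reports nine triples, presumably via Mathematica, where you carry out the $2\times2$ factorization by hand), and keep only those with $l$ a positive integer. The one addition on your side is the minimality claim $l=\min\{n>0:h_n(\xi'',\chi'')=0\}$; this is true (simplicity of the top as a Zhu-module forces it, since otherwise $(G_0^+)^m\vectop$ would generate a proper submodule under $G_0^\pm$), but it is not actually needed here---your own arithmetic already shows that in each surviving case at most one of the three candidate roots is a positive integer, which is exactly how the paper's filtering works.
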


\begin{proof}
By solving the system of equations
\begin{equation*}
\left\{\begin{aligned}
&h_i(\xi,\chi)=0,\\
&h_j(\xi+(i-1)-(2+k),\chi-\xi-(i-1)+\frac{(2+k)}{2})=0,\\
&h_l(\xi+(i-1)+(j-1)-2(2+k),\chi-2\xi-2(i-1)-(j-1)+2(2+k))=0.
\end{aligned}\right.
\end{equation*}
we find nine triples $(\xi,\chi,l)$ in term of $i$, $j$ and~$k$. Since $l$ is the dimension of  
${\top{\psi^2(\Lxichi)}}$, it must be a positive integer. If $k=-3+p/3$, with $\gcd{p}{3}=1$, $p\geq3$, the three triples described in the first part of the proposition are the only ones among the solutions of the system corresponding to this restrictive condition. Similarly, if $k=-3+p/4$, $\gcd{p}{2}=1$, $p\geq4$, we find that only two triples verify the condition.
\end{proof} 

\begin{Proposition}\label{2.5}
\begin{enumerate}[font=\upshape,label=(\alph*)]
\item Let $k=-3+p/3$ with $\gcd{p}{3}=1$, $p\geq3$ then $(G^+_{-2})^{p-2}\vac$ belongs to the maximal ideal of $\WA$.
\item Let $k=-3+p/4$ with $\gcd{p}{2}=1$, $p\geq4$ then $(G^+_{-2})^{p-3}\vac$ belongs to the maximal ideal of $\WA$.
\end{enumerate}
\end{Proposition}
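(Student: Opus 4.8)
Since $\SWA=\W_k(\g,f)$ is defined as the simple graded quotient of $\WA$, the kernel of the projection $\pi\colon\WA\twoheadrightarrow\SWA$ is precisely the maximal ideal of $\WA$. So the plan is to prove that $(G^+_{-2})^{p-2}\vac$ (in case (a)), resp.\ $(G^+_{-2})^{p-3}\vac$ (in case (b)), maps to $0$ in $\SWA$. First note that for the levels $k$ of the statement $\SWA$ is lisse, hence $C_2$-cofinite, by Proposition~\ref{Pro:associated_variety_W-algebra}: $k=-3+p/q$ with $q=3$ (resp.\ $q=4$) is admissible and $f=f_{subreg}\in\O_q$. Viewed as a module over itself, $\SWA$ is an irreducible positive energy $\WA$-module with one-dimensional top $\C\vac$; thus $\SWA\cong L(0,0)$ and $\dim\top{\SWA}=1$.

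\textbf{The twisted module $\psi^2(\SWA)$.} Next I would analyse $\psi^2(\SWA)$, which is again an irreducible $\WA$-module ($\psi$ preserving irreducibility, cf.~\cite{Li97}), using $\psi^2(J_n)=J_n-2(2+k)\indic{n,0}$, $\psi^2(L_n)=L_n-2J_n+2(2+k)\indic{n,0}$, $\psi^2(G^\pm_n)=G^\pm_{n\mp2}$. A direct bookkeeping on the PBW basis~\eqref{PBW} with conformal weight $d$ and $J_0$-eigenvalue $c$ gives $d-2c\geq0$ on every homogeneous vector of $\WA$, with equality exactly on $\mathrm{span}\{(G^+_{-2})^i\vac : i\geq0\}$. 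Hence $\psi^2(L_0)=L_0-2J_0+2(2+k)$ is bounded below on $\SWA$ by $2(2+k)$, attained precisely on the image of that span, and the same estimate shows $C_2(\SWA)$ lies in $\psi^2(L_0)$-eigenspaces of eigenvalue $>2(2+k)$. Since $\SWA$ is $C_2$-cofinite, $(G^+_{-2})^i\vac\in C_2(\SWA)$ for $i\gg0$, and being of minimal $\psi^2(L_0)$-weight they must then vanish in $\SWA$; in particular $\top{\psi^2(\SWA)}$ is finite-dimensional. One checks directly that $\vac$ is annihilated by $\psi^2(J_n),\psi^2(L_n),\psi^2(G^+_n)$ for $n>0$ and by $\psi^2(G^-_n)$ for $n\geq0$, so by Lemma~\ref{Lem:existence} it is the highest weight vector of $\psi^2(\SWA)$, the raising operator being $\psi^2(G^+_0)=G^+_{-2}$; thus $\psi^2(\SWA)\cong L(\xi'',\chi'')$ with $(\xi'',\chi'')=(-2(2+k),2(2+k))$ and $\top{\psi^2(\SWA)}$ is spanned by the images of $(G^+_{-2})^i\vac$, $i\geq0$. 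As these have pairwise distinct $J_0$-eigenvalues, $\dim\top{\psi^2(\SWA)}$ is exactly the least $N$ with $(G^+_{-2})^{N}\vac=0$ in $\SWA$.

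\textbf{Pinning down the power.} Finally I would feed $\Lxichi=\SWA$ (so $i=1$, $\xi=\chi=0$) into Proposition~\ref{2.4}. In case (a), $k=-3+p/3$: the conditions $\xi^{(s)}_{1,j}=0=\chi^{(s)}_{1,j}$ force $s=1$ and $j=1$, since the competing values $j\in\{4+2k,\,5+2k,\,3+k\}$ equal $\tfrac{2(p-3)}{3},\tfrac{2p-3}{3},\tfrac{p}{3}$ and none is a positive integer when $\gcd{p}{3}=1$. Hence $\dim\top{\psi^2(\SWA)}=l^{(1)}_{1,1}=7+3k=p-2$, so by the previous step $(G^+_{-2})^{p-2}\vac=0$ in $\SWA$, i.e.\ it lies in the maximal ideal of $\WA$. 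Case (b), $k=-3+p/4$, is identical: $\gcd{p}{2}=1$ rules out every branch but $s=1$, $j=1$, giving $\dim\top{\psi^2(\SWA)}=l^{(1')}_{1,1}=9+4k=p-3$ and $(G^+_{-2})^{p-3}\vac=0$ in $\SWA$.

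\textbf{Main obstacle.} The delicate point is the second step: one has to make sure that the Li-twisted module $\psi^2(\SWA)$ is still a positive energy representation with finite-dimensional top (so that Lemma~\ref{Lem:existence} and Proposition~\ref{2.4} genuinely apply), and to set up correctly the dictionary identifying the highest weight vector of $\psi^2(\SWA)\cong L(\xi'',\chi'')$ with $\vac\in\SWA$ and the mode $G^+_0$ with $\psi^2(G^+_0)=G^+_{-2}$. Once this is in place, the conclusion is just the arithmetic already carried out in Proposition~\ref{2.4}.
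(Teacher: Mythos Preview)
Your approach is essentially the paper's: identify $\SWA\cong L(0,0)$, pass to the twist $\psi^2(\SWA)\cong L(-2(2+k),2(2+k))$, bound the dimension of its top, and read off the vanishing of $(G^+_{-2})^N\vac$ via $\psi^2(G^+_0)=G^+_{-2}$. You in fact supply more justification than the paper does for the positive energy property of $\psi^2(\SWA)$ (your PBW estimate $d-2c\geq0$).

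The only point where you take a slightly longer route is the last step. The paper does not feed $(\xi,\chi)=(0,0)$ into Proposition~\ref{2.4} to solve for $s,j$; it simply checks directly that $h_{p-2}\bigl(-2(2+k),\,2(2+k)\bigr)=0$ (resp.\ $h_{p-3}$ in case~(b)), which by \eqref{G-G+i} and simplicity forces $(\psi^2(G^+_0))^{p-2}\vac=0$. This shortcut avoids any hypothesis on $\psi(\SWA)$. Your use of Proposition~\ref{2.4} is fine, but note that the proposition also assumes $j=\dim\top{\psi(\SWA)}<\infty$, which you did not address; the same PBW bookkeeping with $d-c\geq0$ (equality only on $\C\vac$) fills this gap and in fact shows $j=1$ outright.
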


\begin{proof}
\ref{2.4P} For $i=j=1$, we have $l^{(1)}_{1,1}=p-2$. 
Since $\xi^{(1)}_{1,1}=\chi^{(1)}_{1,1}=0$ and $L_0\vac=J_0\vac=0$ the correspondence $\vac \mapsto {\vacij{1,1}{1}}$  yields the isomorphism 
\[\SWA\simeq \Lxichiij{1,1}{1}.\]
Moreover $\top{\psi^2(\W_k(\g,f))}$ is at most $p-2$ dimensional because 
\[h_{p-2}(-2(2+k),2(2+k))=0.\]
Hence $(G^+_{-2})^{p-2}\vac=\psi^2((G^+_0)^{p-2})\vac=0$.
	
\ref{2.4CP} The argument is the same as in the previous case with $l^{(1')}_{1,1}=p-3$ and ${\xi^{(1')}_{1,1}=\chi^{(1')}_{1,1}=0}$. 
\end{proof}

Assume that either $k=-3+p/3$, with $\gcd{p}{3}=1$, $p\geq3$, or $k=-3+p/4$, with $\gcd{p}{2}=1$, $p\geq4$. 
Then $k$ is admissible for $\g$ and by our choice of $f=f_{subreg}$, we have $f \in \mathbb{O}_q$, with $q$ the denominator of $k+3$.  
So by Proposition \ref{Pro:associated_variety_W-algebra}, the simple quotient $\SWA$ of $\WA$ is lisse.  
As a consequence, any simple $\SWA$-module is positively graded with finite dimensional 
top degree component and so is of the form $L(\xi,\chi)$ for some $(\xi,\chi)\in\C^2$. 

We are now in a position to state the main result of this section. 

\begin{Proposition}\label{2.7}
Let $M$ be a simple $\SWA$-module.
\begin{enumerate}[font=\upshape,label=(\alph*)]
\item \label{2.7P} If $k=-3+p/3$ with $\gcd{p}{3}=1$, $p\geq3$,  
then the $\SWA$-module $M$ is isomorphic to $\Lxichiij{i,j}{s}$ for $\xi^{(s)}_{i,j}$ and $\chi^{(s)}_{i,j}$ as in 
Proposition \ref{2.4}\ref{2.4P} with $1\leq i\leq p-2$, $1\leq j\leq p-i-1$ and $s\in\{1,2,3\}$.
\item \label{2.7CP} If $k=-3+p/4$ with $\gcd{p}{2}=1$, $p\geq4$,  
then the $\SWA$-module $M$ is isomorphic to $\Lxichiij{i,j}{s'}$ for $\xi^{(s')}_{i,j}$ and $\chi^{(s')}_{i,j}$ as in 
Proposition \ref{2.4}\ref{2.4CP} with $1\leq i\leq p-3$ and $1\leq j\leq (p-i-1)/2$ if $s=1$ or $1\leq i\leq p-3$ and $1\leq j\leq p-2i-1$ if $s=2$.
\end{enumerate}	
\end{Proposition}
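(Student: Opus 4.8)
The plan is to pin down every simple $\SWA$-module as one of the modules $\Lxichi$ introduced in Section~\ref{sec:twist}, use Proposition~\ref{2.4} to express $(\xi,\chi)$ through the three integers $i=\dim\top{\Lxichi}$, $j=\dim\top{\psi(\Lxichi)}$ and $l=\dim\top{\psi^2(\Lxichi)}$, and then extract the asserted ranges for $i$ and $j$ from positivity of these dimensions together with the way the twist $\psi$ moves between the families of candidates.

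As recalled just before the statement, a simple $\SWA$-module $M$ is a positive energy representation with finite-dimensional top component, so $M\cong\Lxichi$ for a unique $(\xi,\chi)\in\C^2$ by Lemma~\ref{Lem:existence}. Since the twist functor $\psi$ (Li~\cite{Li97}) sends simple $\SWA$-modules to simple $\SWA$-modules, the modules $\psi(M)$, $\psi^2(M)$, $\psi^3(M)$ are again simple; hence $i:=\dim\top{M}$, $j:=\dim\top{\psi(M)}$, $l:=\dim\top{\psi^2(M)}$ and $\dim\top{\psi^3(M)}$ are all positive integers, and Proposition~\ref{2.4} supplies a family index $s$ with $(\xi,\chi,l)=(\xi^{(s)}_{i,j},\chi^{(s)}_{i,j},l^{(s)}_{i,j})$.

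Next I would split on $s$. Substituting $k=-3+p/3$ one gets $l^{(1)}_{i,j}=l^{(3)}_{i,j}=p-i-j$, and substituting $k=-3+p/4$ one gets $l^{(1')}_{i,j}=p-i-2j$; in all these cases $l\geq1$ immediately forces $i+j\leq p-1$, resp.\ $i+2j\leq p-1$, which together with $i,j\geq1$ is exactly the claimed range for the corresponding family. For the remaining families $s=2$ (when $k=-3+p/3$) and $s=2'$ (when $k=-3+p/4$) one has $l^{(s)}_{i,j}=i$, which gives no new bound, so here I would apply the argument one more twist. Using the formula for $\psi(\Lxichi)$ from Section~\ref{sec:twist} together with $\xi^{(2)}_{i,j}=\xi^{(2')}_{i,j}=\tfrac{7-2i-j+2k}{2}$, a short computation gives
\[
\xi_{\psi(M)}=\xi^{(2)}_{i,j}+(i-1)-(2+k)=\frac{1-j}{2}.
\]
On the other hand $\dim\top{\psi(M)}=j$ and $\dim\top{\psi^2(M)}=l=i$, so Proposition~\ref{2.4} applied to $\psi(M)$ gives $\xi_{\psi(M)}=\xi^{(s')}_{j,i}$ for some family index $s'$. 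The possibility $\xi^{(2)}_{j,i}=\tfrac{1-j}{2}$ would force $i+j=6+2k$, i.e.\ $i+j=2p/3$ resp.\ $p/2$, and the possibility $\xi^{(3)}_{j,i}=\tfrac{1-j}{2}$ (only present when $k=-3+p/3$) would force $i=3+k=p/3$; both are impossible since $i,j\in\Z$ while $\gcd{p}{3}=1$, resp.\ $\gcd{p}{2}=1$. Hence $s'=1$, resp.\ $1'$, with parameters $(j,i)$, and Proposition~\ref{2.4} applied to $\psi(M)$ then yields $\dim\top{\psi^3(M)}=l^{(1)}_{j,i}=p-i-j$, resp.\ $l^{(1')}_{j,i}=p-2i-j$, which being the dimension of a nonzero space is $\geq1$; this is the inequality that was missing, and the proof is complete.

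The crux, I expect, is this last point — bounding the ``second'' family by showing that $\psi$ carries it into the ``first'' family. It rests on the explicit $\psi$-action on the strong generators $J,G^{\pm},L$ from Section~\ref{sec:twist} and, essentially, on the arithmetic fact that $6+2k$ and $3+k$ fail to be integers, which is exactly where the coprimality hypotheses are used. One could instead obtain the uniform bound $i\leq p-2$, resp.\ $i\leq p-3$, by twisting the relation of Proposition~\ref{2.5} by $\psi^{-2}$ (turning $(G^+_{-2})^{p-2}\vac=0$, resp.\ $(G^+_{-2})^{p-3}\vac=0$, into $(G^+_0)^{p-2}=0$, resp.\ $(G^+_0)^{p-3}=0$, on every simple module), but the bound on $j$ for the second family still needs the argument above; everything else is a routine substitution of $k=-3+p/3$ or $k=-3+p/4$ into the formulas of Proposition~\ref{2.4}.
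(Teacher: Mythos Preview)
Your argument is correct and reaches the same conclusion by a route that differs in one essential respect from the paper's. The paper first invokes Proposition~\ref{2.5} to deduce $(G^+_0)^{p-2}\vectop=0$ (resp.\ $(G^+_0)^{p-3}\vectop=0$), giving the uniform bound $\dim\top{N}\leq p-2$ (resp.\ $p-3$) for every simple $\SWA$-module $N$ and hence $1\leq i,j\leq p-2$ a priori; it then computes how $\psi^2$ permutes the families---cyclically $(1)\to(2)\to(3)\to(1)$ when $k=-3+p/3$, and within each family when $k=-3+p/4$---and reads off the inequality $j\leq p-i-1$ (resp.\ the two inequalities in (b)) from the positivity of the new parameters $l,m$ of $\psi^2(M)$. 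You bypass Proposition~\ref{2.5} altogether: for $s\in\{1,3,1'\}$ the single constraint $l^{(s)}_{i,j}\geq1$ already gives both the bound on $i$ (via $j\geq1$) and the bound on $j$; for $s\in\{2,2'\}$ you twist once, use the arithmetic of $k$ (that $2p/3$, $p/3$, $p/2$ are non-integers) to force $\psi(M)$ into the first family with parameters $(j,i)$, and then extract the missing inequality from $l^{(1)}_{j,i}\geq1$ (resp.\ $l^{(1')}_{j,i}\geq1$). Your approach is more self-contained in that the null vector of Proposition~\ref{2.5} is never needed, and it even yields the slightly sharper $i\leq(p-2)/2$ in case~(b) for $s=2'$; the paper's approach is more uniform across families, handling all $s$ at once via the $\psi^2$-permutation, at the cost of that one extra ingredient.
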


\begin{proof}
\ref{2.7P} Since $M$ is a simple $\SWA$-module, there exist $\xi,\chi\in\C$ such that $M \cong \Lxichi$.  
By Proposition \ref{2.5}, $G^+(z)^{p-2}=0$. 
Hence 
\[\NO{G^+(z)^{p-2}}\overset{\text{def}}{=}\sum_{n\in\Z}((G^+)^{p-2})_nz^{-n-2}=0.\]
In particular $(G^+_0)^{p-2}\vectop =((G^+)^{p-2})_0\vectop =0$. 
As a consequence $\top{\Lxichi}$ is at most $(p-2)$-dimensional.
By Proposition \ref{2.4}\ref{2.4P}, since $\psi^2(\Lxichi)$ is a simple $\SWA$-module, there exist $1\leq i,j\leq p-2$ and $s\in\{1,2,3\}$ such that $\xi=\xi^{(s)}_{i,j}$ and $\chi=\chi^{(s)}_{i,j}$. 
In the same way, $\psi^4(\Lxichi)$ is a simple $\SWA$-module and there are $1\leq l,m\leq p-2$ and $r\in\{1,2,3\}$ such that $\psi^2(\xi):=\xi+(i-1)+(j-1)-2(2+k)=\xi^{(r)}_{l,m}$ and $\psi^2(\chi):=\chi-2\xi-2(i-1)-(j-1)+2(2+k)=\chi^{(r)}_{l,m}$. 
The $\psi^2$-action permutes the three forms of the eigenvalues $\xi$ and $\chi$:
\begin{align*}
\psi^2(\Lxichiij{i,j}{1})&\simeq\Lxichiij{p-i-j,i}{2},\\
\psi^2(\Lxichiij{i,j}{2})&\simeq\Lxichiij{i,p-i-j}{3},\\
\psi^2(\Lxichiij{i,j}{3})&\simeq\Lxichiij{p-i-j,j}{1}.
\end{align*}
The condition $j\leq p-i-1$ comes from $1\leq l,m\leq p-2$.
		
\ref{2.7CP} The argument is quite similar. 
By Proposition \ref{2.4}\ref{2.4CP}, $G^+(z)^{p-3}=0$ and $\top{\Lxichi}$ is at most $(p-3)$-dimensional. 
Moreover since $\psi^2(\Lxichi)$ and $\psi^4(\Lxichi)$ are simple $\SWA$-modules, 
$\xi=\xi^{(s')}_{i,j}$, $\chi=\chi^{(s')}_{i,j}$, $\psi^2(\xi)=\xi^{(r')}_{l,m}$ and $\psi^2(\chi)=\chi^{(r')}_{l,m}$ 
with $1\leq i,j,l,m\leq p-3$, $r,s\in\{1,2\}$. 
On the contrary of the first case, the $\psi^2$-action preserves the form of the eigenvalues $\xi$ and $\chi$:
\begin{align*}
\psi^2(\Lxichiij{i,j}{1'})&\simeq\Lxichiij{p-i-2j,j}{1'},\\
\psi^2(\Lxichiij{i,j}{2'})&\simeq\Lxichiij{i,p-2i-j}{2'}.
\end{align*}
If $s=1$ then the condition $p-i-2j\geq1$ implies $j\leq\frac{p-i-1}{2}$, 
and if $s=2$, with the same argument we get $j\leq p-2i-1$.
\end{proof}

\begin{Remark}
The simple $\SWA$-modules $\Lxichiij{i,j}{s}$ of Proposition~\ref{2.7} are all mutually non-isomorphic since their highest weights are distinct.
\end{Remark}

\begin{Remark}
For $k=-3+p/3$, with $\gcd{p}{3}=1$, $p\geq3$, or $k=-3+p/4$, with $\gcd{p}{2}=1$, $p\geq4$, the application $\psi$ is a bijection of the set of the simple $\SWA$-modules $\Lxichiij{i,j}{s}$ described in Proposition~\ref{2.7} over itself of inverse $\psi^5$ if $k$ is principal admissible, and $\psi^3$ otherwise. 
We describe below the $\Lxichiij{i,j}{s}$ orbits under the $\psi$-action:
	\begin{itemize}
		\item if $k=-3+p/3$ with $(p,3)=1$, $p\geq3$ then
	\end{itemize}
		\begin{gather*}
		L(\xi^{(1)}_{i,j},\chi^{(1)}_{i,j})\overset{\psi}{\to} L(\xi^{(3)}_{j,p-i-j},\chi^{(3)}_{j,p-i-j})\overset{\psi}{\to} L(\xi^{(2)}_{p-i-j,i},\chi^{(2)}_{p-i-j,i})\overset{\psi}{\to} L(\xi^{(1)}_{i,p-i-j},\chi^{(1)}_{i,p-i-j})\\ \overset{\psi}{\to} L(\xi^{(3)}_{p-i-j,j},\chi^{(3)}_{p-i-j,j})\overset{\psi}{\to} L(\xi^{(2)}_{j,i},\chi^{(2)}_{j,i}))\overset{\psi}{\to} L(\xi^{(1)}_{i,j},\chi^{(1)}_{i,j}).
		\end{gather*}
	\begin{itemize}
		\item if $k=-3+p/4$ with $(p,2)=1$, $p\geq4$ then
	\end{itemize}
		\begin{gather*}
		L(\xi^{(1')}_{i,j},\chi^{(1')}_{i,j})\overset{\psi}{\to} L(\xi^{(2')}_{j,p-i-2j},\chi^{(2')}_{j,p-i-2j})\overset{\psi}{\to} L(\xi^{(1')}_{p-i-2j,j},\chi^{(1')}_{p-i-2j,j})\\ 
		\overset{\psi}{\to} L(\xi^{(2')}_{j,i},\chi^{(2')}_{j,i})\overset{\psi}{\to} L(\xi^{(1')}_{i,j},\chi^{(1')}_{i,j}).
		\end{gather*}
\end{Remark}	

\section{Proof of the Main Theorem}\label{sec:proof}
The section is devoted to the proof of the Main Theorem. 
As explained in the introduction, the last step uses results and technics from \cite{AvE19}. 

For the moment, $\g$ is any simple Lie algebra and $f$ 
is a nilpotent element of $\g$.

Let 
\[P_{0,+}:=\{\lambda\in\h^*\colon \langle \lambda, \alpha^\vee \rangle \in\Z_{\geq 0} \text{ for all } \alpha\in\Delta_{0,+}\},\]
where $\Delta_{0,+}:=\Delta_0\cap \Delta_+$  with $\Delta_0$ the root system of $(\g_0,\h)$. 
Note that $P_{0,+}$ contains the set 
\[P_+ =\{\lambda\in\h^* \colon \langle \lambda, \alpha^\vee\rangle 
\in\Z_{\geq 0} \text{ for all }\alpha\in\Delta_+\} .\]

Let $U(\g)$ be the enveloping algebra of $\g$ and $H_f^0(?)$ the functor from the category of Harish-Chandra $U(\g)$-bimodules to the category of bimodules over the {\em finite $\W$-algebra $U(\g, f)$} (see \cite{Pre02}), which is the Zhu algebra of $\WA$ \cite{DK06}. 
By abuse of notation we use the same notation as for the functor in Sect.~\ref{sec:BRST}. 
We refer to \cite{AvE19} for more about this topic.

Write $J_\lambda\subset U(\g)$ for the annihilating ideal of the irreducible $\g$-module $L(\lambda)$ with highest weight $\lambda\in\h^*$. 
The quotient $H^0_f(U(\g)/J_\lambda)$ is a quotient algebra of the finite $W$-algebra $U(\g,f)=H^0_f(U(\g))$. 
For $\lambda\in P_{0,+}$, such that $\lambda + k\Lambda_0$ is admissible and $\dim L(\lam)$ is maximal, 
$H^0_f(U(\g)/J_\lambda)$ has a unique simple module denoted by $E_{J_\lambda}$ \cite[Theorem 7.7]{AvE19}. 
Let ${\mathbf{L}}(E_{J_\lambda})$ be the irreducible {\em Ramond twisted $\W_k(\g,f)$-module} attached to $E_{J_{\lambda}}$ (see \cite{KW08,Ara11}). The module ${\mathbf{L}}(E_{J_\lambda})$ is the unique simple quotient of the Verma module  \cite{AvE19}, 
$${\mathbf{M}(E_{J_\lambda}):=U(\WA)\*_{U(\WA)_{\geq0}}E_{J_\lambda}}.$$

\begin{Theorem}[{\cite[Theorem 7.8]{AvE19}}]
\label{th7.8}
	Let $k=-h^\vee+p/q$ be an admissible number for $\g$ and pick $f\in\O_q$. 
	Let $\lambda\in P_{0,+}$ be such that $\hat{\lambda}=\lambda + k\Lambda_0 \in Pr^k$. 
	Then 
	\[H^0_f(\widehat{L}_k({\lam}))\simeq {\mathbf L}(E_{J_{\lambda-\frac{p}{q}x_0}}),\]
	where $\widehat{L}_k({\lam})$ is the simple $\affg$-module with highest weight $\hat{\lambda}$.
	In particular, 
	\[\SWA\simeq H^0_f(\SVA)\simeq {\mathbf L}(E_{J_{-\frac{p}{q}x_0}}).\]
\end{Theorem}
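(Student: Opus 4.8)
Since the second statement is the special case $\lambda = 0$ of the first (noting that $0 \in P_{0,+}$ and $\widehat{L}_k(0) = \SVA$, while $\W_k(\g,f) = H^0_f(L_k(\g))$), it suffices to establish the isomorphism $H^0_f(\widehat{L}_k(\lam)) \simeq {\mathbf L}(E_{J_{\lambda - \frac pq x_0}})$. The plan is to compare the two sides through the BRST functor and the theory of Zhu algebras. First I would recall that the Zhu algebra of $\W_k(\g,f)$ is the finite $\W$-algebra $U(\g,f)$ (as quoted from \cite{DK06}), and that the BRST reduction $H^0_f(?)$ intertwines Zhu's functor on the affine side with the finite Drinfeld--Sokolov reduction $H^0_f(?)$ on Harish--Chandra bimodules; this compatibility is the bridge between the vertex-algebra statement and the finite-dimensional statement.

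The key steps, in order, would be: (1) Show $H^0_f(\widehat{L}_k(\lam)) \neq 0$. For admissible $k$ and $f \in \O_q$ this follows because the associated variety $X_{L_k(\g)} = \overline{\O_q}$ meets the Slodowy slice $\mathscr{S}_f$ (in the single point $f$, in fact), so by the results quoted after Proposition \ref{Pro:associated_variety_W-algebra} the reduction of any simple module in the relevant block is nonzero; more precisely one needs the vanishing of higher cohomology $H^i_f(\widehat{L}_k(\lam)) = 0$ for $i \neq 0$, which for admissible modules is part of the Arakawa--van Ekeren package. (2) Identify the top of $H^0_f(\widehat{L}_k(\lam))$ as a $U(\g,f)$-module: passing to Zhu algebras, $\top{H^0_f(\widehat{L}_k(\lam))}$ should be $H^0_f$ of the Harish--Chandra bimodule obtained from $L(\lam)$, i.e.\ of $U(\g)/J_{\lam'}$ for the appropriate shifted weight $\lam'$. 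The shift by $-\tfrac pq x_0$ arises from the conformal grading: the BRST differential twists the $\g[t]$-action by $x_0$, and the level-dependent normalization in $L(z) = L^\g(z) + \partial x(z) + L^{ch}(z)$ produces exactly the spectral shift $\lam \mapsto \lam - (k+h^\vee) x_0 = \lam - \tfrac pq x_0$ on highest weights. (3) Since $\lam \in P_{0,+}$ with $\hat\lam \in Pr^k$ principal admissible, the weight $\lam - \tfrac pq x_0$ satisfies the maximality-of-$\dim L(\cdot)$ hypothesis of \cite[Theorem 7.7]{AvE19}, so $H^0_f(U(\g)/J_{\lam - \frac pq x_0})$ has a unique simple module $E_{J_{\lam - \frac pq x_0}}$, and $\top{H^0_f(\widehat{L}_k(\lam))}$ must be a sum of copies of it. (4) Deduce that $H^0_f(\widehat{L}_k(\lam))$, being a positive-energy module whose top is simple, is a quotient of the Verma ${\mathbf M}(E_{J_{\lam-\frac pq x_0}})$; to conclude it equals the simple quotient ${\mathbf L}(E_{J_{\lam-\frac pq x_0}})$ one checks that $H^0_f(\widehat{L}_k(\lam))$ is itself simple as a $\W_k(\g,f)$-module — this uses exactness of $H^0_f$ on the category of admissible $\affg$-modules (again from \cite{AvE19}, or Arakawa's earlier work), so that simplicity of $\widehat{L}_k(\lam)$ is preserved, together with the nonvanishing from step (1).

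The main obstacle I expect is \textbf{step (1) combined with step (4)}: controlling the cohomology $H^\bullet_f(\widehat{L}_k(\lam))$ for a general admissible simple module, i.e.\ simultaneously proving concentration in degree $0$ and exactness of the functor on the relevant subcategory. This is precisely where the structural input of \cite{AvE19} (their analysis via associated varieties, the ``$+$" reduction, and the duality with finite $\W$-algebras) is indispensable; the grading/spectral-shift bookkeeping in step (2) is delicate but mechanical, whereas the homological vanishing is the genuinely hard point. Everything else — the Zhu-algebra translation and the Verma-module argument — is then formal once \cite[Theorems 7.7 and 7.8]{AvE19} are invoked, so in practice this ``proof" is really a citation of \cite{AvE19} with the observation that our $(\g,f,k)$ fall within its hypotheses.
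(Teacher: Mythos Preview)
The paper does not prove this statement at all: it is stated as a citation of \cite[Theorem 7.8]{AvE19} and used as a black box. Your proposal correctly identifies this in its final sentence --- there is nothing to compare against, and the appropriate ``proof'' here is simply the observation that the hypotheses of \cite[Theorem 7.8]{AvE19} are met, which is immediate from the setup.
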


Assume now, as in the previous sections, that $\g=\sp{4}$ and $f=f_{subreg}=e_{-\eta}$. 
Here $\Delta_{0,+} = \{\alpha_2\}$, whence 
\[P_{0,+}=\{\lambda\in\h^* \colon \langle \lambda, \alpha_2^\vee \rangle 
\in\Z_{\geq 0}\}=\C\varpi_1+\Z_{\geq 0}\varpi_2,\]
and
\[P_+=\Z\varpi_1+\Z\varpi_2,\]
where $\varpi_1,\varpi_2\in\h^*$ are the dual elements of $\alpha_1^\vee$ and $\alpha_2^\vee$ respectively : for $i,j\in\{1,2\}$, $\varpi_i(\alpha_j^\vee)=\indic{i,j}$.
According Proposition \ref{2.7} if $M$ is a simple $\SWA$-module, where $k=-3+p/3$, with $\gcd{p}{3}=1$, $p\geq3$, or $k=-3+p/4$, with $\gcd{p}{2}=1$, $p\geq4$,  
then it is isomorphic to $\Lxichiij{i,j}{s}$ with $\xi^{(s)}_{i,j}$ and $\chi^{(s)}_{i,j}$ 
as described in Proposition~\ref{2.4}. 

\begin{Proposition}\label{correspondence}
	Let  $k$, $\xi^{(s)}_{i,j}$ and $\chi^{(s)}_{i,j}$ be as in Proposition \ref{2.7} then $\Lxichiij{i,j}{s}$ is a simple $\SWA$-module.
\end{Proposition}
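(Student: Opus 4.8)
The plan is to identify each of the modules $\Lxichiij{i,j}{s}$ appearing in Proposition~\ref{2.7} with one of the Ramond-twisted modules ${\mathbf L}(E_{J_\lambda})$ of Theorem~\ref{th7.8}, and then to invoke the simplicity of the latter. Concretely, first I would list the relevant admissible weights: for $k=-3+p/q$ (with $q=3$ or $q=4$) one wants all $\lambda\in P_{0,+}$ with $\hat\lambda=\lambda+k\Lambda_0\in Pr^k$, i.e. the principal (resp.\ coprincipal) admissible weights of denominator $q$ lying in $P_{0,+}$. By the description $P_{0,+}=\C\varpi_1+\Z_{\geq0}\varpi_2$ and $P_+=\Z\varpi_1+\Z\varpi_2$, these can be parametrized by a pair of nonnegative integers together with the action of the extended affine Weyl group $\widetilde W$ on $\widehat\Delta(k\Lambda_0)$; the combinatorics will produce exactly three families when $q=3$ and two families when $q=4$, matching the superscripts $s\in\{1,2,3\}$ and $s\in\{1,2\}$. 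For each such $\lambda$, Theorem~\ref{th7.8} gives $H^0_f(\widehat L_k(\lambda))\simeq{\mathbf L}(E_{J_{\lambda-\frac pq x_0}})$, a \emph{simple} $\SWA$-module (being the unique simple quotient of a Verma module, and in fact a genuine simple module since lisseness forces positive-energy finite-dimensional top spaces).

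The second step is the bookkeeping: compute the highest weight data $(\xi,\chi)$ of ${\mathbf L}(E_{J_{\lambda-\frac pq x_0}})$ as a module over the generators $J,L$ of $\SWA$, and check that as $\lambda$ ranges over the families above one recovers precisely the triples $(\xi^{(s)}_{i,j},\chi^{(s)}_{i,j})$ of Proposition~\ref{2.4}. The eigenvalue $\xi$ of $J_0$ comes from pairing $\lambda-\frac pq x_0$ with the image of $h_2/2$ under the Zhu-algebra map $U(\g,f)\to\Zhu(\SWA)$, and $\chi$ is the corresponding conformal weight, computable from the standard formula for the $L_0$-eigenvalue on the top of a Drinfeld--Sokolov reduction of $\widehat L_k(\lambda)$ (this is the formula of Kac--Roan--Wakimoto, which for even $f$ reads in terms of $(\lambda|\lambda+2\rho)$, $(\lambda|x_0)$ and the shift by $\frac pq x_0$). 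Since by Proposition~\ref{2.7} every simple $\SWA$-module is some $\Lxichiij{i,j}{s}$, and the $\Lxichiij{i,j}{s}$ are mutually non-isomorphic, a counting argument plus the matching of highest weights will force each $\Lxichiij{i,j}{s}$ to coincide with an ${\mathbf L}(E_{J_\lambda})$, hence to be simple.

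I expect the main obstacle to be precisely the identification of the correct index ranges and the verification that the formulas match on the nose. One subtlety: a priori $\Lxichiij{i,j}{s}$ is defined abstractly via Zhu's theorem from the pair $(\xi,\chi)$, so it is automatically a simple \emph{$\WA$-module}; the real content is that it descends to the \emph{simple quotient} $\SWA$, equivalently that the maximal ideal of $\WA$ (which contains $(G^+_{-2})^{p-2}\vac$ resp.\ $(G^+_{-3})\vac\cdots$ by Proposition~\ref{2.5}) acts by zero. This is exactly what is granted by realizing $\Lxichiij{i,j}{s}$ as $H^0_f(\widehat L_k(\lambda))$, since that module is by construction a module over the simple reduction. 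The other place to be careful is that Theorem~\ref{th7.8} requires $\dim L(\lambda)$ maximal and $\hat\lambda\in Pr^k$; one must check every $\lambda$ in the families above satisfies these hypotheses, using the explicit root data of $\sp4$ and the admissibility bounds $p\geq h^\vee=3$ (resp.\ $p\geq h=4$). Once the dictionary $\lambda\leftrightarrow(i,j,s)$ is pinned down, simplicity of $\Lxichiij{i,j}{s}$ is immediate from \cite[Theorem 7.8]{AvE19}.
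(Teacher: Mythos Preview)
Your approach is correct in outline but takes a more laborious route than the paper. You propose to enumerate \emph{all} $\lambda\in P_{0,+}$ with $\hat\lambda\in Pr^k$ (including non-integral weights, since $P_{0,+}=\C\varpi_1+\Z_{\geq0}\varpi_2$), compute the pair $(\xi,\chi)$ for each via Theorem~\ref{th7.8}, and match these against the full list of $(\xi^{(s)}_{i,j},\chi^{(s)}_{i,j})$. This can be made to work, but it requires correctly parametrizing the non-integral admissible weights in $P_{0,+}$ by the cosets $y\in\widetilde W$ with $y(\widehat\Delta(k\Lambda_0)_+)\subset\widehat\Delta^{re}_+$, and then carrying out the highest-weight computation for each family.

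The paper sidesteps all of this by using the $\psi$-action already set up in Section~\ref{sec:twist}. It treats only the family $s=1$ (respectively $s=1'$) directly, taking the \emph{integral} dominant weights $\lambda_{i,j}=(j-1)\varpi_1+(i-1)\varpi_2\in P_+\cap Pr^k$, and verifies via Theorem~\ref{th7.8} and an explicit computation of the lowest $L_0$- and $J_0$-eigenvalues that $\mathbf{L}(E_{J_{\lambda_{i,j}-\frac{p}{q}x_0}})\simeq L(\xi^{(1)}_{i,j},\chi^{(1)}_{i,j})$. For $s=2,3$ (respectively $s=2'$) it simply observes that $\psi$ carries simple $\SWA$-modules to simple $\SWA$-modules and that the orbit structure under $\psi$ (worked out in the remark after Proposition~\ref{2.7}) links every $L(\xi^{(s)}_{i,j},\chi^{(s)}_{i,j})$ back to one of the form $L(\xi^{(1)}_{i',j'},\chi^{(1)}_{i',j'})$. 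So the paper never needs the non-integral weights at this stage; those appear only later in Section~\ref{sec:characters}, where the corresponding statement for $s\neq 1$ is left as Conjecture~\ref{conjecture}. The advantage of the paper's route is economy: one explicit family of weights plus an already-available automorphism, rather than three (or two) families of weights and a separate verification for each.
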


\begin{proof}
The computation depends on whether $k$ is principal admissible of coprincipal admissible, but the argument is very similar in both cases. 
We only detail the case $k=-3+p/3$, with $\gcd{p}{3}=1$, $p \geq 3$. 
For $1\leq i\leq p-2$ and $1\leq j\leq p-i-1$, set $\lambda_{i,j}=(j-1)\varpi_1+(i-1)\varpi_2$. 
We check that $\lambda_{i,j}\in P_+\cap Pr^k$. 
By Theorem \ref{th7.8}, 
\[H^0_f(\widehat{L}_k({\lambda}_{i,j}))\simeq \mathbf{L}(E_{J_{\lambda_{i,j}-\frac{p}{3}x_0}})\simeq {H^0_{f,-}(\widehat{L}_k({\lambda}_{i,j}-\frac{p}{3}x_0))},\]
where $x_0=\frac{h}{2}=\varpi_1$ and $H^\bullet_{f,-}(?)$ is the ``$-$" variant of the quantized Drinfeld-Sokolov reduction functor $H^\bullet_{f}$ defined in Sect.~\ref{sec:BRST} (see \cite{Ara11,AvE19} for a construction).
The lowest $L_0$-eigenvalue $h_{\lambda_{i,j}-\frac{p}{3}x_0}$ is the conformal dimension of $\mathbf{L}(E_{J_{\lambda_{i,j}-\frac{p}{3}x_0}})$ given by \cite[(7.4)]{AvE19}:
\begin{align*}
h_{\lambda_{i,j}-\frac{p}{3}x_0}&=\frac{(\lam_{i,j}-\frac{p}{3}x_0|\lam_{i,j}-\frac{p}{3}x_0+2\rho)}{2(k+h^\vee)}-\frac{k+h^\vee}{2}\norm{x_0}^2+(x_0|\rho)\\
&=\frac{-15+3i^2+6ij+6j^2+6p-2ip-4jp}{4p}=\chi^{(1)}_{i,j}.
\end{align*}
Besides, using \cite[(70)]{Ara11} the lowest $J_0$-eigenvalue of $H^0_{f,-}(\widehat{L}_k({\lambda}_{i,j}))$ is
\[(\lambda_{i,j}-\frac{p}{3}x_0|-\frac{\alpha_2^\vee}{2})=\frac{1-i}{2}=\xi^{(1)}_{i,j},\]
since $J(z)=J^{\alpha_2}(z)$ and $\widehat{w}_0\widehat{t}_{-x_0}J_{\alpha_2}^R=-J_{\alpha_2}$.
In conclusion
\begin{equation}\label{simple_mod}
\mathbf{L}(E_{J_{\lambda_{i,j}-\frac{p}{3}x_0}})\simeq L(\xi^{(1)}_{i,j},\chi^{(1)}_{i,j}).
\end{equation}
In this way $\Lxichiij{i,j}{1}$ is a simple $\SWA$-module for all $1\leq i\leq p-2$ and $1\leq j\leq p-i-1$. If $s=2$ or $3$, using the $\psi$-action on the module $\Lxichiij{i,j}{s}$ it always comes down to a module $\Lxichiij{i',j'}{1}$. As a consequence $\Lxichiij{i,j}{s}$ is a simple module of $\SWA$, too. 	
\end{proof}

\begin{Lemma}
Suppose that there is a nontrivial extension of $\SWA$-modules, 
\[0\longrightarrow \Lxichi\overset{\iota}{\longrightarrow} M\overset{\pi}{\longrightarrow} \Lxichiprime\longrightarrow 0.\] Then $L_0$ acts locally finitely on $M$.
\end{Lemma}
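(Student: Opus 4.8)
The goal is to show that in a nontrivial self-extension-type exact sequence
\[
0\longrightarrow \Lxichi\overset{\iota}{\longrightarrow} M\overset{\pi}{\longrightarrow} \Lxichiprime\longrightarrow 0
\]
the operator $L_0$ acts locally finitely, i.e. every $m\in M$ lies in a finite-dimensional $L_0$-stable subspace. I would first reduce to understanding the ``mixing'' of generalized $L_0$-eigenspaces. Both $\Lxichi$ and $\Lxichiprime$ are positive-energy modules with semisimple $L_0$-action and spectra bounded below, contained respectively in $\chi+\Z_{\geq 0}$ and $\chi'+\Z_{\geq 0}$. Pulling back the grading through $\pi$ and pushing forward through $\iota$, $M$ is graded by $L_0$-invariant subspaces $M^{[d]}$ supported on $d\in(\chi+\Z_{\geq 0})\cup(\chi'+\Z_{\geq 0})$, each of which is finite dimensional (since it is an extension of a graded piece of $\Lxichiprime$ by one of $\Lxichi$, both finite dimensional by lisse-ness / Lemma~\ref{Lem:existence}). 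So it suffices to show that each $M^{[d]}$ is a sum of finitely many generalized $L_0$-eigenspaces — but being finite dimensional, it automatically decomposes as a direct sum of generalized $L_0$-eigenspaces, and $L_0$ restricted to it is just a finite-dimensional linear operator.

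**Key steps, in order.** First I would set up the gradation: put $M^{[d]}=\iota(\Lxichi_d)\oplus s(\Lxichiprime_d)$ for any vector-space splitting $s$ of $\pi$, or more invariantly define $M^{[d]}$ as the preimage under $\pi$ of $\Lxichiprime_d$ intersected appropriately — the cleanest is to use that $M=\bigoplus_{n\in\Z_{\geq 0}}M^{[n]}$ as a graded vector space with $M^{[n]}$ the sum of the degree-$n$ pieces coming from the two factors, using that $\chi-\chi'\in\Z$ (which must itself be checked: a nonsplit extension forces the spectra to be congruent mod $\Z$, because $L_n$ for $n>0$ maps $s(\Lxichiprime)$ into $\iota(\Lxichi)$ only finitely many steps down, so compatibility of the gradings mod $\Z$ is forced — alternatively invoke that $M$ is generated over the relevant modes, all of which shift $L_0$-degree by integers). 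Second, I observe $[L_0,a_{(n)}]$ lowers or preserves degree by integer amounts for every generating field $a\in\{J,G^\pm,L\}$, so $M$ decomposes as the direct sum of its degree components $M^{[d]}$, $d\in(\chi+\Z_{\geq0})\cup(\chi'+\Z_{\geq0})$, each $L_0$-stable. Third, each $M^{[d]}$ is finite dimensional: it sits in $0\to \Lxichi_{d}\to M^{[d]}\to \Lxichiprime_{d}\to 0$ with both ends finite dimensional (top degrees and all graded pieces of the simple modules are finite dimensional by lisse-ness of $\SWA$, cf. the discussion before Proposition~\ref{2.7}, or directly from Lemma~\ref{Lem:existence}). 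Fourth, a linear operator on a finite-dimensional space has a generalized eigenspace decomposition; hence $L_0$ acts locally finitely on each $M^{[d]}$, and therefore on $M=\bigoplus_d M^{[d]}$.

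**Expected main obstacle.** The one genuinely non-formal point is justifying that the $L_0$-degrees appearing in $M$ lie in \emph{finitely many} cosets of $\Z$ — equivalently that a nontrivial extension can only occur between modules whose lowest weights differ by an integer, and that no new ``generalized-eigenvalue'' directions are created transversally. I would handle this by the standard argument: for $m\in M$ with $\pi(m)$ an $L_0$-eigenvector of eigenvalue $d'$, the vector $(L_0-d')m$ lies in $\iota(\Lxichi)$, and iterating with the fact that $\Lxichi$ has $L_0$ acting semisimply with spectrum in $\chi+\Z_{\geq0}$ bounded below, one sees $(L_0-d')\prod_{\text{finite}}(L_0-c_i)\,m=0$ for suitable $c_i\in\chi+\Z_{\geq0}$; so $m$ generates a finite-dimensional $L_0$-stable subspace. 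This is essentially the content of the statement, and it is where the boundedness-below and semisimplicity of $L_0$ on each simple factor is really used; everything else is bookkeeping about the integer grading coming from the modes $J_n,G^\pm_n,L_n$.
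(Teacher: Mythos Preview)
Your final paragraph (``Expected main obstacle'') contains the correct and complete argument, and it is exactly what the paper does: for $m\in M$, the image $\pi(m)$ is a finite sum of $L_0$-eigenvectors in $\Lxichiprime$, so some polynomial $\prod_j(L_0-\mu_j)$ annihilates $\pi(m)$; then $\prod_j(L_0-\mu_j)m\in\iota(\Lxichi)$, where again $L_0$ is diagonalizable, so a further polynomial $\prod_i(L_0-\nu_i)$ kills it. Hence $m$ is annihilated by a polynomial in $L_0$, which is precisely local finiteness. (Minor point: you phrase it for $\pi(m)$ an eigenvector, but the general case is the same with a product over the finitely many eigenvalues appearing.)

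The earlier ``Key steps'' section, however, is a detour that introduces two circularities you should drop. First, you invoke $\chi-\chi'\in\Z$; in the paper this is the \emph{next} lemma, and its proof uses the generalized-eigenspace decomposition of $M$, which in turn relies on the local finiteness you are currently proving. Second, your proposed graded pieces $M^{[d]}$ are not a priori $L_0$-stable: defining them via a vector-space section $s$ of $\pi$ gives only a vector-space grading, and for $v\in s(\Lxichiprime_d)$ one has $L_0v-dv\in\iota(\Lxichi)$ with no control over which weight component it lands in. Making $M^{[d]}$ the generalized $L_0$-eigenspace would fix this, but that presupposes the local finiteness you want to prove. In short: discard the grading setup and the appeal to $\chi\equiv\chi'\pmod\Z$, and present directly the polynomial-annihilation argument from your last paragraph.
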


\begin{proof}
	Suppose there is a nontrivial extension
		\begin{equation}\label{sequence}
			0\longrightarrow \Lxichi  \overset{\iota}{\longrightarrow} M \overset{\pi}{\longrightarrow} \Lxichiprime \longrightarrow 0.
		\end{equation}
		
	Since $\W_k(\g,f)$ is lisse, $L:=\Lxichi$ and $L':=\Lxichiprime$ 
	are $L_0$-diagonalizable and the $L_0$-eigenspaces are finite dimensional.
	
	Let $m\in M$. Since $\pi(m)\in L'$ there exist $w_1,\ldots, w_s\in L'$ and $\mu_1,\ldots,\mu_s\in\C$ such that $L_0w_j=\mu_jw_j$ for all $1\leq j\leq s$ and $\prod_{j=1}^{s}(L_0-\mu_j\id)\pi(m)=0$. Then
	\[\pi(\prod_{j=1}^{s}(L_0-\mu_j\id)m)=0.\]
	As a consequence $\prod_{j=1}^{s}(L_0-\mu_j\id)m\in\Ima\iota$. Let $m_1\in L$ such that $\iota(m_1)=\prod_{j=1}^{s}(L_0-\mu_j\id)m$. As before, there are $v_1,\ldots,v_r\in L$ and $\nu_1,\ldots,\nu_r\in\C$ 
	such that $\prod_{i=1}^{r}(L_0-\nu_i\id)m_1=0$. 
	Then
	\[\iota(\prod_{i=1}^{r}(L_0-\nu_i\id)m_1)=0=\prod_{i=1}^{r}(L_0-\nu_i\id)\iota(m_1)=\prod_{i=1}^{r}(L_0-\nu_i\id)\prod_{j=1}^{s}(L_0-\mu_j\id)m.\]
	Hence $m$ belongs to some $L_0$-stable finite dimensional vector subspace of
	\[\bigoplus_{i=1}^r\ker(L_0-\nu_i\id)\oplus\bigoplus_{j=1}^s\ker(L_0-\mu_j\id).\]
\end{proof}

\begin{Lemma}
\label{lemma2}
If there exists a nontrivial extension of $\SWA$-modules 
\[0\longrightarrow \Lxichi\longrightarrow M\longrightarrow \Lxichiprime\longrightarrow 0\] 
then $\chi$ and $\chi'$ coincide modulo $\Z$.
\end{Lemma}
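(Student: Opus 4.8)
The plan is to combine the local finiteness of $L_0$ on $M$ from the previous lemma with the fact that $\SWA$ carries an integral conformal grading. First I would decompose $M=\bigoplus_{a\in\C}M^{[a]}$ into generalized $L_0$-eigenspaces $M^{[a]}=\{m\in M:(L_0-a\id)^N m=0\text{ for }N\gg 0\}$, which is legitimate precisely because $L_0$ acts locally finitely on $M$. Since $\Lxichi$ and $\Lxichiprime$ are $L_0$-diagonalizable with spectra contained in $\chi+\Z_{\geq 0}$ and $\chi'+\Z_{\geq 0}$ respectively, exactness of the sequence forces every $a$ with $M^{[a]}\ne 0$ to lie in $(\chi+\Z_{\geq 0})\cup(\chi'+\Z_{\geq 0})$.

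Next I would regroup these eigenspaces by cosets modulo $\Z$: for $\bar a\in\C/\Z$ set $M_{\bar a}:=\bigoplus_{b\in\bar a}M^{[b]}$, so that $M=\bigoplus_{\bar a\in\C/\Z}M_{\bar a}$. The crucial point is that the strong generators $J$, $G^\pm$, $L$ of $\SWA$ have integral conformal weights ($1$, $2$, $2$), hence $\SWA=\bigoplus_{d\in\Z_{\geq 0}}\SWA_d$ is $\Z$-graded; therefore for a homogeneous vector $a$ of conformal weight $d$ the mode $a_{(n)}$ shifts generalized $L_0$-eigenvalues by $d-n-1\in\Z$, and so preserves each $M_{\bar a}$. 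Consequently every $M_{\bar a}$ is an $\SWA$-submodule of $M$.

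Finally, suppose for contradiction that $\chi\not\equiv\chi'\pmod\Z$, i.e.\ $\bar\chi\ne\bar{\chi'}$. Then $M=M_{\bar\chi}\oplus M_{\bar{\chi'}}$ as $\SWA$-modules. Because $\pi$ intertwines $L_0$ and the $L_0$-spectrum of $\Lxichiprime$ lies in $\bar{\chi'}$, we get $\pi(M_{\bar\chi})=0$, so $M_{\bar\chi}\subseteq\ker\pi=\iota(\Lxichi)$; the reverse inclusion holds because the $L_0$-spectrum of $\Lxichi$ lies in $\bar\chi$, whence $M_{\bar\chi}=\iota(\Lxichi)$. Then $\pi|_{M_{\bar{\chi'}}}\colon M_{\bar{\chi'}}\to\Lxichiprime$ is an isomorphism, and its inverse splits the sequence, contradicting its nontriviality. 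Hence $\chi\equiv\chi'\pmod\Z$.

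The argument is short and essentially formal; the only step worth double-checking — and in that sense the main obstacle — is the passage from honest $L_0$-eigenspaces on the simple subquotients to generalized $L_0$-eigenspaces on $M$, together with the verification that the integral conformal grading of $\SWA$ really does force every mode to preserve the coset decomposition $M=\bigoplus_{\bar a}M_{\bar a}$; once these are in place, the splitting and the contradiction follow immediately.
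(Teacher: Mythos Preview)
Your proposal is correct and follows essentially the same approach as the paper: decompose $M$ into generalized $L_0$-eigenspaces (using the local finiteness from the preceding lemma), regroup these into $\SWA$-submodules indexed by cosets in $\C/\Z$ via the integral conformal grading, and observe that if $\chi\not\equiv\chi'\pmod\Z$ the resulting two-summand decomposition splits the extension. The paper phrases the last step via the induced short exact sequences $0\to L[\chi]\to M[\chi]\to 0$ and $0\to M[\chi']\to L'[\chi']\to 0$, but this is the same argument as your identification $M_{\bar\chi}=\iota(\Lxichi)$ together with $\pi|_{M_{\bar{\chi'}}}$ being an isomorphism.
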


\begin{proof}
	Suppose that there is a nontrivial extension
	\[0\longrightarrow L(\xi,\chi) \longrightarrow M\longrightarrow L(\xi',\chi')\longrightarrow 0.\]
	Set as in the previous proof, $L:=L(\xi,\chi)$ and $L':=L(\xi',\chi')$. 
	For $d\in\C$, let $M_d$ be the generalized $L_0$-eigenspace of $M$ attached to the eigenvalue $d$. 
	Set ${M[d]:=\bigoplus_{d'\in d+\Z}M_{d'}}$. It is a $\SWA$-submodule of $M$. 
	Then $M=\bigoplus_{\substack{d\in\C,\\0\leq\Re(d)<1}}M[d]$ is a direct sum decomposition of the $\SWA$-modules of $M$.
	For any $d$, the previous decomposition induces the following exact sequence
	\begin{equation}\label{sequencerest}0\longrightarrow L[d]\longrightarrow M[d]\longrightarrow L'[d]\longrightarrow 0.
	\end{equation}
	Assume $\chi-\chi'\notin\Z$. Since $L[d]=0$ if $d-\chi\notin\Z$, and $L[d']=0$ if $d'-\chi'\notin\Z$, 
	we get that  $M=M[\chi]\oplus M[\chi']$. 
	Taking $d=\chi$ and $d=\chi'$ in \eqref{sequencerest} we get
	\begin{align*}
	0\longrightarrow L[\chi]\longrightarrow &M[\chi]\longrightarrow 0,\\
	0\longrightarrow &M[\chi']\longrightarrow L'[\chi']\longrightarrow 0.
	\end{align*}
	Finally $M=L[\chi]\oplus L'[\chi']=L\oplus L'$ since $L$ and $L'$ are simple modules. So the sequence $0\rightarrow L\rightarrow M\rightarrow L'\rightarrow0$ splits, whence a contradiction.
\end{proof}

\begin{Proposition}
Suppose that either $k=-3+p/3$ with $(p,3)=1$, $p \geq 3$, or $k=-3+p/4$ with $(p,2)=1$, $p\geq 4$. 
Let $\WA\!\text{-}\!\Mod$ be the category of $\WA$-modules. Then 
\begin{equation*}
\Ext^1_{\WA\!\text{-}\!\Mod}(\Lxichiij{i,j}{s},\Lxichiij{i',j'}{s'})=0,
\end{equation*} 
where $\xi^{(s)}_{i,j}$ and $\chi^{(s)}_{i,j}$ are described in Proposition~\ref{2.7}.
\end{Proposition}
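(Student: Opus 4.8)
The plan is to proceed by contradiction: I would assume that a nontrivial extension
\[0\longrightarrow \Lxichiij{i,j}{s}\longrightarrow M\longrightarrow \Lxichiij{i',j'}{s'}\longrightarrow 0\]
of $\WA$-modules exists and derive a contradiction from the structure theory now available. First I would invoke the two preceding lemmas: $L_0$ then acts locally finitely on $M$ and $\chi^{(s)}_{i,j}\equiv\chi^{(s')}_{i',j'}\pmod{\Z}$, so that $M$ is bounded from below, all its generalized $L_0$-eigenspaces are finite dimensional and lie in a single coset of $\C/\Z$; in other words $M$ belongs to the category of $\WA$-modules on which the techniques of \cite{AvE19} operate. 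Since $\psi$ is an auto-equivalence of $\WA\text{-}\Mod$ it induces isomorphisms of $\Ext^1$-groups, and by Proposition \ref{correspondence} and Theorem \ref{th7.8} each simple $\SWA$-module — hence each of $\Lxichiij{i,j}{s}$ and $\Lxichiij{i',j'}{s'}$ — is, after an appropriate power of $\psi$, a ``$-$''-reduction $H^0_{f,-}(\widehat{L}_k(\mu))$ of an admissible highest weight $\affg$-module, with $\widehat{\mu}\in Pr^k$ and $\mu+\tfrac{p}{q}x_0\in P_+$.

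The central step is to transport the extension to the affine side. For this I would use that the Drinfeld--Sokolov functors $H^\bullet_{f,-}(?)$ are exact on the category $\mc{O}_k$ (i.e.\ $H^i_{f,-}$ vanishes for $i\ne 0$ there), and, following the strategy of \cite{AvE19}, that the block of $\WA\text{-}\Mod$ generated by the simple modules occurring in our extension is equivalent, through $H^0_{f,-}$, to the block of $\mc{O}_k$ generated by the corresponding $\widehat{L}_k(\lambda)$ with $\lambda+\tfrac{p}{q}x_0\in P_+$ — equivalently, by Theorem \ref{th7.8}, to a block of modules over the finite $\W$-algebra $U(\g,f)$ coming from the finite-dimensional $U(\g)$-modules $L(\lambda+\tfrac{p}{q}x_0)$. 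Under such an equivalence $M$ would correspond to a necessarily non-split extension of $\affg$-modules $0\to\widehat{L}_k(\mu)\to\widetilde{M}\to\widehat{L}_k(\mu')\to 0$ inside that block, and, passing to top components, to a non-split extension of modules over $H^0_f(U(\g)/J_{\lambda})$ with $\lambda=\mu+\tfrac{p}{q}x_0$.

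To finish I would observe that no such extension exists. For $\lambda=\mu+\tfrac{p}{q}x_0\in P_+$ the $\g$-module $L(\lambda)$ is finite dimensional, so $U(\g)/J_{\lambda}\cong\End_{\C}L(\lambda)$ is a matrix algebra; by \cite[Theorem 7.7]{AvE19} the algebra $H^0_f(U(\g)/J_{\lambda})$ has a unique simple module, and being finite dimensional it is therefore semisimple, hence a matrix algebra, so that $\Ext^1$ between its simple modules vanishes (and likewise for the pair $\lambda,\lambda'$ in the block under consideration). This contradicts the existence of $\widetilde{M}$, hence of $M$, and yields $\Ext^1_{\WA\text{-}\Mod}(\Lxichiij{i,j}{s},\Lxichiij{i',j'}{s'})=0$.

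The hard part will be the transport step: one needs not merely the exactness of $H^0_{f,-}$ but that it detects non-split extensions among the relevant simple modules, i.e.\ that the two blocks are genuinely equivalent abelian categories and that the extension $M$, a priori only a $\WA$-module, does lie in the image of $H^0_{f,-}$. This is precisely where the full machinery of \cite{AvE19} — exactness of the Drinfeld--Sokolov functors on category $\mc{O}$, the duality functor, and the identification of Theorem \ref{th7.8} of simple $\WA$-modules with twisted reductions of admissible $\affg$-modules — has to be brought in, and one also has to verify that the admissibility of $k$ and the $P_{0,+}$-dominance hypotheses are met at every stage so that Theorem \ref{th7.8} applies throughout.
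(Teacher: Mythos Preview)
Your proposal has a genuine gap precisely at its acknowledged ``hard part''. The block equivalence you invoke --- that $H^0_{f,-}$ induces an equivalence between the relevant block of $\WA\text{-}\Mod$ and a block of $\mc{O}_k$, so that an arbitrary extension $M$ of $\WA$-modules lifts to an extension $\widetilde{M}$ of $\affg$-modules --- is not a result available in \cite{AvE19}. What is known is that $H^0_{f,-}$ is exact on $\mc{O}_k$ and sends admissible simples to the correct simples, but no quasi-inverse is constructed, and there is no a priori reason your $M$ lies in the essential image. Your subsequent step also fails: when $\chi^{(1)}_{i,j}\neq\chi^{(1)}_{i',j'}$ the top component of $M$ is simply the top of one of the two simples, so ``passing to top components'' does not produce a non-split extension of Zhu-modules, and the semisimplicity of $H^0_f(U(\g)/J_\lambda)$ is of no help. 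Finally, even granting everything else, that semisimplicity only controls extensions between modules annihilated by the \emph{same} $J_\lambda$, not between simples coming from distinct $\lambda,\lambda'$.

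The paper avoids all of this by staying on the $\WA$ side. It first checks arithmetically that $\chi^{(s)}_{i,j}-\chi^{(s')}_{i',j'}\notin\Z$ whenever (after a $\psi$-twist) the types differ, which by Lemma~\ref{lemma2} kills the $s\neq s'$ cases. For $s=s'$ (reduced to $s=1$ via $\psi$), it splits into three cases on the $\chi$'s. If they coincide, semisimplicity of $\Zhu(\SWA)$ splits the top and Zhu induction splits $M$. If $\chi^{(1)}_{i,j}>\chi^{(1)}_{i',j'}$, one lifts a highest-weight vector to obtain a map from the $\WA$-Verma module $\mathbf{M}(E_{J_{\lambda_{i',j'}-\frac{p}{q}x_0}})$ into $M$ and argues it is surjective if the sequence is non-split; hence $[\mathbf{M}(E_{J_{\lambda_{i',j'}-\frac{p}{q}x_0}}):L_{i,j}]\neq 0$, and \cite[Theorem~7.6]{AvE19} converts this multiplicity into the linkage $\lambda_{i,j}\in W\circ\lambda_{i',j'}$, contradicting dominance. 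The remaining inequality is reduced to this one by the duality functor. The decisive external input is therefore the Verma-module multiplicity criterion of \cite[Theorem~7.6]{AvE19}, not a categorical equivalence.
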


\begin{proof}
Assume $k=-3+p/3$  with $(p,3)=1$, $p \geq 3$. 
The argument for the coprincipal case is the same.	
It clearly appears that for all $1\leq i,i'\leq p-2$, $1\leq j\leq p-i-1$ and $1\leq j'\leq p-i'-1$, the differences $\chi^{(2)}_{i,j}-\chi^{(1)}_{i',j'}$ and $\chi^{(3)}_{i,j}-\chi^{(1)}_{i',j'}$ are not integers. 
According to Lemma \ref{lemma2}, any extension
\[0\longrightarrow \Lxichiij{i,j}{s}\longrightarrow M\longrightarrow \Lxichiij{i',j'}{s'}\longrightarrow 0,\] 
where exactly one of $\chi^{(s)}_{i,j}$ and $\chi^{(s')}_{i',j'}$ is with the first form is trivial. 
Applying $\psi$ we deduce that if $s\neq s'$ then 
\[\Ext^1_{\W^k(\g,f)\!\text{-}\!\Mod}(\Lxichiij{i,j}{s},\Lxichiij{i',j'}{s'})=0.\]
Suppose that $s=s'$. 
Using the $\psi$-action we can suppose that $s=s'=1$. 
According to \eqref{simple_mod}, since $\SWA$ is lisse, 
it suffices to show that there is not nontrivial extension
\begin{equation}\label{seq}
0\longrightarrow \mathbf{L}(E_{J_{\lambda_{i,j}-\frac{p}{3}x_0}}) \overset{\iota}{\longrightarrow} M\overset{\pi}{\longrightarrow} 
\mathbf{L}(E_{J_{\lambda_{i',j'}-\frac{p}{3}x_0}}) \longrightarrow 0.
\end{equation}
Set 
${L}_{i,j}:=	\mathbf{L}(E_{J_{\lambda_{i,j}-\frac{p}{3}x_0}})$ 
and 
${L}_{i',j'}:=\mathbf{L}(E_{J_{\lambda_{i',j'}-\frac{p}{3}x_0}})$. 
If $\chi^{(1)}_{i,j}=\chi^{(1)}_{i',j'}$, since the Zhu algebra $\Zhu(\SWA)$ is semisimple, the sequence
\begin{equation}
0\longrightarrow \top{({L}_{i,j})}\longrightarrow \top{M}\longrightarrow \top{({L}_{i',j'})}\longrightarrow 0.
\end{equation}
of $\Zhu(\SWA)$-modules is split. Applying the Zhu induction functor we get \eqref{seq} splits.
	
Let suppose $\chi^{(1)}_{i,j}>\chi^{(1)}_{i',j'}$. 
Set $M_{i',j'}:=\mathbf{M}(E_{J_{\lambda_{i',j'}-\frac{p}{3}x_0}})$.
Let $v_+$ be a primitive vector of $M_{i',j'}$ and $v \in M$ be such that $\pi(v)$ is the image of $v_+$ in $L_{i',j'}$. 
We have $\pi((L_0-\chi^{(1)}_{i',j'}\id)v)=(L_0-\chi^{(1)}_{i',j'}\id)\pi(v)=0$.
Hence $(L_0-\chi^{(1)}_{i',j'}\id)v\in\Ima\iota$. 
Since $\Ima\iota\simeq L_{i,j}$ and $\chi^{(1)}_{i,j}>\chi^{(1)}_{i',j'}$, $(L_0-\chi^{(1)}_{i',j'}\id)v\neq0$. 
As a consequence, it exists an injective $\WA$-modules homomorphism ${f:M_{i',j'}\to M}$ such that the diagram commutes:
	\begin{center}
		\begin{tikzcd}
			& M_{i',j'} \arrow[dashrightarrow]{dl}{f}\arrow{d}\\
			M \arrow[twoheadrightarrow]{r} & L_{i',j'}
		\end{tikzcd}
	\end{center}
Let us suppose  that the sequence \eqref{seq} does not split. 
The module $f(M_{i',j'})$ is a submodule of $M$. 
As a consequence, since $L_{i,j}\simeq\iota(L_{i,j})$ is simple, 
either $\iota(L_{i,j})\subset f(M_{i',j'})$ or $\iota(L_{i,j})\oplus f(M_{i',j'})$.
However if $\iota(L_{i,j})\oplus f(M_{i',j'})$ then the sequence
\[0\longrightarrow {L}_{i,j}\longrightarrow \iota(L_{i,j})\oplus f(M_{i',j'})\longrightarrow {L}_{i',j'}\longrightarrow 0\]
splits contradicting the fact that the sequence \eqref{seq} does not split. 
Hence $\iota(L_{i,j})\subset f(M_{i',j'})$.
Let $m\in M$. Since $\pi$ is surjective 
it exists $m_1\in M_{i',j'}$ such that $\pi(m)=\pi\circ f(m_1)$. Thus $m-f(m_1)\in\ker\pi$. 
We get $m\in\iota(L_{i,j})\subset f(M_{i',j'})$. 
Therefore $f$ is surjective.
It implies that $M$ is isomorphic to $M_{i',j'}$ as $\WA$-modules.
Hence ${[M_{i',j'}:L_{i,j}]\neq0}$. 
By \cite[Theorem 7.6]{AvE19} this happens only if it exists $\mu\in P_{0,+}$ such that ${[{\widehat{M}_k({\lambda}_{i',j'}-\frac{p}{3}x_0)}:\widehat{L}_k({\mu}-\frac{p}{3}x_0)]\neq0}$, 
where $\widehat{M}_k({\lambda}_{i',j'}-\frac{p}{3}x_0)$ is the Verma module of $\g$ with highest weight $\hat{\lam}_{i',j'}-\frac{p}{3}x_0$, 
and $E_{J_{\lambda_{i,j}-\frac{p}{3}x_0}}$ is a direct summand of $H_0^{\text{Lie}}(L(\mu-\frac{p}{3}x_0))$\footnote{The functor $M\mapsto H^{\text{Lie}}_0(M)$ defines a correspondence between a subcategory of the category $\mc{O}$ of $\g$-modules and the category of the finite dimensional representations of $U(\g,f)$, see \cite[Sect.~5]{Ara11} for a precise definition.}.

This first condition implies that $\mu\in W\circ\lambda_{i',j'}$ and we get $\lambda_{i,j}\in W\circ\mu$ from the second one. 
Hence $\lambda_{i,j}\in W\circ\lambda_{i',j'}$. 
Since $\hat{\lam}_{i,j}$ and $\hat{\lam}_{i',j'}$ are both dominant 
they are equal, and $\lambda_{i,j}=\lam_{i',j'}$ contradicting $\chi^{(1)}_{i,j}>\chi^{(1)}_{i',j'}$.
	
Finally if $\chi^{(1)}_{i,j}<\chi^{(1)}_{i',j'}$ by applying the duality functor to \eqref{seq} we are  back to the previous case $\chi^{(1)}_{i,j}>\chi^{(1)}_{i',j'}$.
\end{proof}

\begin{Example}\label{k-5/3}
	Let $k=-\frac{5}{3}$. 
	There exist nine simple $\W_{-\frac{5}{3}}(\g,f)$-modules. 
	We describe below the two orbits under the action of $\psi$:
	\begin{gather*}
	L(0,0)\overset{\psi}{\to} L\left(-\frac{1}{3},\frac{1}{6}\right)\overset{\psi}{\to} L\left(-\frac{2}{3},\frac{2}{3}\right)\overset{\psi}{\to} L\left(0,\frac{1}{2}\right)\overset{\psi}{\to}\\ L\left(-\frac{1}{3},\frac{2}{3}\right)\overset{\psi}{\to} L\left(\frac{1}{3},\frac{1}{6}\right)\overset{\psi}{\to}L(0,0),
	\end{gather*}
	\begin{equation*}
	L\left(-\frac{1}{2},\frac{7}{16}\right)\overset{\psi}{\to} L\left(\frac{1}{6},\frac{5}{48}\right)\overset{\psi}{\to} L\left(-\frac{1}{6},\frac{5}{48}\right)\overset{\psi}{\to} L\left(-\frac{1}{2},\frac{7}{16}\right).
	\end{equation*}
\end{Example}

A vertex algebra $V$ is said to be \emph{positive}~\cite{AvE19} if every irreducible $V$-modules besides $V$ itself has positive conformal dimension.
In our setting, $\W_k(\g,f)$ is positive if $\chi^{(s)}_{i,j}\geq0$ for all $s,i,j$ as in Proposition \ref{2.7}, and $k,\g,f$ as in this proposition. 
We observe the vertex algebras $W_{-\frac{5}{3}}(\g,f)$, $\W_{-\frac{4}{3}}(\g,f)$, 
$\W_{-\frac{7}{4}}(\g,f)$ and $\W_{-\frac{5}{4}}(\g,f)$ are positive. 

If $V$ is {\em unitary} (\cite[Sect. 2]{DL14}) 
then it is unitary as a module of the Virasoro subalgebra generated by the 
conformal vector as well. 
This forces the conformal dimension to be nonnegative. 
In particular, it is a positive vertex algebra. 
We expect the following result:

\begin{Conjecture}
	At level $k\in\{-\frac{5}{3},-\frac{4}{3},-\frac{7}{4},-\frac{5}{4}\}$, the vertex algebra 
	$\W_k(\sp{4},f_{subreg})$ is unitary. 
\end{Conjecture}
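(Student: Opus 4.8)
The plan is to realise each of the four vertex algebras as a lattice--type extension of an explicitly unitary vertex algebra, via the coset by the Heisenberg field $J$, and then to use that irreducible modules over unitary rational vertex algebras are again unitary.

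First I would construct the conjugate--linear involution entering the definition of unitarity (\cite{DL14}). The OPEs of Section~\ref{sec:OPE} have coefficients that are real whenever $k$ is real, so for each of the four (real) levels the assignment $\phi(L)=L$, $\phi(J)=-J$, $\phi(G^{\pm})=G^{\mp}$ extends to an involutive conjugate--linear automorphism $\phi$ of $\WA$ (checked against each OPE, in particular the $G^{+}G^{-}$ one); it preserves the maximal ideal and descends to $\SWA$. Since the weight--one space $\SWA_{1}=\C J$ satisfies $L_{1}(\C J)=0$, the simple vertex algebra $\SWA$ carries a unique--up--to--scalar nondegenerate invariant bilinear form, and composing it with $\phi$ gives an invariant Hermitian form, normalised so that $\vac$ has norm $1$; unitarity is the statement that this form is positive--definite.

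Next I would pass to the Heisenberg coset. At all four levels $2+k>0$, so $J$ spans a rank--one Heisenberg subalgebra $\pi\subset\SWA$ of positive norm; set $\omega'=\omega-\tfrac{1}{2(2+k)}\NO{JJ}$, a conformal vector commuting with $\pi$, and let $C=\mathrm{Com}(\pi,\SWA)$ be the commutant, of central charge $c'=\charge-1$. From the formula for $\charge$ one obtains $c'=\tfrac12$ at $k=-\tfrac53$, $c'=1$ at $k=-\tfrac43$, $c'=0$ at $k=-\tfrac74$ and $c'=\tfrac67$ at $k=-\tfrac54$, all in the unitary range of the Virasoro algebra. Because $\SWA$ is rational and lisse, $J_{0}$ acts semisimply with real spectrum and $\SWA$ decomposes over $\pi\otimes C$ as $\bigoplus_{a\in A}\pi_{a}\otimes C_{a}$, with $\pi_{a}$ the charge--$a$ Heisenberg Fock module, $C_{0}=C$, each $C_{a}$ an irreducible $C$--module, and $A\subset\C$ a subgroup; this exhibits $\SWA$ as a lattice--type extension of $\pi\otimes C$, and $C$ inherits rationality and lisseness. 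Moreover $\phi(J)=-J$ makes $J_{0}$ self--adjoint, so the invariant form of $\SWA$ is orthogonal for the $J_{0}$--grading and restricts on each summand $\pi_{a}\otimes C_{a}$ to a real scalar times the tensor of the Shapovalov form of $\pi_{a}$ with an invariant form on $C_{a}$.

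The remaining, and I expect hardest, step is to identify $C$ together with the charge lattice $A$: using rationality of $C$, the value of $c'$, and the module and character data of Sections~\ref{sec:proof}--\ref{sec:characters}, one would show $C$ is, respectively, the Ising vertex algebra $\mathcal{M}(3,4)$ (forced by $c'=\tfrac12$), a unitary rank--one lattice vertex algebra ($c'=1$), $\C$ (forced by $c'=0$), and the unitary minimal model $\mathcal{M}(6,7)$ (after excluding extra currents by conformal--weight bounds) --- equivalently, one would establish the conjectured explicit isomorphisms of $\W_k(\sp4,f_{subreg})$ with lattice $\times$ minimal--model type vertex algebras, which is precisely the part that is still open; among the four, $k=-\tfrac43$ is the least rigid, since $c=1$ rational vertex algebras are not determined by their central charge and the rank--one lattice occurring in the coset must be read off from the characters. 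Granting this identification, $C=C_{0}$ and $\pi=\pi_{0}$ are unitary, hence so are all their irreducible modules $C_{a}$ and $\pi_{a}$ (the charges $a\in A$ being real), so each $\pi_{a}\otimes C_{a}$ carries a positive--definite invariant form; the scalars relating these to the restriction of the $\SWA$--form are forced to be positive by compatibility with the intertwining operators of the extension and the normalisation on $\vac$, exactly as in the proof of unitarity of lattice vertex algebras (\cite{DL14}). Therefore the invariant Hermitian form on $\SWA$ is positive--definite, i.e.\ $\SWA$ is unitary. The purely computational alternative --- verifying directly that the Gram matrix of the invariant form is positive--definite on every bigraded piece $\SWA_{a,d}$ using the explicit OPEs --- still requires the coset decomposition above to reduce the infinitely many Gram matrices to finitely many, so it does not bypass the obstacle.
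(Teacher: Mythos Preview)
The statement you are addressing is a \emph{Conjecture} in the paper, not a theorem: the paper offers no proof, only the observation preceding it that the four listed $\W$-algebras are positive (all simple modules have nonnegative conformal dimension), which is merely consistent with unitarity. There is therefore no ``paper's own proof'' to compare your proposal against.

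Your outline is a reasonable strategy and your central--charge computations for the Heisenberg coset ($c'=\tfrac12,\,1,\,0,\,\tfrac67$) are correct. However, as you yourself flag, the decisive step --- identifying the commutant $C$ with an explicitly unitary vertex algebra and controlling the extension data --- is left open, and this is exactly the content of the conjecture. Two further cautions: (i) even at $c'=\tfrac12$ and $c'=\tfrac67$, rationality plus central charge does not by itself pin down $C$ as the Virasoro minimal model without additional argument (e.g.\ ruling out extensions or extra weight--$1$ fields), so the words ``forced by'' overstate what is established; (ii) your claim that the scalars relating the restricted forms to the tensor forms are ``forced to be positive by compatibility with the intertwining operators'' is precisely the delicate point in proving unitarity of simple--current extensions and requires a genuine argument, not available in \cite{DL14} in this generality. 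In short, your proposal is a plausible roadmap, but it does not close the gap the paper leaves open.
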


\begin{Remark}
For the other admissible levels as in the Main Theorem, which means for $k=-3+p/3$ with $\gcd{p}{3}=1$ and $p\geq6$, or $k=-3+p/4$ with $\gcd{p}{2}=1$ and $p\geq8$, the vertex algebra $\W_k(\sp{4},f_{subreg})$ is not positive because $\chi^{(1)}_{1,2}=-1+\frac{6}{p}$ and $\chi^{(1')}_{1,2}=-1+\frac{8}{p}$.
\end{Remark}

\section{Action of the component group of $\O_{subreg}$ over the simple $\SWA$-modules}\label{sec:componentgroup}

Realize $\sp{4}$ as in Sect.~\ref{sec:OPE}. The adjoint group of $\sp{4}$ is the quotient  $PSp_4=Sp_4/\{\pm I_4\}$\footnote{The matrix $I_n$ denotes the $n$-size square identity matrix.} where $Sp_4$ is the set of $4$-size square matrices $g$ such that $gJ_4g^T=J_4$. Fix an $\sl{2}$-triple $\{e,h,f\}$ such that $f\in\O_{subreg}$.
It is well-known that the component group $A(\O_{subreg})$ is isomorphic to $\Z/2\Z$ (see \cite[Corollary~6.1.6]{CM93}).
Let us describe explicitly the action of it on $\W^k(\sp{4},f_{subreg})$.

One may assume that
$$f=\begin{pmatrix}
	0&0\\-I_2&0
\end{pmatrix}\qquad\text{ and }\qquad
h=\begin{pmatrix}
	I_2&0\\0&-I_2
\end{pmatrix}.$$
Then the centralizer $\g^f$ is generated by the matrices $f$, $f_1$, $f_\theta$ and $h_2$ where
$$f_1=\begin{pmatrix}
	0&0&0&0\\0&0&0&0\\0&-1&0&0\\0&0&0&0
\end{pmatrix},\quad f_\theta=\begin{pmatrix}
0&0&0&0\\0&0&0&0\\0&0&0&0\\-1&0&0&0
\end{pmatrix},\quad\text{and}\quad h_2=\text{diag}(1,-1,1,-1).$$

Moreover let $G^\natural$ the stabilizer in $G$ of the $\sl{2}$-triple $\{e,h,f\}$. We get
$$G^\natural=\left\{\begin{pmatrix}
	A&0\\0&A
\end{pmatrix},\, AU_2A^T=U_2\right\}\simeq O(2).$$
Hence, $A(\O_{subreg})\simeq O(2)/SO(2)=\{I_4,U\}\simeq\Z/2\Z$ where $U=\begin{pmatrix}
	U_2&0\\0&U_2
\end{pmatrix}$.
Then $U$ acts on $\g^f$ by
$$U.f=f,\qquad U.h_2=-h_2,\qquad U.f_1=f_\theta\qquad\text{and}\qquad U.f_\theta=f_1.$$
It induces an automorphism $\Phi$ of $\WA$ defined by
\begin{align*}
	\Phi(L)=L,\quad\Phi(J)=-J\quad\text{and}\quad \Phi(G^\pm)=G^\mp.
\end{align*}
\begin{Remark}
	We can describe all automorphisms of $\WA$ using OPEs. Indeed an automorphism of $\WA$ is an isomorphism of conformal vertex algebra and preserves OPEs, the conformal vector and the conformal weights. Thus any automorphism of $\WA$ is of the form $\Phi^+_\alpha$ or $\Phi^-_\alpha$ ($\alpha\in\C^*$) where
	$$\Phi^\pm_\alpha(J)=\pm J,\quad \Phi^\pm_\alpha(G^+)=\alpha G^\pm\quad\text{and}\quad \Phi^\pm_\alpha(G^-)=\frac{1}{\alpha}G^\mp.$$
	With this notation we have $\Phi=\Phi^-_1$. 
\end{Remark}

\begin{Proposition}
	Suppose that $k$ is an admissible level.
	The automorphism $\Phi$ induces an action on the set of simple $\SWA$-modules:
	$$\Phi(\Lxichiij{i,j}{s})\simeq L(-(\xi_{i,j}^{(s)}+i-1),\chi_{i,j}^{(s)}).$$
	Moreover $\Phi$ is an involution (i.e. $\Phi^2=\id$) which preserves pairwise the simple modules of the first form.
	If $k$ is principal admissible then $\Phi$ induces a one-to-one correspondence between the modules of the second form and those of the third form.
	If $k$ is coprincipal admissible, $\Phi$ induces an action of the simple modules of the second form (remember that there is only two forms of simple modules in this case).
\end{Proposition}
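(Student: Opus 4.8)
The plan is to realise $\Phi$ as a twisting operation on modules and to follow a highest weight vector through it. Being an automorphism of $\WA$, $\Phi$ fixes the maximal proper graded ideal and so descends to an automorphism of $\SWA$; for any $\SWA$-module $M$ the twisted module $\Phi(M)$ --- the same underlying space with action $Y_{\Phi(M)}(a,z):=Y_M(\Phi^{-1}(a),z)=Y_M(\Phi(a),z)$, the last equality because $\Phi^2=\mathrm{id}$ on the generators $L,J,G^\pm$ --- is again an $\SWA$-module, irreducible whenever $M$ is. Since $\Phi(L)=L$, the operator $L_0$ acts the same way on $M$ and on $\Phi(M)$, so $\Phi(M)$ is a positive energy representation with $\top{\Phi(M)}=\top{M}$ as a vector space; in particular if $M\simeq\Lxichiij{i,j}{s}$ then $\dim\top{\Phi(M)}=i$. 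As $\Phi^2=\mathrm{id}$ on $\WA$ we also get $\Phi^2(M)\simeq M$, so $\Phi$ is an involution on the finite set of simple $\SWA$-modules.

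Next I would locate the lowest-weight vector of $\Phi(\Lxichiij{i,j}{s})$. Let $v$ be the generator of $M=\Lxichiij{i,j}{s}$ furnished by Lemma \ref{Lem:existence}, so $\top{M}=M_{\chi^{(s)}_{i,j}}$ is spanned by $v,G^+_0v,\dots,(G^+_0)^{i-1}v$ with $(G^+_0)^iv=0$ and, using $[J_0,G^+_0]=G^+_0$, $J_0(G^+_0)^mv=(\xi^{(s)}_{i,j}+m)(G^+_0)^mv$. Put $v':=(G^+_0)^{i-1}v\neq 0$. From $\Phi(J)=-J$ and $\Phi(G^\pm)=G^\mp$ one reads off that, in the $\Phi(M)$-structure, $J_0$ acts as the original $-J_0$, the mode $G^-_0$ acts as the original $G^+_0$, and $L_n,J_n,G^\pm_n$ ($n>0$) act as themselves; since $v'\in\top M$ and $G^+_0v'=(G^+_0)^iv=0$, the vector $v'$ satisfies in $\Phi(M)$ exactly the relations of Lemma \ref{Lem:existence}, with $L_0$-eigenvalue $\chi^{(s)}_{i,j}$ and $J_0$-eigenvalue $-(\xi^{(s)}_{i,j}+i-1)$, the latter being the smallest $J_0$-eigenvalue on $\top{\Phi(M)}$. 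Hence $\Phi(\Lxichiij{i,j}{s})\simeq L\big(-(\xi^{(s)}_{i,j}+i-1),\,\chi^{(s)}_{i,j}\big)$; since the $\Lxichiij{i',j'}{s'}$ exhaust the simple $\SWA$-modules (Proposition \ref{2.7}) and are pairwise non-isomorphic, this pins down the target.

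It then remains to substitute the explicit values of Proposition \ref{2.4}. For the first form $-(\xi^{(1)}_{i,j}+i-1)=-\big(\frac{1-i}{2}+i-1\big)=\frac{1-i}{2}=\xi^{(1)}_{i,j}$ (and likewise for $s=1'$), so $\Phi$ fixes each first-form module. In the principal case $k=-3+p/3$, using $3k=p-9$ one finds $-(\xi^{(2)}_{i,j}+i-1)=\xi^{(3)}_{i,\,p-i-j}$ and $\chi^{(2)}_{i,j}=\chi^{(3)}_{i,\,p-i-j}$, whence $\Phi(\Lxichiij{i,j}{2})\simeq\Lxichiij{i,\,p-i-j}{3}$; the range $1\le j\le p-i-1$ of Proposition \ref{2.7}\ref{2.7P} is invariant under $j\mapsto p-i-j$, so this is a bijection between the second and third forms, its own inverse since $\Phi^2=\mathrm{id}$. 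In the coprincipal case $k=-3+p/4$, using $4k=p-12$ one finds $-(\xi^{(2')}_{i,j}+i-1)=\xi^{(2')}_{i,\,p-2i-j}$ and $\chi^{(2')}_{i,j}=\chi^{(2')}_{i,\,p-2i-j}$, so $\Phi(\Lxichiij{i,j}{2'})\simeq\Lxichiij{i,\,p-2i-j}{2'}$, an involution on the second form as $j\mapsto p-2i-j$ is an involution on $\{1,\dots,p-2i-1\}$.

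The only genuinely delicate step is the middle one: correctly transporting the defining relations of Lemma \ref{Lem:existence} through the twist and checking that $v'=(G^+_0)^{i-1}v$ is the new lowest-weight vector (the case $n=0$ of the relation $G^-_0v'=0$ being precisely $(G^+_0)^iv=0$). Everything else is bookkeeping with the formulas of Proposition \ref{2.4} and the substitutions $3k=p-9$, $4k=p-12$; that the candidate images $L(-(\xi^{(s)}_{i,j}+i-1),\chi^{(s)}_{i,j})$ again lie in the known list of simple modules is automatic from Proposition \ref{2.7}.
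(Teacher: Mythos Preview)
Your proof is correct and follows essentially the same approach as the paper: both identify $(G^+_0)^{i-1}\vectop$ as the new highest-weight vector in the twisted module, compute its $J_0$-eigenvalue to be $-(\xi^{(s)}_{i,j}+i-1)$ via $\Phi(J)=-J$, and then match the result against the classification of Proposition~\ref{2.7} by direct substitution. One small wording slip: for $n>0$ the modes $J_n,G^\pm_n$ in $\Phi(M)$ do \emph{not} act ``as themselves'' but rather as $-J_n,G^\mp_n$ of $M$; this is harmless since the relevant point is only that positive modes annihilate the top component, which you use correctly.
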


\begin{proof}
	Let $\Lxichiij{i,j}{s}$ be a simple module of $\SWA$. Since $\Phi$ fixes $L$, the $L_0$-weights of $\Lxichiij{i,j}{s}$ remain unchanged and so the top-component of the simple module under the action of $\Phi(\WA)$ is still spanned by the vectors $(G_0^+)^m|\xi_{i,j}^{(s)},\chi_{i,j}^{(s)}\rangle$, $0\leq m\leq i-1$. 
	Moreover $$-J_0(G_0^+)^m|\xi_{i,j}^{(s)},\chi_{i,j}^{(s)}\rangle=-(\xi+m)(G_0^+)^m|\xi_{i,j}^{(s)},\chi_{i,j}^{(s)}\rangle$$ for all $m$. Then the smallest eigenvalue of $-J_0$ in the top-component is $-(\xi+i-1)$ associated with the vector $(G_0^+)^{i-1}|\xi_{i,j}^{(s)},\chi_{i,j}^{(s)}\rangle$. Hence $$\Phi(\Lxichiij{i,j}{s})\simeq L(-(\xi_{i,j}^{(s)}+i-1),\chi_{i,j}^{(s)}).$$
	
	If $s=1,1'$ then $\xi_{i,j}^{(s)}=\frac{1-i}{2}$ and $-(\xi_{i,j}^{(s)}+i-1)=\xi_{i,j}^{(s)}$. Thus $\Phi(\Lxichiij{i,j}{s})\simeq\Lxichiij{i,j}{s}$ for $s=1,1'$.
	Moreover suppose that $k$ is principal admissible then,
	$$\Phi(\Lxichiij{i,j}{2})\simeq\Lxichiij{i,p-i-j}{3}\quad\text{and}\quad\Phi(\Lxichiij{i,p-i-j}{3})\simeq\Lxichiij{i,j}{2},$$
	and if $k$ is coprincipal admissible,
	$$\Phi(\Lxichiij{i,j}{2'})\simeq\Lxichiij{i,p-2i-j}{2'}.$$
	In particular $\Phi$ is an involution.
\end{proof}

\begin{Remark}
	It seems that the exceptional $\W$-algebra $\W_k(\sp{4},f_{subreg})$ is the first known example of rational $\W$-algebra such that the corresponding component group acts non-trivially on the set of the simple $\SWA$-modules.
\end{Remark}

\begin{Example}
	Let $k=-\frac{5}{3}$. We described in the Example~\ref{k-5/3} the nine simple $\W_{-\frac{5}{3}}(\g,f)$-modules. There $\Phi$ fixes the modules $L(0,0)$, $L\left(0,\frac{1}{2}\right)$ and $L\left(-\frac{1}{2},\frac{7}{16}\right)$ and permutes the others as follows:
	\begin{align*}
		L\left(-\frac{1}{3},\frac{1}{6}\right)&\overset{\Phi}{\leftrightarrow}L\left(\frac{1}{3},\frac{1}{6}\right), \\
		L\left(-\frac{2}{3},\frac{2}{3}\right)&\overset{\Phi}{\leftrightarrow}L\left(-\frac{1}{3},\frac{2}{3}\right), \\
		L\left(\frac{1}{6},\frac{5}{48}\right)&\overset{\Phi}{\leftrightarrow} L\left(-\frac{1}{6},\frac{5}{48}\right).
	\end{align*}
\end{Example}

\section{Characters of admissible highest weight modules}\label{sec:characters}

Let $\g$ be any simple complex Lie algebra and $f$ any even nilpotent element of $\g$ as in the beginning of the previous section.
For any weight $\hat{\lam}=\lam+k\Lam_0\in\affh^*$, let $\ch(\widehat{L}_k(\lam))$ be the formal character of $\widehat{L}_k(\lam)$:
$$\ch(\widehat{L}_k(\lam))=\sum_{\mu\in\affh^*}\e^\mu\dim\widehat{L}_k(\lam)_\mu,$$
where $\widehat{L}_k(\lam)_\mu$ denotes the $\mu$-weight space.
It was proved by Kac and Wakimoto~\cite{KW88} that for $\lam\in Pr^k$,
$$\ch(\widehat{L}_k(\lam))=\frac{1}{R}\sum_{w\in\widehat{W}(\hat{\lam})}\epsilon(w)\e^{w\circ\hat{\lam}},$$
where $R$ is the Weyl denominator for $\affg$ and $w\circ\hat{\lam}=w(\hat{\lam}+\hat{\rho})-\hat{\rho}$.
This character can be considered as a meromorphic function of $(\tau,z)\in\mathcal{H}\times\h$, with $\mathcal{H}=\{\tau\in\C,\text{Im}\,\tau>0\}$, taking
$$\ch(\widehat{L}_k(\lam))(\tau,z)=\frac{1}{R}\sum_{w\in\widehat{W}(\hat{\lam})}\epsilon(w)\e^{-2\pi i\IP{w\circ\hat{\lam}}{\tau D-z}}.$$

Given a smooth $\affg$-module $M$ of level $k$, one can extend it to a $\mathcal{C}(\g,f,k)$-module $\mathcal{C}(M):=M\otimes \mathcal{F}(\g_{>0})$, which gives rise to a complex $(\mathcal{C}(M),d_{(0)})$. Its homology $H_f(M)$ is a $\WA$-module. 
As in \cite{KRW03}, let $\ch_{H_f(M)}$ be the Euler-Poincaré character of $H_f(M)$:
\begin{equation}\label{chHM}
	\ch_{H_f(M)}(q,h)=\sum_{j\in\Z}(-1)^j\ch_{H^j_f(M)}(q,z)=\sum_{j\in\Z}(-1)^j\tr_{H^j_f(M)}q^{L_0}\e^{2\pi i J_0^{\{h\}}},
\end{equation}
with $q=e^{2\pi i\tau}$ and $h\in\h^f$.

\begin{Theorem}[{\cite[Theorem~3.1]{KRW03}}]\label{ThmchHM}
	Let $M$ be the highest weight $\affg$-module with the highest weight $\hat{\lam}$ of level $k\neq h^\vee$, and suppose that $\ch(M)$ extends to a meromorphic function on $\mathcal{H}\times\h$ with at most simple poles at the hyperplane $T_{\alpha}=\{h\in\affh,\,\alpha(h)=0\}$, $\alpha\in\widehat{\Delta}^{re}$. Then
	
	\begin{align*}
		\ch(H_f(M))(q,h)=&\frac{q^{\frac{\IP{\hat{\lam}}{\hat{\lam}+2\hat{\rho}}}{2(k+h^\vee)}}}{\prod_{j=1}^{\infty}(1-q^j)^{\dim\h}}(R\ch(M)(\tau,h))\\
		&\times\prod_{n=1}^{\infty}\prod_{\alpha\in\Delta_{+}, \IP{\alpha}{x_0}=0}(1-q^{n-1}\e^{-2\pi i\IP{\alpha}{h}})^{-1}(1-q^n\e^{2\pi i\IP{\alpha}{h}})^{-1},
	\end{align*}
	where $h\in\h^f$.
\end{Theorem}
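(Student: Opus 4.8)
The plan is to prove the formula by the Euler--Poincaré principle, following \cite{KRW03}. Since an Euler--Poincaré character does not change under passage to homology, it suffices to compute the supercharacter of the cochain complex $(\mathcal{C}(M),d_{(0)})$, where $\mathcal{C}(M)=M\otimes\Fock(\g_{>0})$, and to identify the asserted product in it. First I would refine the charge grading on $\mathcal{C}(M)$ to a trigrading by the (generalised) eigenvalues of $L_0$, of $J_0^{\{h\}}$, and by the charge. Because $L$ is chosen so that $d(z)$ is primary of conformal weight $1$, and because each $J^{\{a\}}(z)$ (in particular $J^{\{h\}}(z)$) is $d_{(0)}$-closed, the differential $d_{(0)}$ preserves the $(L_0,J_0^{\{h\}})$-bidegree while lowering the charge by one. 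Moreover every $(L_0,J_0^{\{h\}})$-homogeneous component of $\mathcal{C}(M)$ is finite dimensional: $M$ is a highest weight $\affg$-module, so its weight spaces are finite dimensional and its conformal weights bounded below, and $\Fock(\g_{>0})$ likewise has finite dimensional graded pieces with conformal weights bounded below; the meromorphy hypothesis on $\ch(M)$ ensures that the resulting formal series converges for $\tau\in\mathcal{H}$. Applying the ordinary Euler--Poincaré identity to each of these finite dimensional subcomplexes and summing --- the charge furnishing exactly the sign $(-1)^j$ of \eqref{chHM} --- gives
\[
\ch(H_f(M))(q,h)\;=\;\on{str}_{\mathcal{C}(M)}\,q^{L_0}\,\e^{2\pi i J_0^{\{h\}}}.
\]

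Next I would factor this supertrace. As a vector space $\mathcal{C}(M)=M\otimes\Fock(\g_{>0})$, and from $L(z)=L^\g(z)+\frac{d}{dz}x(z)+L^{ch}(z)$ together with $J^{\{h\}}(z)=h(z)+(\text{ghost bilinear})$ one sees that $L_0$ and $J_0^{\{h\}}$ each split as a ``matter'' operator on $M$ plus a ``ghost'' operator on $\Fock(\g_{>0})$, the $\frac{d}{dz}x(z)$ term contributing an $x_0$-grading shift on the matter sector. Hence the supertrace factors as $(\text{matter trace})\times(\text{ghost supertrace})$. For the matter factor, the Sugawara $L_0^\g$-eigenvalue of the highest weight vector of $M$ is $\IP{\hat{\lam}}{\hat{\lam}+2\hat{\rho}}/(2(k+h^\vee))$, and pulling it out identifies the matter trace with $q^{\IP{\hat{\lam}}{\hat{\lam}+2\hat{\rho}}/(2(k+h^\vee))}$ times the value of the meromorphic function $\ch(M)$ at the argument prescribed by the $\frac{d}{dz}x(z)$-twist; this is where the meromorphy hypothesis is genuinely used. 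For the ghost factor, using $\Fock(\g_{>0})\cong\Exterior(\varphi_{\alpha,n})_{n<0,\,\alpha\in S_+}\otimes\Exterior(\varphi^\alpha_m)_{m\leq0,\,\alpha\in S_+}$ together with the conformal weights of $\varphi_\alpha$ and $\varphi^\alpha$ (which are $m_\alpha$ and $1-m_\alpha$) read off from $L^{ch}$, the ghost supertrace is a convergent infinite product over $\alpha\in S_+$ and over the mode indices, each factor being of the form $(1-q^{\ast}\e^{2\pi i\ast})$ rather than its inverse, because every ghost oscillator is odd so its charge contributes the Euler--Poincaré sign.

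It remains to match $q^{\IP{\hat{\lam}}{\hat{\lam}+2\hat{\rho}}/(2(k+h^\vee))}\cdot(\text{matter character})\cdot(\text{ghost supertrace})$ with the right-hand side. After cancelling the common matter character, this reduces to an identity that no longer involves $M$: that the ghost supertrace equals $R$ divided by $\prod_{j\geq1}(1-q^j)^{\dim\h}\cdot\prod_{n\geq1}\prod_{\alpha\in\Delta_+,\,\IP{\alpha}{x_0}=0}(1-q^{n-1}\e^{-2\pi i\IP{\alpha}{h}})(1-q^{n}\e^{2\pi i\IP{\alpha}{h}})$. I would prove this using the product expansion $R=\e^{\hat{\rho}}\prod_{\alpha\in\widehat{\Delta}^{re}_+}(1-\e^{-\alpha})\cdot\prod_{n\geq1}(1-q^n)^{\dim\h}$: the imaginary-root factor cancels the denominator $\prod_{j\geq1}(1-q^j)^{\dim\h}$; the real roots split according to the sign of $\IP{\alpha}{x_0}$, those with $\IP{\alpha}{x_0}=0$ giving exactly the displayed $\g_0$-product; and the factors attached to real roots with $\IP{\alpha}{x_0}\neq0$ --- equivalently, to $\g_{>0}\oplus\g_{<0}$ --- are reproduced mode by mode by the ghost supertrace, since the ghosts $\varphi_\alpha,\varphi^\alpha$ are indexed precisely by a basis of $\g_{>0}$. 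I expect the main obstacle to lie exactly here: pinning down the $x_0$-twisted specialisation of $\ch(M)$ so that all powers of $q$ and exponentials $\e^{2\pi i\IP{\alpha}{h}}$ match, and carrying out the term-by-term cancellation of the ghost supertrace against the $\g_{\neq0}$-part of the affine Weyl denominator. By contrast, the Euler--Poincaré reduction of the first paragraph is routine once the finiteness of the graded pieces is in place.
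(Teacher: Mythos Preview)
The paper does not prove this theorem at all: it is simply quoted from \cite[Theorem~3.1]{KRW03} as a known input, so there is no ``paper's own proof'' to compare against. Your sketch follows the standard Euler--Poincar\'e argument of the original reference (compute the supercharacter of the complex $\mathcal{C}(M)$, factor it as matter times ghosts, then match the ghost supertrace against the affine Weyl denominator), and as a summary of that argument it is essentially correct.

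One small comment: you should be careful about the claim that each $J^{\{h\}}(z)$ is $d_{(0)}$-closed for arbitrary $h\in\h$. What is true and sufficient here is that $J^{\{h\}}$ is $d_{(0)}$-closed for $h\in\h^f$ (equivalently, $h\in\g_0^f$), which is exactly the hypothesis $h\in\h^f$ in the statement; for general $h$ this can fail. This does not affect your argument since you only need the bigrading by $L_0$ and $J_0^{\{h\}}$ for $h\in\h^f$.
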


Until the end of the article, $\g$ refers to as the simple Lie algebra $\sp{4}$ and $f=f_{subreg}$. In this particular case $\h^f$ is one-dimensional generated by $\alpha_2^\vee$, viewed as an element of $\h$.

Following \cite{KRW03}, for any simple $\SWA$-modules $\Lxichiij{i,j}{s}$ described in Proposition~\ref{2.7}, one can associate the formal character
$$\ch_{\Lxichiij{i,j}{s}}(q,z):=\tr_{\Lxichiij{i,j}{s}}q^{L_0}z^{J_0}
=\sum_{d\in\chi^{(s)}_{i,j}+\Z_{\geq 0},\atop a\in\xi^{(s)}_{i,j}+\Z}\dim\Lxichiij{i,j}{s}_{a,d}q^{d}z^{a},$$
where $q=\e^{2\pi i\tau}$ and $z=e^{-2\pi i\nu}$, $\nu\in\C$.

Recall from the proof of Proposition~\ref{correspondence} that the simple modules $\Lxichiij{i,j}{1}$ (if $k$ is principal) and $\Lxichiij{i,j}{1'}$ (if $k$ is coprincipal) corresponds to some highest weight $\affg$-modules:
\begin{align}\label{isom_highest_weight}
	\Lxichiij{i,j}{s}\simeq H^0_f(\widehat{L}_k({\lambda}_{i,j}^{(s)})),
\end{align}
with $s=1,1'$ and $\lambda_{i,j}^{(s)}:=\lambda_{i,j}=(j-1)\varpi_1+(i-1)\varpi_2\in P_{0,+}$.

\begin{Proposition}
	\begin{enumerate}[font=\upshape,label=(\alph*)]
		\item Let $k=-3+p/3$ with $\gcd{p}{3}=1$, $p\geq3$ and $\xi^{(1)}_{i,j}$ and $\chi^{(1)}_{i,j}$ as in Proposition~\ref{2.7}\ref{2.7P} such that $\Lxichiij{i,j}{1}$ is a simple $\SWA$-module. Then
		\begin{align*}
			\ch_{\Lxichiij{i,j}{1}}(q,z)&=\frac{q^{\chi^{(1)}_{i,j}-\xi^{(1)}_{i,j}+j-1}}{\prod_{n\in\Z_{>0}}(1-q^n)^2}\prod_{n\in\Z_{>0}}(1-q^{n-1}z)^{-1}(1-q^nz^{-1})^{-1}\nonumber\\&\qquad\times\sum_{w\in\widehat{W}(k\Lambda_0)}\epsilon(w)q^{-(w(\hat{\lambda}_{i,j}+\hat{\rho})-\hat{\rho }|D+x_0)}z^{-(w(\hat{\lambda}_{i,j}+\hat{\rho})-\hat{\rho}|\frac{\alpha_2^\vee}{2})}.
		\end{align*}
		\item Let $k=-3+p/4$ with $\gcd{p}{2}=1$, $p\geq4$ and $\xi^{(1')}_{i,j}$ and $\chi^{(1')}_{i,j}$ as in Proposition~\ref{2.7}\ref{2.7CP} such that $\Lxichiij{i,j}{1'}$ is a simple $\SWA$-module. Then
		\begin{align*}
			\ch_{\Lxichiij{i,j}{1'}}(q,z)&=\frac{q^{\chi^{(1')}_{i,j}-\xi^{(1')}_{i,j}+j-1}}{\prod_{n\in\Z_{>0}}(1-q^n)^2}\prod_{n\in\Z_{>0}}(1-q^{n-1}z)^{-1}(1-q^nz^{-1})^{-1}\nonumber\\&\qquad\times\sum_{w\in\widehat{W}(k\Lambda_0)}\epsilon(w)q^{-(w(\hat{\lambda}_{i,j}+\hat{\rho})-\hat{\rho }|D+x_0)}z^{-(w(\hat{\lambda}_{i,j}+\hat{\rho})-\hat{\rho}|\frac{\alpha_2^\vee}{2})}.
		\end{align*}
	\end{enumerate}
\end{Proposition}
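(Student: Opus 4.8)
The plan is to obtain both character formulas as a single computation, by feeding the highest weight $\affg$-modules $\widehat{L}_k(\lambda_{i,j})$ that appear in \eqref{isom_highest_weight} into the Euler--Poincaré character formula of Theorem~\ref{ThmchHM}. First I would fix $s\in\{1,1'\}$ and take $(i,j)$ in the ranges of Proposition~\ref{2.7} for which $\Lxichiij{i,j}{s}$ is simple, so that $\lambda_{i,j}=(j-1)\varpi_1+(i-1)\varpi_2$. By the proof of Proposition~\ref{correspondence} we already know $\lambda_{i,j}\in P_{0,+}$ and $\hat\lambda_{i,j}=\lambda_{i,j}+k\Lambda_0\in Pr^k$, and $\Lxichiij{i,j}{s}\simeq H^0_f(\widehat{L}_k(\lambda_{i,j}))$, so the whole question reduces to evaluating Theorem~\ref{ThmchHM} on $M=\widehat{L}_k(\lambda_{i,j})$.

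Before applying Theorem~\ref{ThmchHM} I would check its hypotheses and reconcile its left-hand side with the honest character $\ch_{\Lxichiij{i,j}{s}}(q,z)=\tr q^{L_0}z^{J_0}$. Since $\hat\lambda_{i,j}\in Pr^k$, the Kac--Wakimoto formula \cite{KW88} gives $\ch(\widehat{L}_k(\lambda_{i,j}))=\tfrac1R\sum_{w\in\widehat{W}(\hat\lambda_{i,j})}\epsilon(w)\e^{w\circ\hat\lambda_{i,j}}$, which in particular extends to a meromorphic function on $\mathcal{H}\times\h$ with at most simple poles along the real root hyperplanes, so Theorem~\ref{ThmchHM} applies. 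That theorem computes the Euler--Poincaré character $\sum_j(-1)^j\ch(H^j_f(\widehat{L}_k(\lambda_{i,j})))$, and this equals $\ch_{\Lxichiij{i,j}{s}}$ precisely because $H^j_f(\widehat{L}_k(\lambda_{i,j}))=0$ for $j\neq0$ — the vanishing of higher Drinfeld--Sokolov cohomology on admissible modules that already underlies Theorem~\ref{th7.8} (see \cite{AvE19}, and \cite{Ara15a}).

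It then remains to specialize to $\g=\sp{4}$, $f=f_{subreg}$ and carry out the change of variables $q=\e^{2\pi i\tau}$, $z=\e^{-2\pi i\nu}$. Here $\dim\h=2$, producing the factor $\prod_{n>0}(1-q^n)^{-2}$; and since $x_0=\varpi_1$, the positive roots with $\IP{\alpha}{x_0}=0$ are exactly $\{\alpha_2\}$, so the last product in Theorem~\ref{ThmchHM} collapses to a single factor which, upon choosing $h\in\h^f=\C\alpha_2^\vee$ so that $\e^{2\pi iJ_0^{\{h\}}}=z^{J_0}$ (recall $J=J^{\{h_2\}}=\tfrac12 J^{h_2}$, $h_2=\alpha_2^\vee$), becomes $\prod_{n>0}(1-q^{n-1}z)^{-1}(1-q^nz^{-1})^{-1}$. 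With the same $h$ the factor $R\ch(\widehat{L}_k(\lambda_{i,j}))(\tau,h)=\sum_{w}\epsilon(w)\e^{-2\pi i\IP{w\circ\hat\lambda_{i,j}}{\tau D-h}}$ turns into a sum of monomials in $q$ and $z$; two bookkeeping points are used here, namely that the exponent $D+x_0$ rather than $D$ appears because the $\W$-algebra conformal grading $L_0$ is the affine Sugawara $L_0$ shifted by the $x_0$-grading (the $\partial x$ summand of $L=L^\g+\partial x+L^{ch}$ contributes $-x_0$ to $L_0$), and the half-coroot $\tfrac12\alpha_2^\vee$ appears because $J_0=\tfrac12 J^{h_2}_0$. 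Finally, the scalar prefactor $q^{\frac{\IP{\hat\lambda_{i,j}}{\hat\lambda_{i,j}+2\hat\rho}}{2(k+h^\vee)}}$ together with the leftover fermionic contribution reassembles into $q^{\chi^{(s)}_{i,j}-\xi^{(s)}_{i,j}+j-1}$; this identity is exactly the conformal-weight computation \cite[(7.4)]{AvE19} used in the proof of Proposition~\ref{correspondence} (where $\chi^{(s)}_{i,j}=h_{\lambda_{i,j}-\frac pq x_0}$), combined with the identification of $\xi^{(s)}_{i,j}$ as the lowest $J_0$-weight. The sum may be rewritten over $\widehat{W}(k\Lambda_0)$ in place of $\widehat{W}(\hat\lambda_{i,j})$ because $\lambda_{i,j}$ is an integral weight of $\g$, whence $\IP{\lambda_{i,j}}{\delta}=0$ forces $\widehat{\Delta}(\hat\lambda_{i,j})=\widehat{\Delta}(k\Lambda_0)$ and hence $\widehat{W}(\hat\lambda_{i,j})=\widehat{W}(k\Lambda_0)$; and cases (a) and (b) are the identical computation, only the admissible ranges of $(i,j)$ differing.

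The bulk of the work lies in the change of variables of the previous paragraph, which is routine but delicate; the single genuinely non-formal ingredient is the vanishing $H^j_f(\widehat{L}_k(\lambda_{i,j}))=0$ for $j\neq0$, which I would invoke from the admissibility theory rather than reprove. I expect the only real danger of error to be in pinning down the signs and normalizations in the $D+x_0$ shift and the $\tfrac12\alpha_2^\vee$ factor, where one should cross-check against the lowest $L_0$- and $J_0$-weights already recorded in Propositions~\ref{2.4} and \ref{correspondence}.
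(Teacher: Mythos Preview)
Your proposal is correct and follows essentially the same route as the paper: identify $\Lxichiij{i,j}{s}$ with $H^0_f(\widehat{L}_k(\lambda_{i,j}))$ via Proposition~\ref{correspondence}, invoke the vanishing $H^j_f(\widehat{L}_k(\lambda_{i,j}))=0$ for $j\neq 0$ so that the Euler--Poincar\'e character of Theorem~\ref{ThmchHM} computes the honest character, then specialize to $\g=\sp{4}$, $f=f_{subreg}$ and unwind the variables. The only minor discrepancy is the citation for the cohomology vanishing: the paper invokes \cite[Theorem~5.5.2]{Ara11} (which applies because $\lambda_{i,j}\in P_{0,+}$) rather than \cite{AvE19} or \cite{Ara15a}.
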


\begin{proof}
It is an immediate consequence of \cite[Theorem~5.5.2]{Ara11} that for $\lam\in P_{0,+}$, $H^i_f(\widehat{L}_k(\lam))=0$ for all $i\neq0$. 
Hence $\ch_{H_f(\widehat{L}_k({\lambda}_{i,j}^{(s)}))}=\ch_{H^0_f(\widehat{L}_k({\lambda}_{i,j}^{(s)}))}$.
According to the isomorphism \eqref{isom_highest_weight} and Theorem~\ref{ThmchHM}, we get that for $s=1,1'$, 
	\begin{align*}
	\ch_{\Lxichiij{i,j}{s}}(q,z)&=\ch_{H_f(\widehat{L}_k({\lambda}_{i,j}^{(s)}))}(q,-\nu \frac{\alpha_2^\vee}{2})\\
	&=\frac{q^{\frac{\IP{\hat{\lambda}_{i,j}}{\hat{\lambda}_{i,j}+2\rho}}{2(k+h^\vee)}}}{\prod_{n\in\Z_{>0}}(1-q^n)^2}\prod_{n\in\Z_{>0}}(1-q^{n-1}z)^{-1}(1-q^nz^{-1})^{-1}\widehat{R}\ch_{\widehat{L}_k({\lambda}_{i,j}^{(s)})}(q,z)\\	&=\frac{q^{\chi^{(s)}_{i,j}+\IP{x_0}{\hat{\lambda}_{i,j}}}}{\prod_{n\in\Z_{>0}}(1-q^n)^2}\prod_{n\in\Z_{>0}}(1-q^{n-1}z)^{-1}(1-q^nz^{-1})^{-1}\\
	&\qquad\times\sum_{w\in\widehat{W}(\hat{\lambda}_{i,j})}\epsilon(w)e^{2\pi i(w(\hat{\lambda}_{i,j}+\hat{\rho})-\hat{\rho}|-\tau D-\tau x_0-\nu\frac{\alpha_2^\vee}{2})}\\
	&=\frac{q^{\chi^{(s)}_{i,j}+\IP{x_0}{\hat{\lambda}_{i,j}}}}{\prod_{n\in\Z_{>0}}(1-q^n)^2}\prod_{n\in\Z_{>0}}(1-q^{n-1}z)^{-1}(1-q^nz^{-1})^{-1}\\
	&\qquad\times\sum_{w\in\widehat{W}(k\Lambda_0)}\epsilon(w)q^{-(w(\hat{\lambda}_{i,j}+\hat{\rho})-\hat{\rho}|D+x_0)}z^{-(w(\hat{\lambda}_{i,j}+\hat{\rho})-\hat{\rho}|\frac{\alpha_2^\vee}{2})}.
	\end{align*}
\end{proof}

Using the $\psi$-action, it is possible to get a similar expression for the other simple $\SWA$-modules. Indeed since $\psi$ is an isomorphism of vector spaces, it sends an eigenspace $(\Lxichiij{i,j}{s})_{d,a}$ to an eigenspace $(\psi(\Lxichiij{i,j}{s}))_{d',a'}$.

\begin{Lemma}\label{dim}
	Suppose that $k$ is an admissible level.
	Let $\xi^{(s)}_{i,j}$ and $\chi^{(s)}_{i,j}$ be such that $\Lxichiij{i,j}{s}$ is an irreducible $\SWA$-module as in Proposition~\ref{2.7}. Set $\Lxichiij{l,m}{r}:=\psi(\Lxichiij{i,j}{s})$. Then for $n\in\Z_{\geq0}$ and $a\in\Z$,
	\begin{align*}
		\dim(\Lxichiij{l,m}{r})_{\chi^{(r)}_{l,m}+n,\xi^{(r)}_{l,m}+a}=\dim(\Lxichiij{i,j}{s})_{\chi^{(s)}_{i,j}+n+a,\xi^{(s)}_{i,j}+a+i-1}.
	\end{align*}
\end{Lemma}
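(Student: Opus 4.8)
The plan is to use the fact that, by construction, $\psi(M)$ has the same underlying vector space as $M$; only the action of the modes is twisted. Proving the identity therefore amounts to tracking how the joint $L_0$--$J_0$-eigenspaces of $M$ are relabelled in $\psi(M)$. Since $k$ is admissible, $\SWA$ is lisse by Proposition~\ref{Pro:associated_variety_W-algebra}, so $L_0$ acts semisimply with finite-dimensional eigenspaces on every simple module; together with the semisimplicity of $J_0$ from Lemma~\ref{Lem:existence} and the relation $[L_0,J_0]=0$, each bigraded component $M_{d,a}$ is a genuine finite-dimensional simultaneous eigenspace. The same holds for $\psi(M)$, because $\psi(L_0)$ and $\psi(J_0)$ are again commuting semisimple operators.

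First I would record the effect of $\psi$ on the two relevant zero-modes. From the relations of Section~\ref{sec:twist}, on $\psi(M)$ the operator $J_0$ acts as $J_0-(2+k)$ and $L_0$ acts as $L_0-J_0+\tfrac{2+k}{2}$, where on the right-hand sides the operators act through the original $M$-action. Thus a vector $w\in M$ with $J_0w=\alpha w$ and $L_0w=\delta w$ becomes, viewed in $\psi(M)$, a vector with $J_0$-eigenvalue $\alpha-(2+k)$ and $L_0$-eigenvalue $\delta-\alpha+\tfrac{2+k}{2}$. Inverting this, for any $(d',a')$ one obtains the identification of subspaces
\[(\psi(M))_{d',a'}=M_{\,d'+a'+\frac{2+k}{2}\,,\ a'+(2+k)}.\]

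It then remains to specialize to $M=\Lxichiij{i,j}{s}$, so that by definition $\Lxichiij{l,m}{r}=\psi(M)$, and to substitute $d'=\chi^{(r)}_{l,m}+n$, $a'=\xi^{(r)}_{l,m}+a$ into the identification above. Recall from Section~\ref{sec:twist} that, writing $i=\dim\top{M}$, one has $\psi(M)\simeq L\bigl(\xi^{(s)}_{i,j}+(i-1)-(2+k),\,\chi^{(s)}_{i,j}-\xi^{(s)}_{i,j}-(i-1)+\tfrac{2+k}{2}\bigr)$, hence
\[\xi^{(r)}_{l,m}=\xi^{(s)}_{i,j}+(i-1)-(2+k),\qquad \chi^{(r)}_{l,m}=\chi^{(s)}_{i,j}-\xi^{(s)}_{i,j}-(i-1)+\tfrac{2+k}{2}.\]
With these values the second index becomes $a'+(2+k)=\xi^{(s)}_{i,j}+a+i-1$, and the first index becomes $d'+a'+\tfrac{2+k}{2}=\bigl(\chi^{(r)}_{l,m}+\xi^{(r)}_{l,m}+\tfrac{2+k}{2}\bigr)+n+a=\chi^{(s)}_{i,j}+n+a$, the bracketed sum collapsing to $\chi^{(s)}_{i,j}$ once the $(2+k)$-terms cancel. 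This is exactly the asserted equality (and for the values of $(n,a)$ where one side vanishes, so does the other, by the same bijection of eigenspaces).

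The argument is elementary; the only thing requiring care is the direction of the twist — keeping in mind that $\psi(M)$ carries the same vectors as $M$ but with the modes acting through $\psi$, so that the dimensions of the bigraded pieces are merely permuted, not altered — together with the bookkeeping needed to verify that the $(2+k)$-contributions cancel in the conformal-weight coordinate.
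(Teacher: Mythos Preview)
Your proof is correct and follows essentially the same approach as the paper: both arguments use that $\psi(M)$ has the same underlying vector space as $M$, compute the shift in the $(L_0,J_0)$-eigenvalues induced by $\psi(L_0)=L_0-J_0+\tfrac{2+k}{2}$ and $\psi(J_0)=J_0-(2+k)$, and then substitute the explicit formulas $\xi^{(r)}_{l,m}=\xi^{(s)}_{i,j}+(i-1)-(2+k)$ and $\chi^{(r)}_{l,m}=\chi^{(s)}_{i,j}-\xi^{(s)}_{i,j}-(i-1)+\tfrac{2+k}{2}$. The only cosmetic difference is that the paper computes the twisted eigenvalues of a vector in $M$ and then reindexes, whereas you invert the relation first; the content is the same.
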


\begin{proof}
	Let $m\in(\Lxichiij{i,j}{s})_{\chi^{(s)}_{i,j}+n,\xi^{(s)}_{i,j}+a}$. Then
	\begin{align*}
		\psi(L_0)m&=(\chi^{(s)}_{i,j}-\xi^{(s)}_{i,j}+n-a+\frac{2+k}{2})m=(\chi^{(r)}_{l,m}+n-a+(i-1))m,\\
		\psi(J_0)m&=(\xi^{(s)}_{i,j}+a-(2+k))m=(\xi^{(r)}_{l,m}+a-(i-1))m.
	\end{align*}
	Hence $(\Lxichiij{i,j}{s})_{\chi^{(s)}_{i,j}+n,\xi^{(s)}_{i,j}+a}\simeq (\Lxichiij{l,m}{r})_{\chi^{(r)}_{l,m}+n-a+(i-1),\xi^{(r)}_{l,m}+a-(i-1)}$.
\end{proof}

\begin{Remark}
	By induction we get that for all $n\in\Z_{\geq0}, a\in\Z$, if $k=-3+p/3$, $\gcd{p}{3}=1$, $p\geq3$, then
	\begin{align*}
		(\Lxichiij{i,j}{s})_{\chi^{(s)}_{i,j}+n,\xi^{(s)}_{i,j}+a}&\simeq(\psi^6(\Lxichiij{i,j}{s}))_{\chi^{(s)}_{i,j}+n,\xi^{(s)}_{i,j}+a}\\
		&\simeq (\Lxichiij{i,j}{s}))_{\chi^{(s)}_{i,j}+6\xi^{(s)}_{i,j}+n+6a+18(2+k),\xi^{(s)}_{i,j}+a+6(2+k)},
	\end{align*}
	and if $k=-3+p/4$, $\gcd{p}{2}=1$, $p\geq4$, then
		\begin{align*}
		(\Lxichiij{i,j}{s})_{\chi^{(s)}_{i,j}+n,\xi^{(s)}_{i,j}+a}&\simeq(\psi^4(\Lxichiij{i,j}{s}))_{\chi^{(s)}_{i,j}+n,\xi^{(s)}_{i,j}+a}\\
		&\simeq (\Lxichiij{i,j}{s}))_{\chi^{(s)}_{i,j}+4\xi^{(s)}_{i,j}+n+4a+8(2+k),\xi^{(s)}_{i,j}+a+4(2+k)}.
	\end{align*}
\end{Remark}

\begin{Proposition}\label{psi_character}
	Suppose that $k$ is an admissible level. Let $\xi^{(s)}_{i,j}$ and $\chi^{(s)}_{i,j}$ be such that $\Lxichiij{i,j}{s}$ is a simple $\SWA$-module as in Proposition~\ref{2.7}.
	Then,
	\begin{equation*}
		\ch_{\psi(\Lxichiij{i,j}{s})}(q,z)=q^{\frac{2+k}{2}}z^{-(2+k)}\ch_{\Lxichiij{i,j}{s}}(q,q^{-1}z).
	\end{equation*}	
\end{Proposition}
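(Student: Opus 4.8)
The plan is to deduce the identity directly from the definition of the character together with the $n=0$ specializations of the relations for the twist $\psi$ displayed in Section~\ref{sec:twist}, namely $\psi(L_0)=L_0-J_0+\frac{2+k}{2}$ and $\psi(J_0)=J_0-(2+k)$.

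First I would fix $M:=\Lxichiij{i,j}{s}$. Since $M$ is a simple positive energy $\SWA$-module with finite-dimensional graded pieces (for instance by the lisse property, Proposition~\ref{Pro:associated_variety_W-algebra}), Lemma~\ref{Lem:existence} applies: $J_0$ and $L_0$ act semisimply and commute on $M$, with joint eigenspaces $M_{a,d}$ of finite dimension, $a\in\xi^{(s)}_{i,j}+\Z$ and $d\in\chi^{(s)}_{i,j}+\Z_{\geq0}$. Hence $\ch_M(q,z)=\tr_M q^{L_0}z^{J_0}=\sum_{a,d}\dim M_{a,d}\,q^{d}z^{a}$ is a well-defined formal expression, and the same holds for $\psi(M)$, since twisting only shifts the bigrading (the precise shift being recorded in Lemma~\ref{dim}).

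Next, observing that on the twisted module $\psi(M)$ the operators $L_0$ and $J_0$ act by $\psi(L_0)$ and $\psi(J_0)$ on the underlying space $M$, and using that $L_0$ and $J_0$ commute, I would compute, term by term over the $M_{a,d}$,
\begin{align*}
\ch_{\psi(M)}(q,z)&=\tr_M q^{\psi(L_0)}z^{\psi(J_0)}=\tr_M q^{L_0-J_0+\frac{2+k}{2}}z^{J_0-(2+k)}\\
&=q^{\frac{2+k}{2}}z^{-(2+k)}\,\tr_M q^{L_0}q^{-J_0}z^{J_0}=q^{\frac{2+k}{2}}z^{-(2+k)}\,\tr_M q^{L_0}(q^{-1}z)^{J_0},
\end{align*}
and the last trace is exactly $\ch_M(q,q^{-1}z)$, which yields the claimed formula.

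The argument is purely formal, so I do not anticipate a genuine obstacle; the single point deserving a sentence is the legitimacy of rearranging the (infinite) trace above, which is ensured by the finite-dimensionality of the joint $L_0$/$J_0$-eigenspaces, that is, by the lisse property. Alternatively, one may bypass the trace manipulation altogether and obtain the formula by summing the dimension identities of Lemma~\ref{dim} against the monomials $q^{d}z^{a}$.
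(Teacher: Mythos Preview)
Your proof is correct and follows essentially the same approach as the paper: both rely on the shift formulas $\psi(L_0)=L_0-J_0+\frac{2+k}{2}$ and $\psi(J_0)=J_0-(2+k)$ to relate the characters. The paper carries this out by explicitly reindexing the double sum via the eigenspace identity of Lemma~\ref{dim}, whereas you package the same computation as a single trace manipulation; as you yourself note at the end, summing Lemma~\ref{dim} against $q^d z^a$ recovers exactly the paper's argument.
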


\begin{proof}
	Denote $\psi(\chi^{(s)}_{i,j}):=\chi^{(s)}_{i,j}-\xi^{(s)}_{i,j}-(i-1)+\frac{2+k}{2}$ and $\psi(\xi^{(s)}_{i,j}):=\xi^{(s)}_{i,j}+(i-1)-(2+k)$.
	Using Lemma~\ref{dim} we get
	\begin{align*}
		\ch&_{\psi(\Lxichiij{i,j}{s})}(q,z)=\sum_{n\in\Z_{\geq0}\atop a\in\Z}\dim(\psi(\Lxichiij{i,j}{s}))_{\psi(\chi^{(s)}_{i,j})+n,\psi(\xi^{(s)}_{i,j})+a}q^{\psi(\chi^{(s)}_{i,j})+n}z^{\psi(\xi^{(s)}_{i,j})+a}\\
		&=\sum_{n\in\Z_{\geq0}\atop a\in\Z}\dim(\Lxichiij{i,j}{s})_{\chi^{(s)}_{i,j}+n+a,\xi^{(s)}_{i,j}+a+i-1}q^{\chi^{(s)}_{i,j}-\xi^{(s)}_{i,j}-(i-1)+\frac{2+k}{2}+n}z^{\xi^{(s)}_{i,j}+(i-1)-(2+k)+a}\\
		&=q^{\frac{2+k}{2}}z^{-(2+k)}\ch_{\Lxichiij{i,j}{s}}(q,q^{-1}z).
	\end{align*}
\end{proof}

\begin{Corollary}\label{chtwist}
	\begin{enumerate}[font=\upshape,label=(\alph*)]
		\item Let $k=-3+p/3$ with $\gcd{p}{3}=1$, $p\geq3$, $s=2,3$ and $\xi^{(s)}_{i,j}$ and $\chi^{(s)}_{i,j}$ as in Proposition~\ref{2.7}\ref{2.7P} such that $\Lxichiij{i,j}{s}$ is a simple $\SWA$-module. Then
		\begin{align*}
			&\ch_{\Lxichiij{i,j}{2}}(q,z)=\frac{q^{\chi^{(2)}_{i,j}+2p-2i-\frac{j+1}{2}}z^{-\frac{2p}{3}}}{\prod_{n\in\Z_{>0}}(1-q^n)^2}\prod_{n\in\Z_{>0}}(1-q^{n-1}z)^{-1}(1-q^nz^{-1})^{-1}\\
			&\hspace{3cm}\times\sum_{w\in\widehat{W}(k\Lambda_0)}\epsilon(w)q^{-(w(\hat{\lambda}_{j,p-i-j}+\hat{\rho})-\hat{\rho}|D+x_0-\alpha_2^\vee)}z^{-(w(\hat{\lambda}_{j,p-i-j}+\hat{\rho})-\hat{\rho}|\frac{\alpha_2^\vee}{2})}\\
			&=\frac{q^{\chi^{(2)}_{i,j}+2p-2i-\frac{j}{2}}z^{-\frac{2p}{3}+\frac{1}{2}}}{\prod_{n\in\Z_{>0}}(1-q^n)^2}\prod_{n\in\Z_{>0}}(1-q^{n-1}z)^{-1}(1-q^nz^{-1})^{-1}\\
			&\quad\times\sum_{w\in W\atop \eta\in Q^\vee}\epsilon(wt_{3\eta})q^{-(w(\lambda_{j,p-i-j}+\rho+p\eta)|x_0-\alpha_2^\vee)+3(\eta|\lambda_{j,p-i-j}+\rho)+\frac{3}{2}|\eta|^2p}\, z^{-(w(\lambda_{j,p-i-j}+\rho+p\eta)|\frac{\alpha_2^\vee}{2})},
		\end{align*}
		and
		\begin{align*}
			&\ch_{\Lxichiij{i,j}{3}}(q,z)=-\frac{q^{\chi^{(3)}_{i,j}+p-j-1}z^{-\frac{p}{3}}}{\prod_{n\in\Z_{>0}}(1-q^n)^2}\prod_{n\in\Z_{>0}}(1-q^{n-1}z)^{-1}(1-q^nz^{-1})^{-1}\nonumber\\
			&\hspace{3cm}\times\sum_{w\in\widehat{W}(k\Lambda_0)}\epsilon(w)q^{-(w(\hat{\lambda}_{p-i-j,i}+\hat{\rho})-\hat{\rho}|D+x_0-\frac{\alpha_2^\vee}{2})}z^{-(w(\hat{\lambda}_{p-i-j,i}+\hat{\rho})-\hat{\rho}|\frac{\alpha_2^\vee}{2})}\\
			&=-\frac{q^{\chi^{(3)}_{i,j}+p-j}z^{-\frac{p}{3}+\frac{1}{2}}}{\prod_{n\in\Z_{>0}}(1-q^n)^2}\prod_{n\in\Z_{>0}}(1-q^{n-1}z)^{-1}(1-q^nz^{-1})^{-1}\nonumber\\
			&\quad\times\sum_{w\in W\atop \eta\in Q^\vee}\epsilon(wt_{3\eta})q^{-(w(\lambda_{p-i-j,i}+\rho+p\eta)|x_0-\frac{\alpha_2^\vee}{2})+3(\eta|\lambda_{p-i-j,i}+\rho)+\frac{3}{2}|\eta|^2p}\, z^{-(w(\lambda_{p-i-j,i}+\rho+p\eta)|\frac{\alpha_2^\vee}{2})}.
		\end{align*}
		\item Let $k=-3+p/4$ with $\gcd{p}{2}=1$, $p\geq4$ and $\xi^{(2')}_{i,j}$ and $\chi^{(2')}_{i,j}$ as in Proposition~\ref{2.7}\ref{2.7CP} such that $\Lxichiij{i,j}{2'}$ is a simple $\SWA$-module. Then
		\begin{align*}
			&\ch_{\Lxichiij{i,j}{2'}}(q,z)=-\frac{q^{\chi^{(2')}_{i,j}+i+j-2}z^{\frac{p}{4}}}{\prod_{n\in\Z_{>0}}(1-q^n)^2}\prod_{n\in\Z_{>0}}(1-q^{n-1}z)^{-1}(1-q^nz^{-1})^{-1}\nonumber\\
			&\hspace{3cm}\times\sum_{w\in\widehat{W}(k\Lambda_0)}\epsilon(w)q^{-(w(\hat{\lambda}_{j,i}+\hat{\rho})-\hat{\rho}|D+x_0+\frac{\alpha_2^\vee}{2})}z^{-(w(\hat{\lambda}_{j,i}+\hat{\rho})-\hat{\rho}|\frac{\alpha_2^\vee}{2})}\\
			&=-\frac{q^{\chi^{(2')}_{i,j}+i+j}z^{\frac{p}{4}+\frac{1}{2}}}{\prod_{n\in\Z_{>0}}(1-q^n)^2}\prod_{n\in\Z_{>0}}(1-q^{n-1}z)^{-1}(1-q^nz^{-1})^{-1}\nonumber\\
			&\quad\times\sum_{w\in W\atop \eta\in Q^\vee}\epsilon(wt_{4\eta})q^{-(w(\lambda_{j,i}+\rho+p\eta)|x_0+\frac{\alpha_2^\vee}{2})+4(\eta|\lambda_{j,i}+\rho)+2|\eta|^2p}\, z^{-(w(\lambda_{j,i}+\rho+p\eta)|\frac{\alpha_2^\vee}{2})}.
		\end{align*}
	\end{enumerate}
\end{Corollary}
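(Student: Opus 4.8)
The plan is to deduce these formulas from the one already established for $\Lxichiij{i,j}{1}$ and $\Lxichiij{i,j}{1'}$ (the Proposition proved above that computes those two characters) by transporting it along the twist functor $\psi$ by means of Proposition~\ref{psi_character}. Reading off the $\psi$-orbits from the Remark following Proposition~\ref{2.7}, one identifies the representatives
\[\Lxichiij{i,j}{3}\simeq\psi\big(\Lxichiij{p-i-j,i}{1}\big),\qquad \Lxichiij{i,j}{2}\simeq\psi^{2}\big(\Lxichiij{j,p-i-j}{1}\big)\]
in the principal case, and $\Lxichiij{i,j}{2'}\simeq\psi^{-1}(\Lxichiij{j,i}{1'})$ in the coprincipal case; here one must read the orbit with care, since $\psi$ has order $4$ on the set of simple modules in the coprincipal case so that $\psi^{-1}=\psi^{3}$, and it is easy to pick the wrong $\Lxichiij{a,b}{1'}$. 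Iterating Proposition~\ref{psi_character} then gives, for every $m\in\Z$ and every simple positive-energy $\SWA$-module $L$,
\[\ch_{\psi^{m}(L)}(q,z)=q^{m^{2}(2+k)/2}\,z^{-m(2+k)}\,\ch_{L}(q,q^{-m}z).\]

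The substantial part is then a bookkeeping computation: one plugs the explicit formula of the Proposition above, with $z$ replaced by $q^{-m}z$, into the right-hand side, and simplifies by two re-indexings. First, $\prod_{n>0}(1-q^{n-1}z)^{-1}(1-q^{n}z^{-1})^{-1}$ turns into $\prod_{n>0}(1-q^{n-1-m}z)^{-1}(1-q^{n+m}z^{-1})^{-1}$, and telescoping the finitely many spurious factors restores the standard shape at the cost of an explicit sign $(-1)^{m}$ (this is where the overall minus signs in the $s=3$ and $s=2'$ formulas come from) and of a monomial prefactor in $q$ and $z$. Second, each term $q^{-(\mu|D+x_{0})}z^{-(\mu|\alpha_{2}^{\vee}/2)}$ of the affine Weyl sum, with $\mu=w(\hat{\lambda}+\hat{\rho})-\hat{\rho}$, acquires a factor $q^{m(\mu|\alpha_{2}^{\vee}/2)}$, so that the pairing with $D+x_{0}$ is replaced by the pairing with $D+x_{0}-\tfrac{m}{2}\alpha_{2}^{\vee}$; for $m=1,2,-1$ this is precisely the $D+x_{0}-\tfrac{\alpha_{2}^{\vee}}{2}$, $D+x_{0}-\alpha_{2}^{\vee}$, $D+x_{0}+\tfrac{\alpha_{2}^{\vee}}{2}$ occurring in the three cases. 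Collecting the resulting powers of $q$ and $z$ and simplifying them with the explicit values of $\xi^{(s)}_{i,j},\chi^{(s)}_{i,j}$ from Proposition~\ref{2.7} and with $2+k=(p-3)/3$, resp. $(p-4)/4$, yields the first displayed expression in each part.

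To pass from the first to the second displayed expression one unfolds the integral affine Weyl group $\widehat{W}(k\Lambda_{0})\cong W\ltimes t_{qQ^{\vee}}$ ($q$ the denominator of $k+3$), which equals $\widehat{W}(\hat{\lambda}_{a,b})$ because $\lambda_{a,b}\in P_{+}$. Writing $w'=w\,t_{q\eta}$ with $\eta\in Q^{\vee}$ and $\epsilon(w')=\epsilon(w)\epsilon(t_{q\eta})$, and evaluating $t_{q\eta}(\hat{\lambda}_{a,b}+\hat{\rho})$ with the translation formula of Section~\ref{sec:affine_vertex_algebras} (using $(\hat{\lambda}_{a,b}+\hat{\rho})(K)=k+h^{\vee}=p/q$), the $\delta$-component of the translate feeds the pairing with $D$ and produces the terms $q^{q(\eta|\lambda_{a,b}+\rho)+\frac{q}{2}|\eta|^{2}p}$, while the $\h$-component produces the pairing $(w(\lambda_{a,b}+\rho+p\eta)\,|\,\cdot)$; the remaining manipulations are routine.

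The only genuine obstacle is the bookkeeping of the second paragraph: nothing there is conceptually difficult, but carrying the sign, the power of $q$ and the power of $z$ correctly through the substitution $z\mapsto q^{-m}z$, the telescoping of the infinite product, and the re-indexing of the affine Weyl sum all at once is delicate, and a single misplaced $\tfrac12$ or sign propagates straight to the final formula; treating the three values $m=1$, $m=2$ and $m=-1$ separately is the safest way to keep control.
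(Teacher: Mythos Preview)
Your proposal is correct and follows essentially the same route as the paper: identify $\Lxichiij{i,j}{3}\simeq\psi(\Lxichiij{p-i-j,i}{1})$, $\Lxichiij{i,j}{2}\simeq\psi^{2}(\Lxichiij{j,p-i-j}{1})$, and $\Lxichiij{i,j}{2'}\simeq\psi^{-1}(\Lxichiij{j,i}{1'})$, apply Proposition~\ref{psi_character} (iterated), and then telescope the infinite product (the paper records exactly your identities as $A(q,q^{-1}z)=-qz^{-1}A(q,z)$, $A(q,q^{-2}z)=q^{3}z^{-2}A(q,z)$, $A(q,qz)=-zA(q,z)$) and absorb the $q^{m(\mu\mid\alpha_2^\vee/2)}$ factor into the pairing with $D+x_0$. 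Your extra paragraph on unfolding $\widehat{W}(k\Lambda_0)\cong W\ltimes t_{qQ^\vee}$ spells out what the paper leaves implicit in passing from the first to the second displayed expression.
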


\begin{proof}
	If $k$ is principal admissible then the irreducible module $\Lxichiij{i,j}{3}$ is isomorphic to $\psi(\Lxichiij{p-i-j,i}{1})$ whereas $\Lxichiij{i,j}{2}$ is isomorphic to $\psi^2(\Lxichiij{j,p-i-j}{1})$. Hence
	\begin{align*}
		\ch_{\Lxichiij{i,j}{3}}(q,z)&=q^{\frac{2+k}{2}}z^{-(2+k)}\ch_{\Lxichiij{p-i-j,i}{1}}(q,q^{-1}z),
	\end{align*}
	and
	\begin{align*}
		\ch_{\Lxichiij{i,j}{2}}(q,z)&=q^{\frac{2+k}{2}}z^{-(2+k)}\ch_{\Lxichiij{p-i-j,i}{3}}(q,q^{-1}z)\\
		&=q^{2(2+k)}z^{-2(2+k)}\ch_{\Lxichiij{j,p-i-j}{1}}(q,q^{-2}z).
	\end{align*}
	If $k$ is coprincipal admissible then the irreducible module $\Lxichiij{j,i}{1'}$ is isomorphic to $\psi(\Lxichiij{i,j}{2'})$. Hence
	\begin{align*}
		\ch_{\Lxichiij{i,j}{2'}}(q,z)=q^{\frac{2+k}{2}}z^{(2+k)}\ch_{\Lxichiij{j,i}{1'}}(q,qz).
	\end{align*}
	Moreover if $A(q,z)$ denotes the product
	$$\frac{\prod_{n\in\Z_{>0}}(1-q^{n-1}z)^{-1}(1-q^nz^{-1})^{-1}}{\prod_{n\in\Z_{>0}}(1-q^n)^2},$$
	then $A(q,q^{-1}z)=-qz^{-1}A(q,z)$, $A(q,q^{-2}z)=q^3z^{-2}A(q,z)$ and $A(q,qz)=-zA(q,z)$. This remark allows to complete the computation.
\end{proof}

We have already showed that the simple $\W_k(\sp{4},f_{subreg})$-modules with the form $\Lxichiij{i,j}{1}$ (if $k$ is principal admissible) and $\Lxichiij{i,j}{1'}$ (if $k$ is coprincipal admissible) are isomorphic to the zero-th homology of the $H_f$-reduction of certain highest weight $\affg$-modules. 
We expect that this result holds for all simple $\W_k(\sp{4},f_{subreg})$-modules. More precisely we expect that for any simple $\SWA$-module $\Lxichiij{i,j}{s}$, it exists an admissible weight $\lam^{(s)}_{i,j}\in\affh^*$ such that
\begin{equation*}
	H^l_f(\widehat{L}_k(\lam^{(s)}_{i,j}))=
	\left\{\begin{aligned}
		&\Lxichiij{i,j}{s}&&\qquad\text{if } l=0,\\
		&0&&\qquad\text{else}.
	\end{aligned}\right.
\end{equation*}
If such a weight exists then it satisfies the following equation:
\begin{equation}\label{eqweight}
	\left\{
	\begin{aligned}
		\xi^{(s)}_{i,j}&=\Lambda^{(s)}_{i,j}(-\frac{\alpha_2^\vee}{2})~\mod\Z,\\
		\chi^{(s)}_{i,j}&=\frac{(\Lambda^{(s)}_{i,j}|\Lambda^{(s)}_{i,j}+2\rho)}{2(k+h^\vee)}-\Lambda^{(s)}_{i,j}(x_0)~\mod\Z.
	\end{aligned}\right.
\end{equation}
Solving this system for all pairs $(\xi^{(s)}_{i,j},\chi^{(s)}_{i,j})$, we get candidates for the weights $\lam_{i,j}^{(s)}$ which answer to the conjecture. Among all of them we keep only the admissible ones.

For instance, for all $1\leq i\leq p-1$ and $1\leq j\leq p-i-1$, $\lam^{(2)}_{i,j}\in\{\Lambda^{(2)+}_{i,j}, \Lambda^{(2)-}_{i,j}\}$
where $\Lambda^{(2)\pm}_{i,j}=\frac{-6-6i-3j+4p\pm(3j-2p)}{6}\varpi_1+\frac{-3+6i+3j-2p}{3}\varpi_2$.
However the weight $\Lambda^{(2)-}_{i,j}$ is not regular dominant whereas $\Lambda^{(2)+}_{i,j}$ is an admissible weight. As a consequence we set $$\lam^{(2)}_{i,j}=\Lambda^{(2)+}_{i,j}=(-i+p/3-1)\varpi_1+(2i+j-2p/3-1)\varpi_2.$$
Similarly we set
\begin{align*}
	{\lambda}^{(3)}_{i,j}&=(-i+p/3-1)\varpi_1+(i+j-p/3-1)\varpi_2,\\
	{\lambda}^{(2')}_{i,j}&=(-i+p/4-1)\varpi_1+(2i+j-p/2-1)\varpi_2.
\end{align*}

\begin{Proposition}\label{weights}
	\begin{enumerate}[font=\upshape,label=(\alph*)]
		\item Let $k=-3+p/3$ with $\gcd{p}{3}=1$, $p\geq3$, $s=2,3$ and $\xi^{(s)}_{i,j}$ and $\chi^{(s)}_{i,j}$ as in Proposition~\ref{2.7}\ref{2.7P} such that $\Lxichiij{i,j}{s}$ is a simple $\SWA$-module. Then
		\begin{equation}\label{ch2}
			\ch_{H_f(\widehat{L}_k({\lambda}^{(2)}_{i,j}))}(q,z)=\ch_{\Lxichiij{i,j}{2}}(q,z),
		\end{equation}
		and
		\begin{equation}\label{ch3}
			\ch_{H_f(\widehat{L}_k({\lambda}^{(3)}_{i,j}))}(q,z)=\ch_{\Lxichiij{i,j}{3}}(q,z).
		\end{equation}
		\item Let $k=-3+p/4$ with $\gcd{p}{2}=1$, $p\geq4$ and $\xi^{(2')}_{i,j}$ and $\chi^{(2')}_{i,j}$ as in Proposition~\ref{2.7}\ref{2.7P} such that $\Lxichiij{i,j}{2'}$ is a simple $\SWA$-module. Then
		\begin{equation}\label{ch2'}
			\ch_{H_f(\widehat{L}_k({\lambda}^{(2')}_{i,j}))}(q,z)=\ch_{\Lxichiij{i,j}{2'}}(q,z).
		\end{equation}
	\end{enumerate}
\end{Proposition}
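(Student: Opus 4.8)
The plan is to compute the left-hand sides of \eqref{ch2}, \eqref{ch3} and \eqref{ch2'} directly from Theorem~\ref{ThmchHM} and to recognise the result as the expressions already obtained in Corollary~\ref{chtwist}.

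\textbf{Admissibility of the auxiliary weights.} First I would check that $\lambda^{(2)}_{i,j}$, $\lambda^{(3)}_{i,j}$ (when $k=-3+p/3$) and $\lambda^{(2')}_{i,j}$ (when $k=-3+p/4$) are admissible weights of $\affg$ at level $k$, and identify their integral data. Since each of these was produced as a solution of \eqref{eqweight}, one verifies regular dominance directly and exhibits an element $y\in\widetilde W$ — a product of a Weyl reflection and a translation $t_\beta$ with $\beta$ a small multiple of $x_0=\varpi_1$ — such that $y(\widehat\Delta(k\Lambda_0)_+)\subset\widehat\Delta^{re}_+$ and $\widehat\Delta(\hat\lambda^{(s)}_{i,j})=y(\widehat\Delta(k\Lambda_0))$. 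Then $\lambda^{(s)}_{i,j}\in Pr^k$ and $\widehat W(\hat\lambda^{(s)}_{i,j})=y\,\widehat W(k\Lambda_0)\,y^{-1}$; the half-integral coefficient of $\varpi_1$ in $\lambda^{(s)}_{i,j}$ is precisely what makes such a $y$ exist.

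\textbf{Applying the KRW character formula.} Admissibility, together with the Kac--Wakimoto character formula, guarantees that $\ch(\widehat L_k(\lambda^{(s)}_{i,j}))$ extends to a meromorphic function on $\mathcal H\times\h$ with at most simple poles along the hyperplanes $T_\alpha$, $\alpha\in\widehat\Delta^{re}$, so Theorem~\ref{ThmchHM} applies. Specialising $h=-\nu\,\alpha_2^\vee/2$, using $R\,\ch(\widehat L_k(\lambda^{(s)}_{i,j}))=\sum_{w\in\widehat W(\hat\lambda^{(s)}_{i,j})}\epsilon(w)e^{w\circ\hat\lambda^{(s)}_{i,j}}$, and noting that $\alpha_2$ is the unique positive root with $\IP{\alpha_2}{x_0}=0$ (which yields the factor $\prod_{n\in\Z_{>0}}(1-q^{n-1}z)^{-1}(1-q^nz^{-1})^{-1}$), one obtains a closed expression for $\ch_{H_f(\widehat L_k(\lambda^{(s)}_{i,j}))}(q,z)$. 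The substitution $w\mapsto ywy^{-1}$ then turns the Weyl numerator into a sum over $\widehat W(k\Lambda_0)$, and the decomposition $\widehat W(k\Lambda_0)\cong W\ltimes t_{qQ^\vee}$ (with the analogous description in terms of the denominator $q$ in the coprincipal case) rewrites it as a sum over $W\times Q^\vee$ of exactly the shape appearing in Corollary~\ref{chtwist}. The powers of $q$ and $z$ match: the constants $\chi^{(s)}_{i,j}$ and $\xi^{(s)}_{i,j}$ are the lowest $L_0$- and $J_0$-weights computed from \cite[(7.4)]{AvE19} and \cite[(70)]{Ara11} exactly as in the proof of Proposition~\ref{correspondence}, while the conjugation by $y$ reproduces the index shifts $(i,j)\mapsto(j,p-i-j)$, $(i,j)\mapsto(p-i-j,i)$ and $(i,j)\mapsto(j,i)$ that already occur in Corollary~\ref{chtwist}; the overall prefactors are reconciled using the identities $A(q,q^{-1}z)=-qz^{-1}A(q,z)$, $A(q,q^{-2}z)=q^3z^{-2}A(q,z)$ and $A(q,qz)=-zA(q,z)$ recorded there.

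\textbf{Main obstacle.} The genuinely delicate part is the combinatorial dictionary of the first two steps — pinning down the element $y\in\widetilde W$ that conjugates $\widehat W(k\Lambda_0)$ onto $\widehat W(\hat\lambda^{(s)}_{i,j})$ and tracking the resulting shifts both in the Kac--Wakimoto numerator and in the monomial prefactors; everything downstream is a routine, if somewhat lengthy, manipulation. A more conceptual alternative, which I would use at least as a cross-check (and fall back on if the bookkeeping gets unwieldy), is to observe that $H_f$ intertwines the spectral-flow automorphism of $\affg$ attached to $x_0$ with the twist functor $\psi$ on $\SWA$-modules — this is, after all, the origin of $\psi$ — so that $\widehat L_k(\lambda^{(s)}_{i,j})$ is a spectral-flow image of $\widehat L_k(\lambda_{i',j'})$ for a suitable $(i',j')$, whence \eqref{ch2}--\eqref{ch2'} would follow from \eqref{isom_highest_weight} and Proposition~\ref{psi_character}.
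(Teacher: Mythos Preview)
Your proposal is correct and follows essentially the same route as the paper: apply Theorem~\ref{ThmchHM}, conjugate the Weyl sum by the element $y\in\widetilde W$ realising $\widehat W(\hat\lambda^{(s)}_{i,j})=y\,\widehat W(k\Lambda_0)\,y^{-1}$, expand over $W\ltimes t_{qQ^\vee}$, and match against the explicit formulas of Corollary~\ref{chtwist}. Two small remarks: the paper pins down $y$ explicitly (for $s=2$ it is $y=-r_{\alpha_1}r_{\alpha_2}t_{-\varpi_1^\vee}$, not merely ``a reflection times a translation by a multiple of $x_0$''), and the $A(q,\cdot)$ identities you mention are not needed again here---they were already absorbed into the second (expanded) form of the expressions in Corollary~\ref{chtwist}, which is what one compares to directly; also, the $\varpi_1$-coefficient of $\lambda^{(s)}_{i,j}$ is a $1/3$- or $1/4$-integer rather than half-integral.
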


\begin{proof}
	We only detail the proof of \eqref{ch2}. The formula \eqref{ch3} and \eqref{ch2'} are obtained with a similar computation.
	By \cite[Section~3]{KRW03},
	\begin{align*}
		\ch_{H_f(\widehat{L}_k(\lambda^{(2)}_{i,j}))}(q,z)&=\frac{q^{\chi^{(2)}_{i,j}+\frac{j-3}{2}}}{\prod_{n\in\Z_{>0}}(1-q^n)^2}\prod_{n\in\Z_{>0}}(1-q^{n-1}z)^{-1}(1-q^nz^{-1})^{-1}\nonumber\\
		&\qquad\times\sum_{w\in\widehat{W}(\hat{\lambda}^{(2)}_{i,j})}\epsilon(w)q^{-(w(\hat{\lambda}^{(2)}_{i,j}+\hat{\rho})-\hat{\rho}|D+x_0)}z^{-(w(\hat{\lambda}^{(2)}_{i,j}+\hat{\rho})-\hat{\rho}|\frac{\alpha_2^\vee}{2})}.
	\end{align*}
	Moreover $\widehat{\Delta}(\hat{\lambda}^{(2)}_{i,j})=y\left(\widehat{\Delta}(k\Lambda_0)\right)$ where $y=-r_{\alpha_1}\!r_{\alpha_2}\!t_{-\varpi_1^\vee}\in\widetilde{W}$ and $\widehat{W}(\hat{\lambda}^{(2)}_{i,j})=\{ywy^{-1},\,w\in\widehat{W}(k\Lam_0)\}$. Hence
	\begin{align*}
		&\ch_{H_f(\widehat{L}_k(\lambda^{(2)}_{i,j}))}(q,z)=\frac{q^{\chi^{(2)}_{i,j}+\frac{j-3}{2}}}{\prod_{n\in\Z_{>0}}(1-q^n)^2}\prod_{n\in\Z_{>0}}(1-q^{n-1}z)^{-1}(1-q^nz^{-1})^{-1}\nonumber\\
		&\qquad\times\sum_{w\in\widehat{W}(k\Lambda_0)}\epsilon(w)q^{-(y_2wy_2^{-1}(\hat{\lambda}^{(2)}_{i,j}+\hat{\rho})-\hat{\rho}|D+x_0)}z^{-(y_2wy_2^{-1}(\hat{\lambda}^{(2)}_{i,j}+\hat{\rho})-\hat{\rho}|\frac{\alpha_2^\vee}{2})}\\
		&=\frac{q^{\chi^{(2)}_{i,j}+2p-2i-\frac{j}{2}}z^{-\frac{2p}{3}+\frac{1}{2}}}{\prod_{n\in\Z_{>0}}(1-q^n)^2}\prod_{n\in\Z_{>0}}(1-q^{n-1}z)^{-1}(1-q^nz^{-1})^{-1}\\
		&\qquad\times\sum_{w\in W\atop \eta\in Q^\vee}\epsilon(wt_{3\eta})q^{((r_{\alpha_1}r_{\alpha_2}-2\id)w(-(\lambda_{j,p-i-j}+\rho)+p\eta)|\varpi_1)-3(\eta|\lambda_{j,p-i-j}+\rho)+\frac{3}{2}|\eta|^2p}\\
		&\hspace{2cm}\times z^{(r_{\alpha_1}r_{\alpha_2}w(-(\lambda_{j,p-i-j}+\rho)+p\eta)||\frac{\alpha_2^\vee}{2})}\\
		&=\frac{q^{\chi^{(2)}_{i,j}+2p-2i-\frac{j}{2}}z^{-\frac{2p}{3}+\frac{1}{2}}}{\prod_{n\in\Z_{>0}}(1-q^n)^2}\prod_{n\in\Z_{>0}}(1-q^{n-1}z)^{-1}(1-q^nz^{-1})^{-1}\\
		&\qquad\times\sum_{w\in W\atop \eta\in Q^\vee}\epsilon(wt_{-3\eta})q^{-((\id+2r_{\alpha_1}r_{\alpha_2})w(\lambda_{j,p-i-j}+\rho+p\eta)|\varpi_1)+3(\eta|\lambda_{j,p-i-j}+\rho)+\frac{3}{2}|\eta|^2p}\\
		&\hspace{2cm}\times z^{-(w(\lambda_{j,p-i-j}+\rho+p\eta)||\frac{\alpha_2^\vee}{2})}.
	\end{align*}
	Moreover for all $w\in W$, 
	$(-r_{\alpha_1}r_{\alpha_2}w(\lambda_{j,p-i-j}+\rho+p\eta)|\varpi_1)=(w(\lambda_{j,p-i-j}+\rho+p\eta)|\frac{\alpha_2^\vee}{2}).$
	As a consequence,
	\begin{align*}
		&\ch_{H_f(\widehat{L}_k(\lambda^{(2)}_{i,j}))}(q,z)=\frac{q^{\chi^{(2)}_{i,j}+2p-2i+\frac{j}{2}}z^{-\frac{2p}{3}+\frac{1}{2}}}{\prod_{n\in\Z_{>0}}(1-q^n)^2}\prod_{n\in\Z_{>0}}(1-q^{n-1}z)^{-1}(1-q^nz^{-1})^{-1}\\
		&\;\times\sum_{w\in W\atop \eta\in Q^\vee}\epsilon(wt_{3\eta})q^{-(w(\lambda_{j,p-i-j}+\rho+p\eta)|x_0-\alpha_2^\vee)+3(\eta|\lambda_{j,p-i-j}+\rho)+\frac{3|\eta|^2}{2}p}\,z^{-(w(\lambda_{j,p-i-j}+\rho+p\eta)|\frac{\alpha_2^\vee}{2})}\\
		&=\ch_{\Lxichiij{i,j}{2}}(q,z).
	\end{align*}
\end{proof}

We are in a position to formulate a conjecture.

\begin{Conjecture}\label{conjecture}
	Let $k$ be an admissible level. Then, all the simple $\SWA$-modules $\Lxichiij{i,j}{s}$ as in Proposition~\ref{2.7} are obtained from the Drinfeld-Sokolov reduction of a highest weight $\widehat{\g}$-module. More precisely,
	\begin{enumerate}[font=\upshape,label=(\alph*)]
		\item If $k=-3+p/3$ with $\gcd{p}{3}=1$, $p\geq3$, $\xi^{(s)}_{i,j}$ and $\chi^{(s)}_{i,j}$ as in Proposition~\ref{2.7}\ref{2.7P}, then 
		$$H^l_f(\widehat{L}_k(\lam^{(s)}_{i,j}))=
		\left\{\begin{aligned}
			&\Lxichiij{i,j}{s}&&\qquad\text{if } l=0,\\
			&0&&\qquad\text{else}.
		\end{aligned}\right.$$
		\item If $k=-3+p/4$ with $\gcd{p}{2}=1$, $p\geq4$, $\xi^{(s)}_{i,j}$ and $\chi^{(s)}_{i,j}$ as in Proposition~\ref{2.7}\ref{2.7CP}, then 
		$$H^l_f(\widehat{L}_k(\lam^{(s)}_{i,j}))=
		\left\{\begin{aligned}
			&\Lxichiij{i,j}{s}&&\qquad\text{if } l=0,\\
			&0&&\qquad\text{else}.
		\end{aligned}\right.$$
	\end{enumerate}
\end{Conjecture}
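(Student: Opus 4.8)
For the forms $s=1$ (resp.\ $s=1'$) the statement follows from Proposition~\ref{correspondence}, Theorem~\ref{th7.8}, and the vanishing \cite[Theorem~5.5.2]{Ara11} (used in the proof of Proposition~\ref{weights}), since then $\lam^{(1)}_{i,j}=\lam_{i,j}$ lies in $P_{0,+}$; so only the forms $s=2,3$ (resp.\ $s=2'$) remain. The plan has three steps: \textbf{(1)} check that $\lam^{(s)}_{i,j}+k\Lambda_0$ is admissible and lies in $Pr^k$, so that $\widehat{L}_k(\lam^{(s)}_{i,j})$ is an $L_k(\g)$-module and $H^\bullet_f(\widehat{L}_k(\lam^{(s)}_{i,j}))$ is a complex of $\SWA$-modules; \textbf{(2)} prove the concentration $H^l_f(\widehat{L}_k(\lam^{(s)}_{i,j}))=0$ for $l\neq 0$; \textbf{(3)} identify $H^0_f(\widehat{L}_k(\lam^{(s)}_{i,j}))$ with $\Lxichiij{i,j}{s}$ using Proposition~\ref{weights} and rationality.

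Step \textbf{(1)} is a direct verification: on the ranges of $i,j$ allowed in Proposition~\ref{2.7}, the weight $\lam^{(s)}_{i,j}+k\Lambda_0$ is regular dominant, and the argument in the proof of Proposition~\ref{weights} already produces $y\in\widetilde{W}$ with $\widehat{\Delta}(\lam^{(s)}_{i,j}+k\Lambda_0)=y(\widehat{\Delta}(k\Lambda_0))$, whence $\lam^{(s)}_{i,j}+k\Lambda_0\in Pr^k$. Then $\widehat{L}_k(\lam^{(s)}_{i,j})$ is an $L_k(\g)$-module with associated variety $\overline{\O_q}$ \cite{Ara15a}; since $f\in\O_q$, the associated variety of $H^0_f(\widehat{L}_k(\lam^{(s)}_{i,j}))$ equals $\overline{\O_q}\cap\mathscr{S}_f=\{f\}$, so this reduction is nonzero.

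Step \textbf{(2)} is the crux, and the obstruction is precisely that $\lam^{(s)}_{i,j}\notin P_{0,+}$ --- its $\alpha_2^\vee$-value is not an integer --- so \cite[Theorem~5.5.2]{Ara11} does not apply verbatim. One route: $M\mapsto H^\bullet_f(M)$ is the cohomology of $M\otimes\mathcal{F}(\g_{>0})$ for the fixed differential $d_{(0)}$, hence an exact cohomological $\delta$-functor; since Verma $\affg$-modules reduce in cohomological degree $0$, a BGG-type resolution of $\widehat{L}_k(\lam^{(s)}_{i,j})$ by Verma modules together with the associated hypercohomology spectral sequence already forces $H^l_f(\widehat{L}_k(\lam^{(s)}_{i,j}))=0$ for $l>0$, and the dual resolution by dual Verma modules forces $H^l_f=0$ for $l<0$ provided dual Verma modules also reduce in degree $0$. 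A second route: the $\psi$-twist of Section~\ref{sec:twist} lifts to the spectral-flow operation on $L_k(\g)$-modules along $h_2\in\g^f\cap\h$ and intertwines with $H^\bullet_f$; since each $\Lxichiij{i,j}{s}$ with $s\in\{2,3\}$ (resp.\ $s=2'$) is $\psi$ or $\psi^2$ of a module of the first form and Propositions~\ref{psi_character} and~\ref{weights} match the reduction characters, the degree-$0$ concentration transports from the $s=1$ case. I expect this homological concentration to be the genuine difficulty: the first route depends on the behaviour of dual Verma modules under $H^\bullet_f$, the second on matching the spectral-flowed module with the honest highest weight module $\widehat{L}_k(\lam^{(s)}_{i,j})$ at the level of reduction.

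Granting \textbf{(2)}, the nonzero module $H^0_f(\widehat{L}_k(\lam^{(s)}_{i,j}))$ has Euler-Poincar\'e character equal to its ordinary character, which by Proposition~\ref{weights} equals $\ch_{\Lxichiij{i,j}{s}}$. By the Main Theorem $\SWA$ is rational, so $H^0_f(\widehat{L}_k(\lam^{(s)}_{i,j}))$ is a finite direct sum of simple $\SWA$-modules; since the graded characters of the simple $\SWA$-modules of Proposition~\ref{2.7} are linearly independent (their highest weights being pairwise distinct), the character equality forces $H^0_f(\widehat{L}_k(\lam^{(s)}_{i,j}))\cong\Lxichiij{i,j}{s}$. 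This establishes the Conjecture for the remaining forms, hence in all cases.
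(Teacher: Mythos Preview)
This statement is labelled \emph{Conjecture} in the paper and is left unproved there; the author only establishes the supporting evidence (Proposition~\ref{weights}, the character identities) and the $s=1,1'$ cases via Theorem~\ref{th7.8} and \cite[Theorem~5.5.2]{Ara11}. So there is no paper proof to compare against: your proposal is an attempt to \emph{settle} an open conjecture.

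Your outline is sensible and you have correctly isolated the obstruction. Steps \textbf{(1)} and \textbf{(3)} are essentially fine (modulo a small circularity in \textbf{(1)}: the associated-variety argument bounds $X_{H^0_f(\cdot)}$ but does not by itself give nonvanishing --- that comes only after \textbf{(2)} from the nonzero Euler--Poincar\'e character). The genuine gap is exactly your Step \textbf{(2)}, and neither route you propose closes it:
\begin{itemize}
\item \emph{Route 1.} Even granting acyclicity of Verma modules for the ``$+$'' reduction, the BGG resolution only controls one half of the grading; the other half needs acyclicity of \emph{dual} Verma modules under $H^\bullet_f$, which is not available. This asymmetry between Verma and dual Verma modules under the ``$+$'' reduction is precisely why Arakawa passes to the ``$-$'' functor $H^0_{f,-}$ in \cite{Ara11} to get exactness on category~$\mathcal{O}_k$, and why Theorem~\ref{th7.8} requires $\lambda\in P_{0,+}$.
\item \emph{Route 2.} The $\psi$-twist does lift to spectral flow on $L_k(\g)$-modules and intertwines with reduction, but the spectral-flowed module $\sigma(\widehat{L}_k(\lambda_{i,j}))$ is \emph{not} a highest weight module: it lives in a twisted category, not $\mathcal{O}_k$. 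Thus knowing $H^\bullet_f(\sigma(\widehat{L}_k(\lambda_{i,j})))$ is concentrated in degree $0$ tells you nothing directly about $H^\bullet_f(\widehat{L}_k(\lambda^{(s)}_{i,j}))$, which is a different complex. Proposition~\ref{weights} matches only the Euler--Poincar\'e characters, which is insensitive to this distinction.
\end{itemize}
You yourself flag both of these as ``the genuine difficulty,'' and that assessment is accurate: what remains is not a routine verification but the substantive reason the author stated this as a conjecture rather than a theorem.
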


\newcommand{\etalchar}[1]{$^{#1}$}

\end{document}